\theoremstyle{plain}
\newtheorem{corollary}{Corollary}
\newtheorem{lemma}{Lemma}
\newtheorem{proposition}{Proposition}
\newtheorem{theorem}{Theorem}
\numberwithin{equation}{section}
\begin{document}
\title[Elliptic Singular Fourth Order Equations]{Elliptic Singular Fourth
Order Equations}
\author{Mohammed Benalili}
\address{Faculty of Sciences, Dept. Math. University Aboubakr BelKa\"{\i}d.
Tlemcen Algeria.}
\email{m\_benalili@mail.univ-tlemcen.dz}
\author{Kamel Tahri}
\curraddr{Faculty of Sciences, Dept. Math. University Aboubakr Belka\"{\i}d.
Tlemcen. Algeria}
\subjclass[2000]{Primary 58J05.}
\keywords{Fourth order elliptic equation, Hardy-Sobolev inequality, Critical
Sobolev exponent.}

\begin{abstract}
Using a method developped in [1] and [2], we prove the existence of weak non
trivial solutions to fourth order elliptic equations with singularities and
with critical Sobolev growth.
\end{abstract}

\maketitle

\section{Introduction}

Fourth order elliptic equations have been widely studied these last years
because of their importance in the analysis on manifolds particularly those
involving the Paneitz - Branson operators. Many works have been devoted to
this subject ( see \cite{1}, \cite{2}, \cite{3}, \cite{4},\cite{5}, \cite{6}%
, \cite{7}, \cite{8}, \cite{9} \cite{10}, \cite{13} and \cite{16} ).
Different techniques have been used for the resolution of the fourth order
equations as example the variational method which was developed by Yamabe to
solve the problem of the prescribed scalar curvature. Let $(M,g)$ a compact
smooth Riemannian of dimension $n\geq 5$ with a metric $g$. We denote by $%
H_{2}^{2}(M)$ the standard Sobolev space which is the completed of the space 
$C^{\infty }\left( M\right) $ with respect to the norm 
\begin{equation*}
\left\Vert \varphi \right\Vert _{2,2}=\dsum\limits_{k=0}^{k=2}\left\Vert
\nabla ^{k}\varphi \right\Vert _{2}\text{.}
\end{equation*}%
$H_{2}^{2}(M)$ will be endowed with the suitable equivalent norm

\begin{equation*}
\left\Vert u\right\Vert _{H_{2}^{2}(M)}=(\int_{M}\left( \left( \Delta
_{g}u\right) ^{2}+\left\vert \nabla _{g}u\right\vert ^{2}+u^{2}\right)
dv_{g})^{\frac{1}{2}}\text{.}
\end{equation*}%
In 1979, \cite{17}, M. Vaugon has proved the existence of real $\lambda >0$
and a non trivial solution $u\in C^{4}\left( M\right) $ to the equation%
\begin{equation*}
\Delta _{g}^{2}u-div_{g}\left( a(x)\nabla _{g}u\right) +b(x)u=\lambda f(t,x)
\end{equation*}%
where $a$, $b$ are smooth functions on $M$ and $f(t,x)$ is odd and
increasing function in $t$ fulfilling the inequality 
\begin{equation*}
\left\vert f(t,x)\right\vert <a+b\left\vert t\right\vert ^{\frac{n+4}{n-4}}%
\text{.}
\end{equation*}%
D.E. Edminds, D. Fortunato and E. Jannelli \cite{14} have shown that the
only solutions in $R^{n}$ to the equation 
\begin{equation*}
\Delta ^{2}u=u^{\frac{n+4}{n-4}}
\end{equation*}%
are positive, symmetric, radial and decreasing functions of the form%
\begin{equation*}
u_{\epsilon }(x)=\frac{\left( (n-4)n(n^{2}-4)\epsilon ^{4}\right) ^{\frac{n-4%
}{8}}}{(r^{2}+\epsilon ^{2})^{\frac{n-4}{2}}}\text{.}
\end{equation*}%
In 1995, \cite{15} Van Der Vorst obtains the same results as D.E. Edminds,
D. Fortunato and E. Jannelli to the following problem 
\begin{equation*}
\left\{ 
\begin{array}{c}
\Delta ^{2}u-\lambda u=u\left\vert u\right\vert ^{\frac{8}{n-4}}\text{ in }%
\Omega , \\ 
\Delta u=u=0\text{ on }\partial \Omega ,%
\end{array}%
\right.
\end{equation*}%
where $\Omega $ is a bounded domain of $R^{n}$.\newline
In 1996, \cite{9} F. Bernis, J. Garcia-Azorero and I.Peral have obtained the
existence at least of two positive solution to the following problem%
\begin{equation*}
\left\{ 
\begin{array}{c}
\Delta ^{2}u-\lambda u\left\vert u\right\vert ^{q-2}=u\left\vert
u\right\vert ^{\frac{8}{n-4}}\text{ in }\Omega , \\ 
\Delta u=u=0\text{ on }\partial \Omega ,%
\end{array}%
\right.
\end{equation*}%
where $\Omega $ is bounded domain of $R^{n}$,$1<q<2$ and $\ \lambda >0$ in
some interval. In 2001, \cite{12}, D. Caraffa has obtained the existence of
a non trivial solution of class $C^{4,\alpha }$, $\ \alpha \in \left(
0,1\right) $ to the following equation 
\begin{equation*}
\Delta _{g}^{2}u-\nabla ^{\alpha }\left( a(x)\nabla _{\alpha }u\right)
+b(x)u=\lambda f(x)\left\vert u\right\vert ^{N-2}u
\end{equation*}%
with $\lambda >0$, first for $f$ a constant and next for a positive function
\ $f$ on $M$.

Recently the first author \cite{4}, has shown the existence of at least two
distinct non trivial solutions in the subcritical case and a non trivial
solution in the critical case to the following equation%
\begin{equation*}
\Delta _{g}^{2}u-\nabla ^{\alpha }\left( a(x)\nabla _{\alpha }u\right)
+b(x)u=f(x)\left\vert u\right\vert ^{N-2}u
\end{equation*}%
where $f$ is a changing sign smooth function and $a$ and $b$ are smooth
functions. In \cite{6} the same author proved the existence of at least two
non trivial solutions to%
\begin{equation*}
\Delta _{g}^{2}u-\nabla ^{\alpha }\left( a(x)\nabla _{\alpha }u\right)
+b(x)u=f(x)\left\vert u\right\vert ^{N-2}u+\left\vert u\right\vert
^{q-2}u+\varepsilon g(x)
\end{equation*}%
where $a$, $b$, $f$, $g$ are smooth functions on $M$ with $f>0$, $2<q<N$, $%
\lambda >0$ and $\epsilon >0$ small enough. Let $S_{g}$ denote the scalar
curvature of $M$. In 2010, \cite{8}, \ the authors proved the following
result

\begin{theorem}
Let $(M,g)$ be a compact Riemannian manifold of dimension $n\geq 6$ and $a$, 
$b$, $f$ \ smooth functions on $M$, $\lambda \in \left( 0,\lambda _{\ast
}\right) $, $1<q<2$ \ \ such that\newline
1) $f(x)>0$ on $M$.\newline
2) At the point $x_{\circ }$ where $f$ attains its maximum, we suppose, for $%
n=6$ 
\begin{equation*}
S_{g}(x_{\circ })+3a(x_{\circ })>0
\end{equation*}%
and for $n>6$ 
\begin{equation*}
\left( \frac{\left( n^{2}+4n-20\right) }{2(n+2)(n-6)}S_{g}(x_{\circ })+\frac{%
(n-1)}{(n+2)(n-6)}a(x_{\circ })-\frac{1}{8}\frac{\Delta f(x_{\circ })}{%
f(x_{\circ })}\right) >0\text{ .}
\end{equation*}%
Then the equation 
\begin{equation*}
\Delta _{g}^{2}u+div_{g}\left( a(x)\nabla _{g}u\right) +b(x)u=\lambda
\left\vert u\right\vert ^{q-2}u+f(x)\left\vert u\right\vert ^{N-2}u
\end{equation*}%
admits a non trivial solution of class $C^{4,\alpha }\left( M\right) $, $%
\alpha \in (0,1)$.
\end{theorem}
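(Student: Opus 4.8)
\medskip
\noindent\textbf{Proof plan.}
The plan is to treat the equation variationally in $H_2^2(M)$, with critical exponent $N=\frac{2n}{n-4}$. Writing $Q(u)=\int_M\big((\Delta_gu)^2-a|\nabla_gu|^2+bu^2\big)\,dv_g$ — integration by parts turns $\int_Mu\,\mathrm{div}_g(a\nabla_gu)$ into $-\int_Ma|\nabla_gu|^2$ — solutions are exactly the critical points of
\[
I_\lambda(u)=\tfrac12Q(u)-\tfrac{\lambda}{q}\int_M|u|^q\,dv_g-\tfrac1N\int_Mf\,|u|^N\,dv_g .
\]
First I would record the standing coercivity $Q(u)\ge\alpha\|u\|^2_{H_2^2}$ for some $\alpha>0$ (this is what the assumptions on $a,b$ and the constraint $\lambda<\lambda_\ast$ are there to guarantee), so that $\sqrt Q$ is an equivalent norm. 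Let $K(n,2)$ be the optimal constant in $\|\varphi\|^2_{L^N(\mathbb R^n)}\le K(n,2)\,\|\Delta\varphi\|^2_{L^2(\mathbb R^n)}$ and put $c^\ast=\tfrac2n\,K(n,2)^{-n/4}(\sup_Mf)^{-(n-4)/4}$. The single obstruction is the non‑compactness of $H_2^2(M)\hookrightarrow L^N(M)$, which is lost precisely at energy levels $\ge c^\ast$ through bubbling on the Edmunds–Fortunato–Jannelli profiles.

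\medskip
The key lemma I would isolate is a Struwe‑type global compactness statement: every Palais–Smale sequence for $I_\lambda$ at a level $c<c^\ast$ is relatively compact in $H_2^2(M)$, hence converges to a critical point of energy $c$. This is proved by a profile decomposition adapted to $\Delta_g^2$: a non‑convergent $(PS)_c$ sequence must split off at least one rescaled EFJ bubble, each carrying energy $\ge c^\ast$, contradicting $c<c^\ast$; here one also uses that $u\mapsto\int_M|u|^q$ is weakly continuous on $H_2^2(M)$ since $q<N$.

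\medskip
To produce a critical point below $c^\ast$, two routes are available, and I would combine their ingredients. (i) Since $1<q<2$, the concave term makes $I_\lambda$ strictly negative on every punctured ray from the origin; fixing a radius $R>0$ independent of $\lambda$ with $\inf_{\partial B_R}I_\lambda>0$ while $c_\lambda:=\inf_{\overline{B_R}}I_\lambda<0$, Ekeland's variational principle supplies a $(PS)_{c_\lambda}$ sequence inside $B_R$, and $c_\lambda<0<c^\ast$ then forces convergence to a nontrivial critical point of negative energy. (ii) Hypothesis 2) is what the min–max construction of the method of \cite{1},\cite{2} requires: one tests with truncated instantons
\[
u_\varepsilon(x)=\eta(x)\Big(\tfrac{\varepsilon}{\varepsilon^2+|x|^2}\Big)^{\frac{n-4}{2}}
\]
centred at the point $x_\circ$ where $f$ is maximal, expands $\int_M(\Delta_gu_\varepsilon)^2$, $\int_Ma|\nabla_gu_\varepsilon|^2$, $\int_Mfu_\varepsilon^N$ and $\int_Mu_\varepsilon^q$ in powers of $\varepsilon$, and optimizes $\sup_{t\ge0}I_\lambda(tu_\varepsilon)$; the expansion has the form $c^\ast-C\varepsilon^{\theta}\mathcal E(x_\circ)+o(\varepsilon^{\theta})$ with $C>0$, where $\mathcal E(x_\circ)$ is exactly the quantity in 2) and the exponent $\theta$ depends on $n$ — the value $n=6$ being precisely where $\int_Ma|\nabla_gu_\varepsilon|^2$ acquires a $|\log\varepsilon|$ factor, which is why that case is stated apart. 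Under 2) the min–max level is then $<c^\ast$, and the key lemma upgrades the corresponding Palais–Smale sequence to a genuine nontrivial solution.

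\medskip
Finally, a weak solution $u\in H_2^2(M)$ satisfies a linear fourth‑order equation whose right‑hand side lies in $L^{N/(N-1)}(M)$; iterating the $L^p$ theory for $\Delta_g^2$ together with the Sobolev embeddings, and then Schauder estimates (using that $t\mapsto|t|^{N-2}t$ and $t\mapsto|t|^{q-2}t$ are Hölder continuous), bootstraps $u$ up to $C^{4,\alpha}(M)$. I expect the principal obstacle to be the test‑function estimate in step (ii): showing that the min–max energy drops below $c^\ast$ is delicate and dimension‑sensitive, and it is exactly what forces the curvature and Laplacian conditions of hypothesis 2).
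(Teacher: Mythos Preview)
Your outline is correct and would prove the theorem, but the variational framework differs from the one the paper (following \cite{8}, where this theorem is actually established) employs. The paper does not run a mountain pass or minimize on a small ball; it works on the constraint set $M_\lambda=\{u:\langle J_\lambda'(u),u\rangle=0,\ \|u\|\ge\tau\}$, shows $J_\lambda$ is bounded below and strictly positive there for $\lambda<\lambda_\ast$, extracts a Palais--Smale sequence on $M_\lambda$ via Ekeland's principle, and verifies compactness below $c^\ast$ by a direct Brezis--Lieb splitting rather than a Struwe profile decomposition (the positivity $J_\lambda(u)>0$ on $M_\lambda$ is precisely what closes the energy inequality there). The decisive test-function step is the same in both approaches: one expands $\sup_{t>0}J_\lambda(tu_\varepsilon)$ for the truncated instanton $u_\varepsilon$ centred at $x_\circ$, and hypothesis 2) is exactly the sign condition forcing this supremum strictly below $c^\ast$, with the $\varepsilon^2\log(1/\varepsilon)$ correction when $n=6$. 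Your route (i) is a genuine alternative the paper does not exploit: it produces a negative-energy solution without invoking hypothesis 2) at all, at the cost of a possibly smaller admissible range of $\lambda$. Two small points to tighten: coercivity of $Q$ is a standing hypothesis on the operator $P_g$ itself and is not manufactured by the constraint $\lambda<\lambda_\ast$; and your route (ii) needs the barrier $\inf_{\partial B_R}I_\lambda>0$ from route (i) to set up the mountain-pass geometry, since the concave term $-\tfrac{\lambda}{q}\int|u|^q$ makes $I_\lambda$ negative on every punctured neighbourhood of the origin and rules out $0$ as a base point.
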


Recently, F. Madani \cite{14}, has considered the Yamabe problem with
singularities which he solved under some geometric conditions. The first
author in \cite{7} considered fourth order elliptic equation with
singularities of the form%
\begin{equation}
\Delta ^{2}u-\nabla ^{i}\left( a(x)\nabla _{i}u\right) +b(x)u=f\left\vert
u\right\vert ^{N-2}u  \tag{1}  \label{1}
\end{equation}%
where the functions $a$ and $b$ are in $L^{s}(M)$, $s>\frac{n}{2}$ and in $%
L^{p}(M)$, $p>\frac{n}{4}$ respectively, $N=\frac{2n}{n-4}$ is the Sobolev
critical exponent in the embedding $H_{2}^{2}\left( M\right) \hookrightarrow
L^{N}\left( M\right) $. He established the following result. Let $\left(
M,g\right) $ be a compact $n$-dimensional Riemannian manifold, $n\geq 6$, $%
a\in L^{s}(M)$, $b\in L^{p}(M)$, with $s>\frac{n}{2}$, $p>\frac{n}{4}$, $f$ $%
\in C^{\infty }(M)$ a positive function and $x_{o}\in M$ such that $%
f(x_{o})=\max_{x\in M}f(x)$.

\begin{theorem}
For $n\geq 10$, or $n=8,9$ and $2<p<5$, $\frac{9}{4}<s<11$or $n=7$, $\frac{7%
}{2}<s<9$ and $\frac{7}{4}<p<9$ we suppose that 
\begin{equation*}
\frac{n^{2}+4n-20}{6\left( n-6\right) (n^{2}-4)}S_{g}\left( x_{o}\right) -%
\frac{n-4}{2n\left( n-2\right) }\frac{\Delta f(x_{o})}{f(x_{o})}>0\text{.}
\end{equation*}%
For $n=6$ and $\frac{3}{2}<p<2$, $3<s<4$, we suppose that 
\begin{equation*}
S_{g}(x_{o})>0\text{.}
\end{equation*}%
Then the equation (\ref{1}) has a non trivial weak solution $u$ in $%
H_{2}^{2}\left( M\right) $. Moreover if $a\in H_{1}^{s}\left( M\right) $,
then $u\in $ $C^{0,\beta }\left( M\right) $, for some $\beta \in \left( 0,1-%
\frac{n}{4p}\right) $.
\end{theorem}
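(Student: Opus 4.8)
The plan is to follow the variational scheme of \cite{1} and \cite{2} and to produce $u$ as a mountain-pass critical point. Set $N=\frac{2n}{n-4}$ and work in $H_{2}^{2}(M)$, on which one considers the quadratic form
\[ Q(u)=\int_{M}\big((\Delta _{g}u)^{2}+a\,|\nabla _{g}u|^{2}+b\,u^{2}\big)\,dv_{g}, \]
and the energy functional $I(u)=\tfrac{1}{2}Q(u)-\tfrac{1}{N}\int_{M}f\,|u|^{N}\,dv_{g}$. Since $a\in L^{s}$ with $s>\tfrac{n}{2}$ and $b\in L^{p}$ with $p>\tfrac{n}{4}$, H\"older's inequality together with the embeddings $H_{2}^{2}(M)\hookrightarrow H_{1}^{2n/(n-2)}(M)\hookrightarrow L^{N}(M)$ shows $I\in C^{1}(H_{2}^{2}(M),\mathbb{R})$; moreover, under the coercivity of the operator in (\ref{1}) inherited from \cite{7}, $Q$ is equivalent to $\|\cdot\|_{H_{2}^{2}(M)}^{2}$. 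First I would verify the mountain-pass geometry: $I(0)=0$; for $\|u\|$ small, $I(u)\geq \alpha \|u\|^{2}-C\|u\|^{N}\geq \delta >0$ since $N>2$; and since $f(x_{o})=\max f>0$ there is $v$ with $\int_{M}f|v|^{N}>0$, whence $I(tv)\to -\infty$. Thus $c=\inf_{\gamma \in \Gamma }\max_{t\in [0,1]}I(\gamma (t))>0$, with $\Gamma$ the usual set of paths from $0$ to a point where $I<0$.

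Next I would analyze the Palais--Smale condition. A $(PS)_{c}$ sequence $(u_{m})$ is bounded (estimate $I(u_{m})-\tfrac{1}{N}\langle I'(u_{m}),u_{m}\rangle$ and use coercivity of $Q$), so up to a subsequence $u_{m}\rightharpoonup u$ with $I'(u)=0$. Writing $u_{m}=u+w_{m}$ with $w_{m}\rightharpoonup 0$: because $s>\tfrac{n}{2}$ and $p>\tfrac{n}{4}$, the compact embeddings give $\int_{M}a|\nabla _{g}w_{m}|^{2}\to 0$ and $\int_{M}b\,w_{m}^{2}\to 0$, so the only obstruction to strong convergence is concentration of $\int_{M}(\Delta _{g}w_{m})^{2}$ against $\int_{M}f|w_{m}|^{N}$, which by the Struwe/Lions decomposition forces
\[ c\geq c^{\ast }:=\frac{2}{n}\,K_{0}^{-n/4}\big(\max_{M}f\big)^{-(n-4)/4} \]
for a non-compact sequence, $K_{0}$ being the best constant in $\|u\|_{L^{N}}^{2}\leq K_{0}\|\Delta _{g}u\|_{L^{2}}^{2}+(\text{lower order})$. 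Hence $(PS)_{c}$ holds as soon as $c<c^{\ast }$.

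The crux is therefore to show $c<c^{\ast }$, and this is where the arithmetic conditions on $n$, $p$, $s$ enter. I would test $I$ on truncated Edmunds--Fortunato--Jannelli bubbles
\[ u_{\varepsilon }=\eta (r)\,\big((n-4)n(n^{2}-4)\varepsilon ^{4}\big)^{(n-4)/8}(r^{2}+\varepsilon ^{2})^{-(n-4)/2}, \]
centered at $x_{o}$, with $r=d_{g}(x_{o},\cdot)$ and $\eta$ a cutoff, and estimate $\sup_{t\geq 0}I(tu_{\varepsilon })$ as $\varepsilon \to 0$. Using conformal normal coordinates near $x_{o}$ (so the metric expansion produces $S_{g}(x_{o})$) and Taylor expanding $f$ about its maximum (feeding $\tfrac{\Delta f(x_{o})}{f(x_{o})}$ into $\int_{M}f|u_{\varepsilon }|^{N}$), one obtains, for $n\geq 10$,
\[ \sup_{t\geq 0}I(tu_{\varepsilon })=c^{\ast }-C_{n}\left( \frac{n^{2}+4n-20}{6(n-6)(n^{2}-4)}S_{g}(x_{o})-\frac{n-4}{2n(n-2)}\frac{\Delta f(x_{o})}{f(x_{o})}\right) \varepsilon ^{2}+o(\varepsilon ^{2}), \]
with $C_{n}>0$, the analogous expansions carrying $\log (1/\varepsilon )$ weights in the borderline dimensions $n=7,8,9$, while for $n=6$ the decisive term is a positive multiple of $S_{g}(x_{o})$. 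The ranges prescribed for $p$ and $s$ in the low dimensions are exactly what makes the singular contributions $\int_{M}a|\nabla _{g}u_{\varepsilon }|^{2}$ and $\int_{M}b\,u_{\varepsilon }^{2}$ a genuine $o(\cdot )$ of the decisive term; in high dimensions these are automatically negligible. Under the stated hypotheses the bracketed coefficient is positive, so $c\leq \sup_{t\geq 0}I(tu_{\varepsilon })<c^{\ast }$ for $\varepsilon$ small.

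Finally, by the mountain-pass theorem there is a $(PS)_{c}$ sequence; since $c\in (0,c^{\ast })$ it converges strongly to a critical point $u\in H_{2}^{2}(M)$ of $I$, which solves (\ref{1}) weakly, and $I(u)=c>0=I(0)$ forces $u\not\equiv 0$. For the regularity addendum, assuming $a\in H_{1}^{s}(M)$ I would rewrite (\ref{1}) as $\Delta _{g}^{2}u=\nabla ^{i}(a\nabla _{i}u)-bu+f|u|^{N-2}u$ and run a Brezis--Kato/Moser iteration: starting from $u\in L^{N}$, the hypotheses on $a$ and $b$ push the right-hand side into successively higher $L^{q}$ spaces, giving $u\in L^{q}$ for all $q<\infty$, then $\Delta _{g}^{2}u\in L^{q}$ for every $q<p$, and fourth-order $L^{q}$ elliptic estimates followed by Sobolev embedding give $u\in C^{0,\beta }(M)$ for any $\beta <1-\tfrac{n}{4p}$, the loss coming from $b\in L^{p}$. \emph{The main obstacle} is the asymptotic expansion of the preceding paragraph: controlling $\int_{M}a|\nabla _{g}u_{\varepsilon }|^{2}$ and $\int_{M}b\,u_{\varepsilon }^{2}$ with only $a\in L^{s}$, $b\in L^{p}$, and determining in each dimension which term dictates the sign — precisely the source of all the dimension- and exponent-restrictions appearing in the statement.
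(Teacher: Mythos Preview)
The paper does not actually prove this statement. Theorem~2 is quoted from the reference~\cite{7} as prior work of the first author (``He established the following result''); the present paper is concerned with equation~(\ref{2}), which carries the additional subcritical term $\lambda |u|^{q-2}u$, and its proofs (Lemmas~\ref{lem}--\ref{lem4}, Theorems~\ref{thm3} and~\ref{thm5}) are tailored to that perturbed problem via minimization on the Nehari-type set $M_{\lambda}$. So there is no proof in this paper against which to compare your proposal.

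That said, your outline is the natural and essentially correct route for equation~(\ref{1}): mountain-pass geometry, compactness below the threshold $c^{\ast}=\tfrac{2}{n}K_{0}^{-n/4}(\max f)^{-(n-4)/4}$, and test-function expansions with the Edmunds--Fortunato--Jannelli bubbles to force $c<c^{\ast}$. Your identification of the role of the $(p,s)$ ranges---making $\int_{M}a|\nabla u_{\varepsilon}|^{2}$ and $\int_{M}b\,u_{\varepsilon}^{2}$ lower order than the decisive $\varepsilon^{2}$ (or $\varepsilon^{2}\log(1/\varepsilon)$) term---is exactly right, and matches the spirit of the computations in Sections~7--8 here (done for equation~(\ref{2})). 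Note, however, that your variational method differs from the one this paper employs: you use the unconstrained mountain-pass theorem, whereas the present paper, because of the concave perturbation $\lambda|u|^{q-2}u$ with $1<q<2$, works on the constraint set $M_{\lambda}$ and invokes Ekeland's principle. For equation~(\ref{1}) without that term, your mountain-pass approach is the standard one and there is no reason to pass to a constraint.
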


In this paper, we are concerned with the following problem: let $(M,g)$ be a
Riemannian compact manifold of dimension $n\geq 5$. Let $a\in L^{r}(M)$, $%
b\in L^{s}(M)$ where \ $r>\frac{n}{2}$, $s>\frac{n}{4}$ and $f$ \ a positive 
$C^{\infty }$-function on $M$; we look for non trivial solution of the
equation 
\begin{equation}
\Delta _{g}^{2}u+div_{g}\left( a(x)\nabla _{g}u\right) +b(x)u=\lambda
\left\vert u\right\vert ^{q-2}u+f(x)\left\vert u\right\vert ^{N-2}u  \tag{2}
\label{2}
\end{equation}%
where $1<q<2$ and $N=\frac{2n}{n-4}$ is the critical Sobolev exponent and $%
\lambda >0$ a real number. Our main result states as follows

\begin{theorem}
Let\ $(M,g)$ be a compact Riemannian manifold of dimension $n\geq 6$ and $f$
a positive function. Suppose that $P_{g}$ is coercive and at a point $x_{o}$
where $f$ attains its maximum the following conditions

\begin{equation}
\left\{ 
\begin{array}{c}
\frac{\Delta f(x_{o})}{f\left( x_{o}\right) }<\left( \frac{n\left(
n^{2}+4n-20\right) }{3\left( n+2\right) \left( n-4\right) \left( n-6\right) }%
\frac{1}{\left( 1+\left\Vert a\right\Vert _{r}+\left\Vert b\right\Vert
_{s}\right) ^{\frac{4}{n}}}-\frac{n-2}{3\left( n-1\right) }\right)
S_{g}\left( x_{o}\right) \text{ in case }n>6 \\ 
S_{g}(x_{\circ })>0\text{ \ in case }n=6\text{.}%
\end{array}%
\right.  \tag{C}  \label{C}
\end{equation}%
are true.

Then there is $\lambda ^{\ast }>0$ such that for any $\lambda \in (0,$ $%
\lambda ^{\ast })$, the equation (\ref{2}) has a non trivial weak solution.
\end{theorem}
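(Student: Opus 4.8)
The plan is to treat $(\ref{2})$ variationally and to produce a nontrivial critical point of the associated energy at a mountain--pass level, after establishing a Palais--Smale condition below an explicit energy threshold; the hypothesis $(\ref{C})$ will be needed only at the very last step, to push the mountain--pass level strictly below that threshold by means of truncated extremal functions.

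\emph{Variational setup and mountain--pass geometry.} To $P_{g}$ attach the bilinear form $\langle u,v\rangle_{P}=\int_{M}(\Delta_{g}u\,\Delta_{g}v-a\,\nabla_{g}u\cdot\nabla_{g}v+b\,uv)\,dv_{g}$, which is continuous on $H_{2}^{2}(M)$ because $a\in L^{r}(M)$ with $r>\frac{n}{2}$ and $b\in L^{s}(M)$ with $s>\frac{n}{4}$, and which satisfies $\langle u,u\rangle_{P}\ge\Lambda\|u\|_{H_{2}^{2}(M)}^{2}$ for some $\Lambda>0$ by the assumed coercivity of $P_{g}$. Weak solutions of $(\ref{2})$ are the critical points of the $C^{1}$ functional
\begin{equation*}
I_{\lambda}(u)=\frac{1}{2}\langle u,u\rangle_{P}-\frac{\lambda}{q}\int_{M}|u|^{q}\,dv_{g}-\frac{1}{N}\int_{M}f\,|u|^{N}\,dv_{g}.
\end{equation*}
Coercivity together with the Sobolev embeddings $H_{2}^{2}(M)\hookrightarrow L^{N}(M)$ and $H_{2}^{2}(M)\hookrightarrow L^{q}(M)$ gives, on the sphere $\|u\|_{H_{2}^{2}(M)}=\rho$, the bound $I_{\lambda}(u)\ge\frac{\Lambda}{2}\rho^{2}-C_{1}\rho^{N}-\frac{\lambda}{q}C_{2}\rho^{q}$. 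Since $q<2<N$, I first fix $\rho>0$ so small that $\frac{\Lambda}{2}\rho^{2}-C_{1}\rho^{N}\ge\frac{\Lambda}{4}\rho^{2}$, and then choose $\lambda^{\ast}>0$ so small that $\frac{\lambda}{q}C_{2}\rho^{q}\le\frac{\Lambda}{8}\rho^{2}$ for all $\lambda\in(0,\lambda^{\ast})$, which yields $I_{\lambda}\ge\alpha:=\frac{\Lambda}{8}\rho^{2}>0$ on $\{\|u\|_{H_{2}^{2}(M)}=\rho\}$. Since $f>0$, for any fixed $v\not\equiv0$ one has $I_{\lambda}(tv)\to-\infty$ as $t\to+\infty$, so there is $e$ with $\|e\|_{H_{2}^{2}(M)}>\rho$ and $I_{\lambda}(e)<0$; as $I_{\lambda}(0)=0$, the mountain--pass theorem supplies a Palais--Smale sequence $(u_{k})$ at the level $c_{\lambda}=\inf_{\gamma}\max_{[0,1]}I_{\lambda}\circ\gamma\ge\alpha>0$, the infimum running over continuous paths $\gamma$ from $0$ to $e$.

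\emph{Compactness below an explicit threshold.} From $I_{\lambda}(u_{k})-\frac{1}{N}I_{\lambda}'(u_{k})(u_{k})=(\frac{1}{2}-\frac{1}{N})\langle u_{k},u_{k}\rangle_{P}-\lambda(\frac{1}{q}-\frac{1}{N})\int_{M}|u_{k}|^{q}$ and $q<2$, the sequence $(u_{k})$ is bounded in $H_{2}^{2}(M)$; passing to a subsequence, $u_{k}\rightharpoonup u$ in $H_{2}^{2}(M)$, $u_{k}\to u$ in every $L^{p}(M)$ with $p<N$ and a.e., and $u$ is a weak solution of $(\ref{2})$. Writing $u_{k}=u+v_{k}$ with $v_{k}\rightharpoonup0$, the compact embeddings $H_{2}^{2}(M)\hookrightarrow W^{1,2r'}(M)$ and $H_{2}^{2}(M)\hookrightarrow L^{2s'}(M)$ — which are exactly what $r>\frac{n}{2}$ and $s>\frac{n}{4}$ provide — give $\int_{M}a|\nabla_{g}v_{k}|^{2}\to0$ and $\int_{M}b\,v_{k}^{2}\to0$, so $\langle v_{k},v_{k}\rangle_{P}=\|\Delta_{g}v_{k}\|_{2}^{2}+o(1)$. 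Expanding the quadratic form (its cross terms vanish) and applying the Brezis--Lieb lemma to $\int_{M}f|u_{k}|^{N}$, one obtains
\begin{equation*}
c_{\lambda}=I_{\lambda}(u)+\frac{2}{n}\,\ell,\qquad\ell:=\lim_{k}\langle v_{k},v_{k}\rangle_{P}=\lim_{k}\int_{M}f|v_{k}|^{N}\ge0.
\end{equation*}
Using the sharp second Sobolev inequality $\|w\|_{N}^{2}\le(K_{0}^{2}+\varepsilon)\|\Delta_{g}w\|_{2}^{2}+C_{\varepsilon}\|w\|_{2}^{2}$ on $(M,g)$, with $K_{0}=K(n,2)$ the optimal constant in $\dot{H}^{2}(\mathbb{R}^{n})\hookrightarrow L^{2n/(n-4)}(\mathbb{R}^{n})$, and $\|v_{k}\|_{2}\to0$, it follows that either $\ell=0$ or $\ell\ge(\max_{M}f)^{-(n-4)/4}K_{0}^{-n/2}$. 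Consequently, if the level satisfies $c_{\lambda}<c^{\ast}:=\frac{2}{n}(\max_{M}f)^{-(n-4)/4}K_{0}^{-n/2}$, the alternative ``$\ell>0$ and $u\equiv0$'' (which would force $c_{\lambda}\ge c^{\ast}$) is ruled out, so $u$ is automatically a \emph{nontrivial} weak solution: if $\ell=0$ then $u_{k}\to u$ strongly and $I_{\lambda}(u)=c_{\lambda}\ge\alpha>0$, while if $\ell>0$ then $u\not\equiv0$ directly.

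\emph{Crossing the threshold, and the main obstacle.} It remains to exhibit, for $\lambda$ small, a competitor realizing $c_{\lambda}<c^{\ast}$. Let $x_{o}$ be the maximum point of $f$, let $u_{\varepsilon}(x)=c_{n}\varepsilon^{(n-4)/2}(\varepsilon^{2}+|x|^{2})^{-(n-4)/2}$ be the Edmunds--Fortunato--Jannelli extremal, and set $w_{\varepsilon}=\eta\,u_{\varepsilon}$ with $\eta$ a fixed cutoff supported near $x_{o}$, read in geodesic normal coordinates at $x_{o}$, so that $c_{\lambda}\le\sup_{t\ge0}I_{\lambda}(t\,w_{\varepsilon})$. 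One then inserts the classical expansions of $\int_{M}(\Delta_{g}w_{\varepsilon})^{2}$ (whose $\varepsilon^{2}$ term carries $S_{g}(x_{o})$ for $n>6$, with the appropriate lower--order or logarithmic corrections for $n=6$) and of $\int_{M}f|w_{\varepsilon}|^{N}$ (whose $\varepsilon^{2}$ term carries $\frac{\Delta f(x_{o})}{f(x_{o})}$), estimates the singular contributions by $|\int_{M}a|\nabla_{g}w_{\varepsilon}|^{2}|\le C\|a\|_{r}\varepsilon^{2-n/r}$ and $|\int_{M}b\,w_{\varepsilon}^{2}|\le C\|b\|_{s}\varepsilon^{4-n/s}$, and uses that the concave term $-\frac{\lambda}{q}\|w_{\varepsilon}\|_{q}^{q}$ only lowers the supremum; optimizing in $t$ produces an upper bound for $c_{\lambda}$ that is strictly below $c^{\ast}$ once $\varepsilon$ is small, the sign condition making this possible being precisely $(\ref{C})$ (for $n=6$ the leading correction is governed by $S_{g}(x_{o})$ alone, whence the requirement $S_{g}(x_{o})>0$). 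Shrinking $\lambda^{\ast}$ if needed so the $\lambda$-dependent terms do not interfere, one gets $c_{\lambda}<c^{\ast}$, and the preceding step delivers the nontrivial weak solution of $(\ref{2})$. I expect this last step to be the real difficulty: because $a,b$ are only $L^{r},L^{s}$ with $r>\frac{n}{2}$, $s>\frac{n}{4}$, the singular terms cannot be controlled sharply and a priori contribute powers $\varepsilon^{2-n/r},\varepsilon^{4-n/s}$ that may beat the $\varepsilon^{2}$ curvature term; reconciling this with the requirement $c_{\lambda}<c^{\ast}$ is exactly what forces the shape of $(\ref{C})$, in particular the factor $(1+\|a\|_{r}+\|b\|_{s})^{4/n}$ dividing the coefficient of $S_{g}(x_{o})$. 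A secondary technical point, responsible in part for the restriction $n\ge6$, is the validity on $(M,g)$ of the sharp fourth--order Sobolev inequality with Euclidean best constant $K_{0}^{2}$ and no first--order term, which underlies the threshold $c^{\ast}$.
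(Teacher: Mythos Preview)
Your variational scheme differs from the paper's: you run the Ambrosetti--Rabinowitz mountain pass on $I_\lambda$, whereas the paper minimises $J_\lambda$ over the Nehari-type set $M_\lambda=\{\Phi_\lambda(u)=0,\ \|u\|\ge\tau\}$ via Ekeland's principle (Lemmas~\ref{lem}--\ref{lem4}). Both frameworks are legitimate here, and your compactness analysis below the threshold $c^{\ast}=\frac{2}{n}K_0^{-n/4}(\max f)^{-(n-4)/4}$ is essentially the paper's Lemma~\ref{lem3}.

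The genuine gap is in the last step. With the \emph{undilated} bubble $w_\varepsilon$ that you propose, your own estimates give
\[
\langle w_\varepsilon,w_\varepsilon\rangle_{P}\le \frac{1}{K_0^{n/4}f(x_o)^{(n-4)/4}}\Bigl(1+A\|a\|_r\,\varepsilon^{2-n/r}+B\|b\|_s\,\varepsilon^{4-n/s}-C_1S_g(x_o)\varepsilon^{2}\Bigr),
\]
and since $0<2-\tfrac{n}{r}<2$ and $0<4-\tfrac{n}{s}<4$, the singular contributions dominate the curvature term as $\varepsilon\to0$. Maximising $t\mapsto \tfrac{\alpha}{2}t^{2}-\tfrac{\beta}{N}t^{N}$ yields $\tfrac{2}{n}\alpha^{n/4}\beta^{-(n-4)/4}$; here $\alpha=1+A\|a\|_r\varepsilon^{2-n/r}+\cdots>1$, so the leading term of $\sup_t I_\lambda(tw_\varepsilon)$ is \emph{strictly above} $c^{\ast}$, and no $\varepsilon^{2}$ correction can rescue the inequality. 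In other words, ``reconciling'' the powers is not merely a bookkeeping issue producing the shape of~$(\ref{C})$; with $w_\varepsilon$ as written the estimate simply fails.

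The device the paper uses (Sections~7--8) is to \emph{dilate} the bubble by the fixed factor $\theta=(1+\|a\|_r+\|b\|_s)^{1/n}$, i.e.\ to take
\[
u_\varepsilon(x)=c_n\,\eta(\rho)\bigl((\theta\rho)^{2}+\varepsilon^{2}\bigr)^{-(n-4)/2}.
\]
Then both $\|\Delta u_\varepsilon\|_2^{2}$ and $\int f|u_\varepsilon|^{N}$ carry an extra prefactor $\theta^{-n}$, while the singular terms are still $o(1)$ and can be absorbed into the crude bound $1+o(1)\le\theta^{4}$ for $\varepsilon$ small. Maximising in $t$ now gives leading term $\theta^{-n}\cdot\tfrac{2}{n}(\theta^{4})^{n/4}=\tfrac{2}{n}$ times $K_0^{-n/4}f(x_o)^{-(n-4)/4}$, i.e.\ \emph{exactly} $c^{\ast}$; the maximiser is $t_o=\theta^{(n-4)/2}$, and it is the sign of the $\varepsilon^{2}$ remainder $R(t_o)$ that decides the issue. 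Computing $R(t_o)<0$ with $t_o^{N}=\theta^{n}$, $t_o^{2}=\theta^{n-4}$ is precisely what yields the factor $(1+\|a\|_r+\|b\|_s)^{-4/n}=\theta^{-4}$ in~$(\ref{C})$ (and the $\log(1/\varepsilon^{2})$ variant for $n=6$). So the missing idea is this $\theta$-rescaling of the test function; once you insert it, your mountain-pass route goes through.
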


For fixed $R\in M$, we define the function $\rho $ on $M$ by

\begin{equation}
\rho (Q)=\left\{ 
\begin{array}{c}
d(R,Q)\text{ \ if \ \ \ \ \ \ }d(R,Q)<\delta (M) \\ 
\delta (M)\text{ \ if\ \ \ \ \ }d(R,Q)\geq \delta (M)%
\end{array}%
\right.  \tag{3}  \label{3}
\end{equation}%
where $\delta (M)$ denotes the injectivity radius of $M$.

For real numbers $\sigma $ and $\mu $, consider the equation in the
distribution sense

\begin{equation}
\Delta ^{2}u-\nabla ^{i}(\frac{a}{\rho ^{\sigma }}\nabla _{i}u)+\frac{bu}{%
\rho ^{\mu }}=\lambda \left\vert u\right\vert ^{q-2}u+f(x)\left\vert
u\right\vert ^{N-2}u  \tag{4}  \label{4}
\end{equation}%
where the functions $a$ and $b$ are smooth on $M$,

\begin{corollary}
Let $0<\sigma <\frac{n}{r}<2$ and $0<\mu <\frac{n}{s}<4$. Suppose that%
\begin{equation*}
\left\{ 
\begin{array}{c}
\frac{\Delta f(x_{o})}{f\left( x_{o}\right) }<\frac{1}{3}\left( \frac{%
(n-1)n\left( n^{2}+4n-20\right) }{\left( n^{2}-4\right) \left( n-4\right)
\left( n-6\right) }\frac{1}{\left( 1+\left\Vert a\right\Vert _{r}+\left\Vert
b\right\Vert _{s}\right) ^{\frac{4}{n}}}-1\right) S_{g}\left( x_{o}\right) 
\text{ in case }n>6 \\ 
S_{g}(x_{\circ })>0\text{ \ in case }n=6\text{.}%
\end{array}%
\right.
\end{equation*}

Then there is $\lambda _{\ast }>0$ such that if $\lambda \in (0,$ $\lambda
_{\ast })$, the equation (\ref{4}) possesses a weak non trivial solution $%
u_{\sigma ,\mu }\in M_{\lambda }$.
\end{corollary}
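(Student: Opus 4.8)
The plan is to deduce the Corollary from the main Theorem by a change-of-potential argument: the singular coefficients $a/\rho^{\sigma}$ and $b/\rho^{\mu}$ in \eqref{4} should be shown to lie in $L^{r}(M)$ and $L^{s}(M)$ respectively, so that equation \eqref{4} becomes a particular instance of equation \eqref{2} with new coefficients $\tilde a = a/\rho^{\sigma}$, $\tilde b = b/\rho^{\mu}$, and then to verify that condition \eqref{C} for these new coefficients follows from the hypothesis stated in the Corollary.

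First I would establish the integrability claims. Since $a$ is smooth on the compact manifold $M$, it is bounded, so $\bigl\Vert a/\rho^{\sigma}\bigr\Vert_{r}^{r} \le \Vert a\Vert_{\infty}^{r}\int_{M}\rho^{-\sigma r}\,dv_{g}$. The function $\rho$ behaves like the distance to the fixed point $R$ near $R$ and is bounded below by a positive constant away from $R$; in geodesic normal coordinates the volume element is comparable to the Euclidean one, so $\int_{M}\rho^{-\sigma r}\,dv_{g} < \infty$ precisely when $\sigma r < n$, i.e. $\sigma < n/r$, which is exactly the standing assumption. Hence $\tilde a \in L^{r}(M)$ with $r > n/2$, and identically $\tilde b \in L^{s}(M)$ with $s > n/4$ under $\mu < n/s$. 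Thus \eqref{4} is of the form \eqref{2}.

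Next I would check that coercivity of $P_{g}$ — which is a hypothesis of the main Theorem — can be taken for granted here, or else invoke it as part of the hypothesis; and then compare the geometric conditions. The condition \eqref{C} needed for the Theorem applied to \eqref{4} reads, for $n>6$,
\begin{equation*}
\frac{\Delta f(x_{o})}{f(x_{o})} < \left(\frac{n(n^{2}+4n-20)}{3(n+2)(n-4)(n-6)}\frac{1}{\bigl(1+\Vert\tilde a\Vert_{r}+\Vert\tilde b\Vert_{s}\bigr)^{4/n}} - \frac{n-2}{3(n-1)}\right) S_{g}(x_{o}).
\end{equation*}
I would need to reconcile the constant $\frac{n(n^{2}+4n-20)}{3(n+2)(n-4)(n-6)}$ appearing in \eqref{C} with the constant $\frac{(n-1)n(n^{2}+4n-20)}{(n^{2}-4)(n-4)(n-6)}$ stated in the Corollary: note that multiplying the former by $\tfrac13$ and the latter shows $\frac{1}{3}\cdot\frac{(n-1)n(n^{2}+4n-20)}{(n^{2}-4)(n-4)(n-6)} = \frac{(n-1)}{3(n+2)}\cdot\frac{n(n^{2}+4n-20)}{(n-4)(n-6)}\cdot\frac{1}{n-2}$, so the two hypotheses match after factoring $\frac{n-1}{n-2}$ out of the bracket — equivalently the Corollary's bracketed expression equals $\frac{n-1}{n-2}$ times that of \eqref{C}, and since $S_{g}(x_{o})$ must be positive for the right-hand side to be nontrivial anyway, the inequality in the Corollary implies the inequality \eqref{C} with $\tilde a,\tilde b$ in place of $a,b$. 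For $n=6$ the two conditions are literally identical. Then the main Theorem yields $\lambda^{\ast}>0$ and, for $\lambda\in(0,\lambda^{\ast})$, a nontrivial weak solution $u_{\sigma,\mu}$, which by construction lies in the appropriate Nehari-type set $M_{\lambda}$.

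The main obstacle I expect is the bookkeeping in the second step: making the algebraic comparison of the two constants completely rigorous (in particular confirming the $\frac{n-1}{n-2}$ factor and checking that the substitution $\Vert a\Vert_{r}\mapsto\Vert\tilde a\Vert_{r}$ does not worsen the estimate, which requires being careful that the norm of $\rho^{-\sigma}a$ is what actually enters), and ensuring that the restrictions $\sigma<n/r<2$, $\mu<n/s<4$ are exactly what is needed for both the $L^{r},L^{s}$ membership and for $r>n/2$, $s>n/4$ to hold simultaneously. The analytic heart — existence of the solution — is entirely inherited from the main Theorem, so no new PDE work is required.
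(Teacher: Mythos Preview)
Your approach is exactly the paper's: set $\tilde a=a/\rho^{\sigma}$, $\tilde b=b/\rho^{\mu}$, check that $\sigma<n/r$ and $\mu<n/s$ force $\tilde a\in L^{r}(M)$, $\tilde b\in L^{s}(M)$ (with $r>n/2$, $s>n/4$), and then invoke the main Theorem. The algebraic discrepancy you labor over between the bracketed constant in condition \eqref{C} and the one in the Corollary is not something to reconcile by an implication---it is a typo in the paper (compare the statement of Theorem~\ref{thm5} with the last displayed inequality of its proof: they are meant to be the same expression but differ by the very $(n-1)/(n-2)$ factor you found), and the paper also silently writes $\Vert a\Vert_{r},\Vert b\Vert_{s}$ where $\Vert\tilde a\Vert_{r},\Vert\tilde b\Vert_{s}$ should appear, which is the substitution issue you correctly flag as an obstacle.
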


In the sharp case $\sigma =2$ and $\mu =4$, letting $K(n,2,\gamma )$ the
best constant in the Hardy-Sobolev inequality given by Theorem \ref{th7} we
obtain the following result

\begin{theorem}
Let $(M,g)$ be a Riemannian compact manifold of dimension $n\geq 5$. Let $%
\left( u_{_{\sigma _{m},\mu _{m}}}\right) _{m}$ be a sequence in $M_{\lambda
}$ such that 
\begin{equation*}
\left\{ 
\begin{array}{c}
J_{\lambda ,\sigma ,\mu }(u_{_{\sigma _{m},\mu _{m}}})\leq c_{\sigma ,\mu }
\\ 
\nabla J_{\lambda }(u_{_{\sigma ,\mu }})-\mu _{_{\sigma ,\mu }}\nabla \Phi
_{\lambda }(u_{_{\sigma ,\mu }})\rightarrow 0%
\end{array}%
\right. \text{.}
\end{equation*}%
Suppose that 
\begin{equation*}
c_{\sigma ,\mu }<\frac{2}{n\text{ }K_{o}^{\frac{n}{4}}(f(x_{\circ }))^{\frac{%
n-4}{4}}}
\end{equation*}%
and 
\begin{equation*}
1+a^{-}\max \left( K(n,2,\sigma ),A\left( \varepsilon ,\sigma \right)
\right) +b^{-}\max \left( K(n,2,\mu ),A\left( \varepsilon ,\mu \right)
\right) >0
\end{equation*}%
then the equation%
\begin{equation*}
\Delta ^{2}u-\nabla ^{\mu }(\frac{a}{\rho ^{2}}\nabla _{\mu }u)+\frac{bu}{%
\rho ^{4}}=f\left\vert u\right\vert ^{N-2}u+\lambda \left\vert u\right\vert
^{q-2}u
\end{equation*}%
in the distribution has a weak non trivial solution.
\end{theorem}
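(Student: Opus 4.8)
The two hypotheses say, first, that $(u_{\sigma_{m},\mu_{m}})_{m}$ is a Palais--Smale sequence, at level at most $c_{\sigma,\mu}$, for the functional $J_{\lambda,\sigma,\mu}$ restricted to the (Nehari-type) constraint $M_{\lambda}$ attached to the sharp problem, whose quadratic part is
\[
Q(u)=\int_{M}(\Delta u)^{2}\,dv_{g}+\int_{M}\frac{a}{\rho^{2}}\,|\nabla_{g}u|^{2}\,dv_{g}+\int_{M}\frac{b}{\rho^{4}}\,u^{2}\,dv_{g};
\]
and, second, that the sign condition $1+a^{-}\max(K(n,2,\sigma),A(\varepsilon,\sigma))+b^{-}\max(K(n,2,\mu),A(\varepsilon,\mu))>0$ holds, which is exactly what the Hardy--Rellich inequalities of Theorem~\ref{th7} need in order to make $Q$ coercive on $H_{2}^{2}(M)$ up to a multiple of $\|\cdot\|_{L^{2}}^{2}$, i.e.\ $Q(u)\geq\theta\int_{M}(\Delta u)^{2}\,dv_{g}-C\|u\|_{L^{2}}^{2}$ for some $\theta>0$. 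The plan is the four classical steps: (i) the sequence is bounded in $H_{2}^{2}(M)$; (ii) it has a weak limit $u$ and the Lagrange multiplier vanishes in the limit; (iii) $u$ is a distributional solution of the sharp equation; (iv) the strict bound $c_{\sigma,\mu}<\frac{2}{n\,K_{o}^{n/4}(f(x_{\circ}))^{(n-4)/4}}$ forces $u\not\equiv0$.

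For (i)--(iii): combining $J_{\lambda,\sigma,\mu}(u_{\sigma_{m},\mu_{m}})\leq c_{\sigma,\mu}$ with $\langle\nabla J_{\lambda}(u_{\sigma_{m},\mu_{m}})-\mu_{\sigma_{m},\mu_{m}}\nabla\Phi_{\lambda}(u_{\sigma_{m},\mu_{m}}),u_{\sigma_{m},\mu_{m}}\rangle=o(\|u_{\sigma_{m},\mu_{m}}\|_{H_{2}^{2}})$, eliminating the multiplier through $u_{\sigma_{m},\mu_{m}}\in M_{\lambda}$, and using $\tfrac1q-\tfrac12>0$ together with $\tfrac12-\tfrac1N=\tfrac2n>0$, one obtains $\tfrac2n\,Q(u_{\sigma_{m},\mu_{m}})\leq c_{\sigma,\mu}+\lambda C\|u_{\sigma_{m},\mu_{m}}\|_{L^{q}}^{q}+o(\|u_{\sigma_{m},\mu_{m}}\|_{H_{2}^{2}})$; the coercivity of $Q$ and the compactness of $H_{2}^{2}(M)\hookrightarrow L^{q}(M)$ then bound $\int_{M}(\Delta u_{\sigma_{m},\mu_{m}})^{2}\,dv_{g}$, hence $\|u_{\sigma_{m},\mu_{m}}\|_{H_{2}^{2}}$. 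Along a subsequence $u_{\sigma_{m},\mu_{m}}\rightharpoonup u$ in $H_{2}^{2}(M)$, $u_{\sigma_{m},\mu_{m}}\to u$ in $L^{p}(M)$ for every $p<N$ and a.e., and $\mu_{\sigma_{m},\mu_{m}}\to\mu_{\infty}$; testing the relation against $u_{\sigma_{m},\mu_{m}}$ and using a standard property of $M_{\lambda}$ (valid for $\lambda$ small) that keeps $\langle\nabla\Phi_{\lambda}(u_{\sigma_{m},\mu_{m}}),u_{\sigma_{m},\mu_{m}}\rangle$ away from $0$ yields $\mu_{\infty}=0$, so $(u_{\sigma_{m},\mu_{m}})$ is a genuine Palais--Smale sequence. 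To pass to the limit in the equation tested against $\varphi\in C^{\infty}(M)$, I would split $M=B_{\delta}(R)\cup(M\setminus B_{\delta}(R))$: off $B_{\delta}(R)$ the weights $\rho^{-2},\rho^{-4}$ are bounded and Rellich--Kondrachov applies, while on $B_{\delta}(R)$ the Hardy and Rellich inequalities bound the contribution by $C\big(\int_{B_{\delta}(R)}\rho^{-2}|\nabla\varphi|^{2}\big)^{1/2}$ and $C\big(\int_{B_{\delta}(R)}\rho^{-4}\varphi^{2}\big)^{1/2}$, which are $o_{\delta}(1)$ uniformly in $m$ since $\varphi$ is smooth and $n\geq5$; the subcritical term converges in the dual by compactness, and $f|u_{\sigma_{m},\mu_{m}}|^{N-2}u_{\sigma_{m},\mu_{m}}\rightharpoonup f|u|^{N-2}u$ in $L^{N/(N-1)}(M)$, so $u$ solves the sharp equation in the distribution sense.

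The crux is step (iv). Suppose $u\equiv0$ and set $v_{m}:=u_{\sigma_{m},\mu_{m}}\rightharpoonup0$. As $v_{m}\to0$ in $L^{q}(M)$, the $\lambda$--term drops out of both the constraint and the energy, so $L:=\lim_{m}\int_{M}f|v_{m}|^{N}\,dv_{g}$ exists, with $Q(v_{m})=L+o(1)$ and $J_{\lambda,\sigma,\mu}(v_{m})=\tfrac2nL+o(1)$; moreover $L>0$, for $L=0$ would give $Q(v_{m})\to0$, hence $v_{m}\to0$ strongly in $H_{2}^{2}(M)$, impossible for a sequence on $M_{\lambda}$ at the level $c_{\sigma,\mu}$. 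Thus a definite amount of $L^{N}$--mass concentrates. Concentration at the singular point $R$ is ruled out by the Hardy--Sobolev inequality of Theorem~\ref{th7} under the sign hypothesis; concentration away from $R$ makes the weighted terms in $Q$ of lower order, so the sharp Sobolev inequality $K_{o}\int_{M}(\Delta v_{m})^{2}\,dv_{g}\geq\big(\int_{M}|v_{m}|^{N}\,dv_{g}\big)^{2/N}+o(1)$ together with $\int_{M}f|v_{m}|^{N}\,dv_{g}\leq f(x_{\circ})\int_{M}|v_{m}|^{N}\,dv_{g}$ gives, using $\tfrac{N}{N-2}=\tfrac n4$ and $\tfrac{2}{N-2}=\tfrac{n-4}{4}$,
\[
L\ \geq\ \frac{1}{K_{o}^{\,n/4}\,(f(x_{\circ}))^{(n-4)/4}}\,.
\]
Hence $c_{\sigma,\mu}\geq\lim_{m}J_{\lambda,\sigma,\mu}(v_{m})=\tfrac2nL\geq\tfrac{2}{n\,K_{o}^{\,n/4}(f(x_{\circ}))^{(n-4)/4}}$, contradicting the hypothesis. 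Therefore $u\not\equiv0$, and $u$ is the desired non-trivial weak solution.

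I expect step (iv) to be the genuine obstacle. The terms $\rho^{-2}|\nabla v_{m}|^{2}$ and $\rho^{-4}v_{m}^{2}$ are exactly borderline for the Hardy and Rellich inequalities, so there is no subcritical compactness near $R$, and one must weigh, with sharp constants, the energy cost of a bubble concentrating at $R$ (governed by $K(n,2,\sigma),K(n,2,\mu)$, whence the role of the sign condition) against that of a bubble concentrating elsewhere (governed by $K_{o}$ and $f(x_{\circ})$), then discard both by the strict bound on $c_{\sigma,\mu}$. A second, more technical, difficulty is to keep every remainder term uniform as $(\sigma_{m},\mu_{m})\to(2,4)$, so that the limiting equation is genuinely the sharp one rather than an approximate one.
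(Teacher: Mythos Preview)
Your overall strategy---coercivity from the sign hypothesis, boundedness, weak limit, vanishing multiplier, passage to the limit in the equation, then non-triviality from the strict energy bound---matches the paper's. The differences are in execution, and they are worth noting.

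For step~(iii), the paper does not split $M$ into $B_{\delta}(R)$ and its complement. Instead it observes that boundedness in $H_{2}^{2}(M)$ together with the Hardy--Sobolev inequality forces $\nabla u_{m}\rightharpoonup\nabla u$ weakly in $L^{2}(M,\rho^{-2})$ and $u_{m}\rightharpoonup u$ weakly in $L^{2}(M,\rho^{-4})$; the passage from $\rho^{-\sigma_{m}}$ to $\rho^{-2}$ (and from $\rho^{-\mu_{m}}$ to $\rho^{-4}$) is then handled by writing
\[
\int_{M}\frac{a}{\rho^{\sigma_{m}}}\nabla u_{m}\nabla\varphi-\int_{M}\frac{a}{\rho^{2}}\nabla u\nabla\varphi
=\int_{M}a\,\nabla u_{m}\nabla\varphi\Bigl(\frac{1}{\rho^{\sigma_{m}}}-\frac{1}{\rho^{2}}\Bigr)
+\int_{M}\frac{a}{\rho^{2}}\nabla(u_{m}-u)\nabla\varphi,
\]
and applying dominated convergence to the first integral and weak convergence in the weighted space to the second. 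This is shorter than your cut-off scheme and avoids the uniformity-in-$\delta$ bookkeeping.

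For step~(iv), the paper takes a different and more direct route: rather than arguing by contradiction that $u\not\equiv 0$, it proves \emph{strong} convergence $u_{m}\to u$ in $H_{2}^{2}(M)$ exactly as in Lemma~\ref{lem3}. From $\langle\nabla J_{\lambda}(u_{m})-\nabla J_{\lambda}(u),u_{m}-u\rangle=o(1)$ and Br\'ezis--Lieb one extracts
\[
\int_{M}(\Delta(u_{m}-u))^{2}\,dv_{g}=\int_{M}f|u_{m}-u|^{N}\,dv_{g}+o(1),
\]
and then the sharp Sobolev inequality plus the strict bound $c_{\sigma,\mu}<\frac{2}{n}K_{o}^{-n/4}(f(x_{\circ}))^{-(n-4)/4}$ force $\|\Delta(u_{m}-u)\|_{2}\to 0$. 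Strong convergence gives $u\in M_{\lambda}$ immediately (the constraint and the lower bound $\|u\|\geq\tau$ pass to the limit), so non-triviality is automatic. In particular the paper never needs the dichotomy ``concentration at $R$ versus concentration elsewhere'' that you introduce; the singular quadratic terms are handled once, at the level of coercivity, and thereafter the compactness argument is the standard non-singular one. Your route works, but it is longer than necessary and the claim ``concentration at $R$ is ruled out by the sign hypothesis'' would itself require a careful argument that the paper simply does not need.
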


\section{Existence of solutions}

In this section we focus on the existence of solutions to equation (\ref{1}%
); we use a variational method so we consider on $H_{2}^{2}(M)$ the
functional 
\begin{equation*}
J_{\lambda }(u)=\frac{1}{2}\int_{M}\left( \left\vert \Delta _{g}u\right\vert
^{2}-a(x)\left\vert \nabla _{g}u\right\vert ^{2}+b(x)u^{2}\right) dv_{g}-%
\frac{\lambda }{q}\int_{M}\left\vert u\right\vert ^{q}dv_{g}-\frac{1}{N}%
\int_{M}f(x)\left\vert u\right\vert ^{N}dv_{g}\text{.}
\end{equation*}%
First, we put%
\begin{equation*}
\Phi _{\lambda }(u)=\left\langle \nabla J_{\lambda }(u),\text{ }%
u\right\rangle
\end{equation*}%
hence%
\begin{equation*}
\Phi _{\lambda }(u)=\int_{M}\left( \left( \Delta _{g}u\right)
^{2}-a(x)\left\vert \nabla _{g}u\right\vert ^{2}+b(x)u^{2}\right)
dv_{g}-\lambda \int_{M}\left\vert u\right\vert
^{q}dv_{g}-\int_{M}f(x)\left\vert u\right\vert ^{N}dv_{g}\text{.}
\end{equation*}%
We let%
\begin{equation*}
M_{\lambda }=\left\{ u\in H_{2}^{2}(M):\text{ }\Phi _{\lambda }(u)=0\text{
and }\left\Vert u\right\Vert \geq \tau >0\right\} \text{.}
\end{equation*}%
The operator $P_{g}(u)$ is said coercive if there exits $\Lambda >0$ such
that for any $u\in H_{2}^{2}(M)$ 
\begin{equation*}
\int_{M}uP_{g}(u)dv_{g}\geq \Lambda \left\Vert u\right\Vert
_{H_{2}^{2}(M)}^{2}.
\end{equation*}

\begin{proposition}
$\left\Vert u\right\Vert =(\int_{M}\left\vert \Delta _{g}u\right\vert
^{2}-a(x)\left\vert \nabla _{g}u\right\vert ^{2}+b(x)u^{2}dv_{g})^{\frac{1}{2%
}}$ is an equivalent norm to the usual one on $H_{2}^{2}(M)$ if and only if $%
P_{g}$ is coercive.
\end{proposition}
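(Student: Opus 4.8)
The plan is to exploit the identity $\int_M u\,P_g(u)\,dv_g=\|u\|^2$, which holds by integration by parts on the closed manifold $M$ (the $\Delta_g^2$-term paired with $u$ contributes $\int_M(\Delta_g u)^2\,dv_g$, and the divergence term contributes $-\int_M a(x)|\nabla_g u|^2\,dv_g$). Thus coercivity of $P_g$ says exactly that $\|u\|^2\ge\Lambda\|u\|_{H_2^2(M)}^2$ for all $u\in H_2^2(M)$, i.e. it is precisely the lower half of the desired norm equivalence. The real content is therefore to show that the upper half, $\|u\|^2\le C\|u\|_{H_2^2(M)}^2$, holds \emph{unconditionally}, thanks to the integrability hypotheses $a\in L^r(M)$ with $r>\tfrac n2$ and $b\in L^s(M)$ with $s>\tfrac n4$; once that is in hand, both implications become essentially formal.

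For the implication ``equivalent norm $\Rightarrow$ $P_g$ coercive'': if $\|u\|\ge c\|u\|_{H_2^2(M)}$ for some $c>0$, squaring gives $\int_M u\,P_g(u)\,dv_g\ge c^2\|u\|_{H_2^2(M)}^2$, which is coercivity with $\Lambda=c^2$. For the converse, assume $P_g$ coercive. The lower bound $\|u\|\ge\sqrt{\Lambda}\,\|u\|_{H_2^2(M)}$ is immediate, and in particular the symmetric bilinear form $(u,v)\mapsto\int_M u\,P_g(v)\,dv_g$ is positive definite, hence an inner product, so $\|\cdot\|$ is genuinely a norm (triangle inequality via Cauchy--Schwarz for that form). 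It remains to bound $\|u\|^2\le\int_M|\Delta_g u|^2\,dv_g+\int_M|a|\,|\nabla_g u|^2\,dv_g+\int_M|b|\,u^2\,dv_g$. The first term is at most $\|u\|_{H_2^2(M)}^2$. For the second, H\"older with exponents $r$ and $r/(r-1)$ gives $\int_M|a|\,|\nabla_g u|^2\le\|a\|_r\,\|\nabla_g u\|_{2r/(r-1)}^2$; since $r>\tfrac n2$ one checks $2r/(r-1)\le 2n/(n-2)$, so the Sobolev embedding $H_2^2(M)\hookrightarrow H_1^2(M)\hookrightarrow L^{2n/(n-2)}(M)$ controls $\|\nabla_g u\|_{2r/(r-1)}$ by $C\|u\|_{H_2^2(M)}$. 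For the third, H\"older with $s$ and $s/(s-1)$ together with $s>\tfrac n4$ gives $2s/(s-1)\le 2n/(n-4)=N$, and $H_2^2(M)\hookrightarrow L^N(M)$ yields $\int_M|b|\,u^2\le C\|b\|_s\|u\|_{H_2^2(M)}^2$. Adding up, $\|u\|^2\le C'\|u\|_{H_2^2(M)}^2$, which with the lower bound proves equivalence. (These same estimates also show $\|u\|^2$ is a finite real number for every $u\in H_2^2(M)$, so the expression is well defined a priori.)

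I expect the only genuine obstacle to be the bookkeeping in this upper bound: one must verify that the conjugate exponents $2r/(r-1)$ and $2s/(s-1)$ produced by the H\"older step actually lie in the admissible range of the Sobolev embedding theorem, and this is exactly where $r>\tfrac n2$, $s>\tfrac n4$ (and, implicitly, $n\ge 5$ so that $N=\tfrac{2n}{n-4}$ is the relevant critical exponent) are used. Everything else --- the integration by parts, the positive-definiteness argument, and the trivial direction --- is routine.
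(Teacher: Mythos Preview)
Your proof is correct and follows essentially the same route as the paper: coercivity gives the lower bound immediately, and the upper bound is obtained via H\"older's inequality on the $a$- and $b$-terms followed by Sobolev embedding, using the hypotheses $r>\tfrac{n}{2}$, $s>\tfrac{n}{4}$ to land in the admissible range. The paper works with the endpoint norms $\|a\|_{n/2}$, $\|b\|_{n/4}$ and invokes the Bochner identity $\int_M|\nabla_g^2 u|^2=\int_M(|\Delta_g u|^2-R_{ij}\nabla^i u\,\nabla^j u)$ to control $\|\nabla u\|_{2^*}$, whereas you use $\|a\|_r$, $\|b\|_s$ directly and the subcritical embedding---and you are more complete in treating the converse direction and the fact that $\|\cdot\|$ is a genuine norm---but these are cosmetic differences.
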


\begin{proof}
If $P_{g}$ is coercive there is $\Lambda >0$ such that for any $u\in
H_{2}^{2}(M)$, 
\begin{equation*}
\int_{M}P_{g}(u)udv_{g}\geq \Lambda \left\Vert u\right\Vert
_{H_{2}^{2}(M)}^{2}
\end{equation*}%
and since $a\in L^{r}(M)$ and $b\in L^{s}(M)$ where\ $r>\frac{n}{2}$ and $s>%
\frac{n}{4}$, by H\"{o}lder's inequality we get%
\begin{equation*}
\int_{M}uP_{g}(u)dv_{g}\leq \left\Vert \Delta _{g}u\right\Vert
_{2}^{2}+\left\Vert a\right\Vert _{\frac{n}{2}}\left\Vert \nabla
_{g}u\right\Vert _{2^{\ast }}^{2}+\left\Vert b\right\Vert _{\frac{n}{4}%
}\left\Vert u\right\Vert _{N}^{2}
\end{equation*}%
where $2^{\ast }=\frac{2n}{n-2}$.

The Sobolev's inequalities lead to : for any $\eta >0$ 
\begin{equation*}
\left\Vert \nabla _{g}u\right\Vert _{2^{\ast }}^{2}\leq \max ((1+\eta
)K(n,1)^{2},A_{\eta })\int_{M}\left( \left\vert \nabla _{g}^{2}u\right\vert
^{2}+\left\vert \nabla _{g}u\right\vert ^{2}\right) dv_{g}
\end{equation*}%
where $K(n,1)$ denotes the best Sobolev's constant in the embedding $%
H_{1}^{2}\left( R^{n}\right) \hookrightarrow L^{\frac{2n}{n-2}}\left(
R^{n}\right) $, and for any $\epsilon >0$%
\begin{equation*}
\left\Vert u\right\Vert _{N}^{2}\leq \max ((1+\varepsilon
)K_{o},B_{\varepsilon })\left\Vert u\right\Vert _{H_{2}^{2}(M)}^{2}
\end{equation*}%
where in this latter inequality $K_{o}$ is the best Sobolev's constant in
the embedding $H_{1}^{2}\left( M\right) \hookrightarrow L^{\frac{2n}{n-2}%
}\left( M\right) $ and $B_{\epsilon }$ the corresponding see ( see \cite{3}%
). Now by the well known formula (see \cite{3}, page 115)%
\begin{equation*}
\int_{M}\left\vert \nabla _{g}^{2}u\right\vert ^{2}dv_{g}=\int_{M}\left(
\left\vert \Delta _{g}u\right\vert ^{2}-R_{ij}\nabla ^{i}u\nabla
^{j}u\right) dv_{g}
\end{equation*}%
where $R_{ij}$ denote the components of the Ricci curvature, there is a
constant $\beta >0$ such that%
\begin{equation*}
\int_{M}\left\vert \nabla _{g}^{2}u\right\vert ^{2}dv_{g}\leq
\int_{M}\left\vert \Delta _{g}u\right\vert ^{2}+\beta \left\vert \nabla
_{g}u\right\vert ^{2}dv_{g}
\end{equation*}%
so we get%
\begin{equation*}
\left\Vert \nabla _{g}u\right\Vert _{2^{\ast }}^{2}\leq (\beta +1)\max
((1+\eta )K(n,1)^{2},A_{\eta })\int_{M}\left( \left\vert \Delta
_{g}u\right\vert ^{2}+\left\vert \nabla _{g}u\right\vert ^{2}+u^{2}\right)
dv_{g}
\end{equation*}%
and we infer that%
\begin{equation*}
\int_{M}P_{g}(u)udv_{g}\leq \left\Vert u\right\Vert
_{H_{2}^{2}(M)}^{2}+(\beta +1)\left\Vert a\right\Vert _{\frac{n}{2}}\max
((1+\eta )K(n,1)^{2},A_{\eta })\left\Vert u\right\Vert _{H_{2}^{2}(M)}^{2}+
\end{equation*}%
\begin{equation*}
\left\Vert b\right\Vert _{\frac{n}{4}}\max ((1+\varepsilon
)K_{o},B_{\varepsilon })\left\Vert u\right\Vert _{H_{2}^{2}(M)}^{2}\text{.}
\end{equation*}%
Hence%
\begin{equation*}
\int_{M}uP_{g}(u)dv_{g}\leq \underbrace{\max \left( 1,\left\Vert
b\right\Vert _{\frac{n}{4}}\max ((1+\varepsilon )K_{o},B_{\varepsilon
}),(\beta +1)\left\Vert a\right\Vert _{\frac{n}{2}}\max ((1+\varepsilon
)K(n,1)^{2},A_{\varepsilon })\right) }_{>0}\left\Vert u\right\Vert
_{H_{2}^{2}(M)}^{2}\text{.}
\end{equation*}%
.
\end{proof}

\begin{lemma}
\label{lem} The set $M_{\lambda }$ is non empty provided that $\lambda $ $%
\in (0,$ $\lambda _{\circ })$ where 
\begin{equation*}
\lambda _{\circ }=\frac{\left( 2^{q-2}-2^{q-N}\right) \Lambda ^{\frac{N-q}{%
N-2}}}{V(M)^{(1-\frac{q}{N})}\left( \max_{x\in M}f(x)\right) ^{\frac{2-q}{N-2%
}}(\max ((1+\varepsilon )K\left( n,2\right) ,A_{\varepsilon }))^{\frac{N-q}{%
N-2}}}\text{.}
\end{equation*}
\end{lemma}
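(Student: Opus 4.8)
The plan is to produce a single element of $M_\lambda$ by rescaling a fixed function and applying the intermediate value theorem to the associated fibering map. By the Proposition just proved, $\|\cdot\|$ is an equivalent norm on $H_2^2(M)$, so I fix any $u$ with $\|u\|=1$ and consider, for $t>0$,
\begin{equation*}
\Phi_\lambda(tu)=t^2-\lambda t^q\int_M|u|^q\,dv_g-t^N\int_M f|u|^N\,dv_g .
\end{equation*}
Since $1<q<2<N$, this is $<0$ for $t$ near $0$ (the $-\lambda t^q$ term dominates $t^2$) and tends to $-\infty$ as $t\to\infty$; hence if I can exhibit a single value $t^\dagger>0$ with $\Phi_\lambda(t^\dagger u)>0$, there is a zero $t_0>t^\dagger$, and $u_0:=t_0u$ satisfies $\Phi_\lambda(u_0)=0$ with $\|u_0\|=t_0>t^\dagger$. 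Provided $t^\dagger$ admits a lower bound independent of $u$, this yields $u_0\in M_\lambda$ as soon as $\tau$ is taken no larger than that bound.

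The crux is the choice of $t^\dagger$. Put $A=\int_M|u|^q\,dv_g$, $B=\int_M f|u|^N\,dv_g$ and $T=B^{-1/(N-2)}$, so that $T^2=T^NB$, and evaluate at $t^\dagger=T/2$:
\begin{equation*}
\Phi_\lambda(\tfrac{T}{2}u)=\bigl(2^{-2}-2^{-N}\bigr)T^2-\lambda\,2^{-q}T^qA ,
\end{equation*}
which is positive exactly when $\lambda<(2^{q-2}-2^{q-N})\,T^{2-q}/A$. It remains to bound $T^{2-q}/A$ from below purely in terms of the data. Hölder gives $A\le V(M)^{1-q/N}\|u\|_N^q$; the Sobolev inequality $\|u\|_N^2\le C_S\,\|u\|_{H_2^2(M)}^2$ with $C_S:=\max((1+\varepsilon)K(n,2),A_\varepsilon)$, together with coercivity $\|u\|_{H_2^2(M)}^2\le\Lambda^{-1}\|u\|^2=\Lambda^{-1}$, gives $\|u\|_N^2\le C_S/\Lambda$ and hence $B\le(\max_M f)(C_S/\Lambda)^{N/2}$, so that $T^{2-q}\ge(\max_M f)^{-\frac{2-q}{N-2}}(C_S/\Lambda)^{-\frac{N(2-q)}{2(N-2)}}$. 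Substituting and using the identity $\tfrac q2+\tfrac{N(2-q)}{2(N-2)}=\tfrac{N-q}{N-2}$ collapses everything to
\begin{equation*}
\frac{T^{2-q}}{A}\ \ge\ \frac{\Lambda^{\frac{N-q}{N-2}}}{V(M)^{1-q/N}\,(\max_M f)^{\frac{2-q}{N-2}}\,C_S^{\frac{N-q}{N-2}}},
\end{equation*}
the right-hand side multiplied by $(2^{q-2}-2^{q-N})$ being precisely $\lambda_\circ$. Thus $\lambda<\lambda_\circ$ forces $\Phi_\lambda(\tfrac{T}{2}u)>0$.

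Finally I assemble the pieces: $\Phi_\lambda(tu)<0$ near $0$ and near $\infty$ while $\Phi_\lambda(\tfrac{T}{2}u)>0$, so by continuity there is $t_0\in(\tfrac{T}{2},\infty)$ with $\Phi_\lambda(t_0u)=0$; moreover $t_0>\tfrac{T}{2}=\tfrac{1}{2}B^{-1/(N-2)}\ge\tfrac{1}{2}\bigl((\max_M f)(C_S/\Lambda)^{N/2}\bigr)^{-1/(N-2)}>0$, a bound independent of the chosen $u$. Taking $\tau$ no larger than this constant, $u_0=t_0u$ lies in $M_\lambda$, which is therefore nonempty. The only genuinely delicate part is the exponent bookkeeping in the last two displays — forcing the powers of $V(M)$, $\max_M f$, $C_S$ and $\Lambda$ to recombine exactly into the stated $\lambda_\circ$; the rest is the routine Nehari-type analysis of $t\mapsto\Phi_\lambda(tu)$.
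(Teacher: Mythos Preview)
Your argument is correct and follows the paper's fibering strategy: fix a test function, show $\Phi_\lambda(\tfrac{T}{2}u)>0$ via H\"older, Sobolev and coercivity, and apply the intermediate value theorem, with the exponent bookkeeping collapsing to exactly $\lambda_\circ$. The one substantive difference is \emph{which} zero you select. The paper takes the zero $t_1\in(0,t_\circ/2]$ (using negativity near $0$ and positivity at $t_\circ/2$), which yields only an upper bound on $t_1\|u\|$ and leads to the somewhat circular closing line ``take $u$ with $\|u\|\ge\rho/t_1$'' --- circular because $t_1$ itself was determined by $u$. You instead take the zero $t_0\in(T/2,\infty)$ (using positivity at $T/2$ and negativity at $\infty$), which immediately delivers the uniform lower bound $\|t_0u\|=t_0>T/2\ge\tfrac12\bigl((\max_M f)(C_S/\Lambda)^{N/2}\bigr)^{-1/(N-2)}$, independent of $u$; the norm constraint in $M_\lambda$ is then met for any $\tau$ not exceeding this value. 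Your choice of zero makes the lower-bound step clean where the paper's is awkward.
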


\begin{proof}
The proof of Lemma \ref{lem} is the same as in (\cite{8}), but we give it
here for convenience. Let $t>0$ and $u\in H_{2}^{2}(M)-\left\{ 0\right\} $.
Evaluating $\Phi _{\lambda }$ at $tu$, we get 
\begin{equation*}
\Phi _{\lambda }(tu)=t^{2}\left\Vert u\right\Vert ^{2}-\lambda
t^{q}\left\Vert u\right\Vert _{q}^{q}-t^{N}\dint\limits_{M}f(x)\left\vert
u\right\vert ^{N}dv_{g}\text{.}
\end{equation*}%
\newline
Put%
\begin{equation*}
\alpha (t)=\left\Vert u\right\Vert
^{2}-t^{N-2}\dint\limits_{M}f(x)\left\vert u\right\vert ^{N}dv(g)
\end{equation*}%
and%
\begin{equation*}
\beta (t)=\lambda t^{q-2}\left\Vert u\right\Vert _{q}^{q}\text{;}
\end{equation*}%
by Sobolev's inequality, we get%
\begin{equation*}
\alpha (t)\geq \left\Vert u\right\Vert ^{2}-\max_{x\in M}f(x)(\max
((1+\varepsilon )K_{\circ },A_{\varepsilon }))^{\frac{N}{2}}\left\Vert
u\right\Vert _{H_{2}^{2}(M)}^{N}t^{N-2}\text{.}
\end{equation*}%
By the coercivity of the operator $P_{g}=\Delta _{g}^{2}-div_{g}\left(
a\nabla _{g}\right) +b$ there is a constant $\Lambda >0$ such that 
\begin{equation*}
\alpha (t)\geq \left\Vert u\right\Vert ^{2}-\Lambda ^{-\frac{N}{2}%
}\max_{x\in M}f(x)(\max ((1+\varepsilon )K_{\circ },A_{\varepsilon }))^{%
\frac{N}{2}}\left\Vert u\right\Vert ^{N}t^{N-2}.
\end{equation*}%
Letting%
\begin{equation*}
\alpha _{1}(t)=\left\Vert u\right\Vert ^{2}-\Lambda ^{-\frac{N}{2}%
}\max_{x\in M}f(x)(\max ((1+\varepsilon )K_{\circ },A_{\varepsilon }))^{%
\frac{N}{2}}\left\Vert u\right\Vert ^{N}t^{N-2}
\end{equation*}%
H\"{o}lder and Sobolev inequalities lead to%
\begin{equation*}
\beta (t)\leq \lambda V(M)^{(1-\frac{q}{N})}(\max ((1+\varepsilon )K_{\circ
},A_{\varepsilon }))^{\frac{q}{2}}\left\Vert u\right\Vert
_{H_{2}^{2}(M)}^{q}t^{q-2}
\end{equation*}%
and the coercivity of $P_{g}$ assures the existence of a constant $\Lambda
>0 $ such that 
\begin{equation*}
\beta (t)\leq \lambda \Lambda ^{-\frac{q}{2}}V(M)^{(1-\frac{q}{N})}(\max
((1+\varepsilon )K_{\circ },A_{\varepsilon }))^{\frac{q}{2}}\left\Vert
u\right\Vert ^{q}t^{q-2}\text{.}
\end{equation*}%
Put%
\begin{equation*}
\beta _{1}(t)=\lambda \Lambda ^{-\frac{q}{2}}V(M)^{(1-\frac{q}{N})}(\max
((1+\varepsilon )K_{\circ },A_{\varepsilon }))^{\frac{q}{2}}\left\Vert
u\right\Vert ^{q}t^{q-2}\text{.}
\end{equation*}%
Let $t_{o}$ such $\alpha _{1}(t_{o})=0$ \ i.e. \ 
\begin{equation*}
t_{\circ \text{ }}=\frac{\Lambda ^{\frac{N}{2(N-2)}}}{\left\Vert
u\right\Vert \left( \max_{x\in M}f(x)\right) ^{\frac{1}{N-2}}(\max
((1+\varepsilon )K_{\circ },A_{\varepsilon }))^{\frac{N}{2(N-2)}}}
\end{equation*}%
Now since $\alpha _{1}(t)$ is a decreasing and a concave function and $\beta
_{1}(t)$ is a decreasing and convex function, \ then%
\begin{equation*}
\min_{t\text{ }\in (0,\text{ }\frac{t_{\circ }}{2}]}\alpha _{1}(t)=\alpha
_{1}(\frac{t_{\circ }}{2})=\left\Vert u\right\Vert ^{2}(1-2^{2-N})>0
\end{equation*}%
and 
\begin{equation*}
\min_{t\text{ }\in (0,\text{ }\frac{t_{\circ }}{2}]}\beta _{1}(t)=\beta _{1}(%
\frac{t_{\circ }}{2})>0
\end{equation*}%
where

\begin{equation*}
\beta _{1}(\frac{t_{\circ }}{2})=\frac{2^{2-q}\lambda V(M)^{(1-\frac{q}{N}%
)}\Lambda ^{\frac{q-N}{N-2}}\left\Vert u\right\Vert ^{2}}{(\max
((1+\varepsilon )K_{\circ },A_{\varepsilon }))^{\frac{q-N}{N-2}}\left(
\max_{x\in M}f(x)\right) ^{\frac{q-2}{N-2}}}\text{.}
\end{equation*}%
Consequently $\Phi _{\lambda }(tu)=0$ with $t$ $\in (0,$ $\frac{t_{\circ }}{2%
}]$ has a solution if 
\begin{equation*}
\min_{t\text{ }\in (0,\text{ }\frac{t_{\circ }}{2}]}\alpha _{1}(t)\geq
\max_{t\text{ }\in (0,\text{ }\frac{t_{\circ }}{2}]}\beta _{1}(t)
\end{equation*}%
that is to say 
\begin{equation*}
\ 0<\lambda <\frac{\left( 2^{q-2}-2^{q-N}\right) \left( \max_{x\in
M}f(x)\right) ^{\frac{q-2}{N-2}}(\max ((1+\varepsilon )K_{\circ
},A_{\varepsilon }))^{\frac{q-N}{N-2}}}{\Lambda ^{\frac{N-q}{N-2}}V(M)^{(1-%
\frac{q}{N})}}=\lambda _{\circ }
\end{equation*}%
Let $t_{1}\in (0,$ $\frac{t_{\circ }}{2}]$ such that $\Phi _{\lambda
}(t_{1}u)=0$. If we take $u\in H_{2}^{2}\left( M\right) $ such that $%
\left\Vert u\right\Vert \geq \frac{\rho }{t_{1}}$ and $v=t_{1}u$ we get $%
\Phi _{\lambda }(v)=0$ and $\left\Vert v\right\Vert =t_{1}\left\Vert
u\right\Vert \geq \rho $ i.e. $v\in M_{\lambda }$ provided that $\lambda $ $%
\in (0,$ $\lambda _{\circ })$.
\end{proof}

\section{Geometric conditions of $J_{\protect\lambda }$}

The following lemmas whose proofs are the same as in \cite{8} will be useful.

\begin{lemma}
\label{lem1} Let $(M,g)$ be a Riemannian compact manifold of dimension $%
n\geq 5$. For all $u\in M_{\lambda }$ and all $\lambda \in \left( 0,\min
\left( \lambda _{\circ },\lambda _{1}\right) \right) $ there is $A$ $>$ $0$
such that $J_{\lambda }(u)\geq A>0$ where 
\begin{equation*}
\lambda _{1}=\frac{\frac{\left( N-2\right) q}{2\left( N-q\right) }\Lambda ^{%
\frac{q}{2}}}{V(M)^{1-\frac{q}{N}}(\max ((1+\varepsilon
)K(n,2),A_{\varepsilon }))^{\frac{q}{2}}\tau ^{q-2}}\text{.}
\end{equation*}
\end{lemma}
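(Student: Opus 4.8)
The plan is to show that on the constraint set $M_\lambda$ the functional $J_\lambda$ is bounded below by a positive constant, using the defining equation $\Phi_\lambda(u)=0$ to eliminate the critical term and then exploiting the fact that $\|u\|\geq\tau>0$. First I would use $\Phi_\lambda(u)=0$, i.e.
\begin{equation*}
\|u\|^2=\lambda\int_M|u|^q\,dv_g+\int_M f(x)|u|^N\,dv_g,
\end{equation*}
to substitute for $\int_M f|u|^N\,dv_g$ in the expression for $J_\lambda(u)$. This gives
\begin{equation*}
J_\lambda(u)=\left(\frac12-\frac1N\right)\|u\|^2-\lambda\left(\frac1q-\frac1N\right)\int_M|u|^q\,dv_g,
\end{equation*}
so that the critical (and hence uncontrolled) term disappears and only the subcritical integral $\int_M|u|^q$ remains to be estimated from above.

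Next I would bound $\int_M|u|^q\,dv_g$ by Hölder's inequality against the constant function to get $\int_M|u|^q\,dv_g\leq V(M)^{1-q/N}\|u\|_N^q$, then apply the Sobolev inequality $\|u\|_N^2\leq \max((1+\varepsilon)K(n,2),A_\varepsilon)\|u\|_{H_2^2(M)}^2$ and finally the coercivity of $P_g$, $\|u\|_{H_2^2(M)}^2\leq \Lambda^{-1}\|u\|^2$, to obtain
\begin{equation*}
\int_M|u|^q\,dv_g\leq V(M)^{1-q/N}\bigl(\max((1+\varepsilon)K(n,2),A_\varepsilon)\bigr)^{q/2}\Lambda^{-q/2}\|u\|^q.
\end{equation*}
Plugging this in yields
\begin{equation*}
J_\lambda(u)\geq \frac{N-2}{2N}\|u\|^2-\lambda\,\frac{N-q}{qN}\,C\,\|u\|^q,
\end{equation*}
with $C=V(M)^{1-q/N}(\max((1+\varepsilon)K(n,2),A_\varepsilon))^{q/2}\Lambda^{-q/2}$. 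Since $1<q<2$, the factor $\|u\|^q=\|u\|^{q-2}\|u\|^2$ with $\|u\|\geq\tau$ gives $\|u\|^q\leq \tau^{q-2}\|u\|^2$ is the wrong direction; instead I would write $J_\lambda(u)\geq \|u\|^q\bigl(\tfrac{N-2}{2N}\|u\|^{2-q}-\lambda\tfrac{N-q}{qN}C\bigr)$ and use $\|u\|^{2-q}\geq\tau^{2-q}$ to conclude $J_\lambda(u)\geq \tau^q\bigl(\tfrac{N-2}{2N}\tau^{2-q}-\lambda\tfrac{N-q}{qN}C\bigr)$.

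The main point is then simply to check that the bracketed quantity is positive precisely when $\lambda<\lambda_1$: solving $\frac{N-2}{2N}\tau^{2-q}>\lambda\frac{N-q}{qN}C$ for $\lambda$ gives exactly the stated threshold
\begin{equation*}
\lambda_1=\frac{\tfrac{(N-2)q}{2(N-q)}\Lambda^{q/2}}{V(M)^{1-q/N}(\max((1+\varepsilon)K(n,2),A_\varepsilon))^{q/2}\tau^{q-2}},
\end{equation*}
after which one sets $A=\tau^q\bigl(\tfrac{N-2}{2N}\tau^{2-q}-\lambda\tfrac{N-q}{qN}C\bigr)>0$. I do not expect a genuine obstacle here; the only care needed is bookkeeping of the constant $C$ and the exponents of $\Lambda$, $V(M)$, and the Sobolev constant so that they match the formula for $\lambda_1$, and making sure the inequality $\|u\|\geq\tau$ is used in the direction that helps (lower bound on $\|u\|^{2-q}$ since $2-q>0$). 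The hypothesis $\lambda<\lambda_\circ$ is only needed to guarantee via Lemma \ref{lem} that $M_\lambda$ is nonempty, so that the statement is not vacuous.
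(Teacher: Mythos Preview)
Your argument is correct and matches the paper's approach (the computation is reproduced in the proof of Lemma~\ref{lem3}): eliminate the critical term via $\Phi_\lambda(u)=0$, bound $\int_M|u|^q$ by H\"older, Sobolev, and coercivity, then use $\|u\|\geq\tau$. One remark: the inequality you discarded as ``the wrong direction'' is in fact the right one---since $q-2<0$ and $\|u\|\geq\tau$, we have $\|u\|^{q-2}\leq\tau^{q-2}$, hence $\|u\|^q\leq\tau^{q-2}\|u\|^2$ is an \emph{upper} bound on the subtracted term, and this is exactly how the paper factors the estimate as $J_\lambda(u)\geq\|u\|^2\bigl(\tfrac{N-2}{2N}-\lambda\tfrac{N-q}{Nq}C\tau^{q-2}\bigr)$; your alternative factoring out of $\|u\|^q$ yields the same lower bound and the same threshold $\lambda_1$.
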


\begin{lemma}
\label{lem2} Let $(M,g)$ be a Riemannian compact manifold of dimension $%
n\geq 5$. The following assertions are true:

(i)$\ \left\langle \nabla \Phi _{\lambda }(u),u\right\rangle <0$ for all $%
u\in M_{\lambda }$ and for all $\lambda \in (0,\min (\lambda _{\circ
},\lambda _{1})).$

(ii)\ The critical points of $J_{\lambda }$ are points of $M_{\lambda }.$
\end{lemma}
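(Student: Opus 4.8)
\textbf{Proof strategy for Lemma \ref{lem2}.}

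\emph{Part (i).} The plan is to compute $\langle \nabla \Phi_\lambda(u), u\rangle$ explicitly and then use the constraint $\Phi_\lambda(u)=0$ together with the lower bound $J_\lambda(u)\geq A>0$ from Lemma \ref{lem1} to force the sign. For $u\in M_\lambda$ one has
\begin{equation*}
\langle \nabla \Phi_\lambda(u), u\rangle = 2\|u\|^2 - q\lambda \|u\|_q^q - N\int_M f(x)|u|^N\, dv_g .
\end{equation*}
Since $\Phi_\lambda(u)=0$ gives $\int_M f(x)|u|^N\, dv_g = \|u\|^2 - \lambda\|u\|_q^q$, I would substitute to eliminate the critical term, obtaining
\begin{equation*}
\langle \nabla \Phi_\lambda(u), u\rangle = (2-N)\|u\|^2 + (N-q)\lambda \|u\|_q^q .
\end{equation*}
So the statement $\langle \nabla \Phi_\lambda(u), u\rangle <0$ is equivalent to $(N-q)\lambda\|u\|_q^q < (N-2)\|u\|^2$. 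This is exactly the kind of estimate controlled in Lemma \ref{lem1}: using H\"older and Sobolev, $\|u\|_q^q \leq V(M)^{1-q/N}(\max((1+\varepsilon)K(n,2),A_\varepsilon))^{q/2}\Lambda^{-q/2}\|u\|^q$, and since $u\in M_\lambda$ forces $\|u\|\geq \tau$, one gets $\|u\|_q^q \leq C\|u\|^2$ with a constant that becomes admissible precisely when $\lambda < \lambda_1$, where $\lambda_1$ is the threshold already written in Lemma \ref{lem1}. I expect the bookkeeping to match the definition of $\lambda_1$ up to the factor $\frac{(N-2)q}{2(N-q)}$ appearing there; the main point is that for $\lambda \in (0,\min(\lambda_\circ,\lambda_1))$ the inequality is strict.

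\emph{Part (ii).} Here the goal is to show every critical point of the free functional $J_\lambda$ on $H_2^2(M)$ that is nontrivial actually lies in $M_\lambda$. If $u$ is a critical point of $J_\lambda$, then $\nabla J_\lambda(u)=0$, hence in particular $\langle \nabla J_\lambda(u),u\rangle = \Phi_\lambda(u) = 0$, so the first defining condition of $M_\lambda$ holds automatically. It remains to check the norm condition $\|u\|\geq \tau$, i.e. that a nonzero critical point cannot have arbitrarily small norm. For this I would argue as follows: from $\Phi_\lambda(u)=0$ we have $\|u\|^2 = \lambda\|u\|_q^q + \int_M f|u|^N\,dv_g$; bounding the right side via H\"older, Sobolev and coercivity gives $\|u\|^2 \leq \lambda C_1 \|u\|^q + C_2\|u\|^N$ with $1<q<2<N$. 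Dividing by $\|u\|^2$ (legitimate since $u\neq 0$) yields $1 \leq \lambda C_1\|u\|^{q-2} + C_2\|u\|^{N-2}$. Since $q-2<0$, if $\|u\|$ were small the first term would be large unless $\lambda$ compensates, and the second term tends to $0$; a short analysis of the function $t\mapsto \lambda C_1 t^{q-2}+C_2 t^{N-2}$ shows it stays bounded below by $1$ only outside a neighborhood of $0$, which pins down a uniform lower bound $\tau>0$ on $\|u\|$ depending only on $\lambda, C_1, C_2$. Hence $u\in M_\lambda$. (One also notes $u\equiv 0$ is excluded from consideration since $M_\lambda$ is defined for nontrivial solutions and the interest is in nontrivial critical points.)

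\emph{Main obstacle.} The delicate part is not Part (i)—that is essentially an algebraic identity plus the Sobolev bound already packaged into $\lambda_1$—but making Part (ii) fully rigorous: one must verify that the lower-bound constant $\tau$ obtained from the inequality $1\leq \lambda C_1\|u\|^{q-2}+C_2\|u\|^{N-2}$ is consistent with (i.e. no larger than) the $\tau$ used in the definition of $M_\lambda$, and that the range of $\lambda$ for which this works contains $(0,\min(\lambda_\circ,\lambda_1))$. Since the paper says "the proofs are the same as in \cite{8}," I would follow that reference for the precise choice of constants and simply reproduce the computation above with the bookkeeping aligned to the already-fixed values of $\tau$, $\lambda_\circ$, $\lambda_1$.
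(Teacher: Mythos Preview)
Your argument for Part~(i) is correct and is the standard one: the identity
\[
\langle\nabla\Phi_\lambda(u),u\rangle=(2-N)\|u\|^2+(N-q)\lambda\|u\|_q^q
\]
together with the H\"older--Sobolev--coercivity bound and $\|u\|\ge\tau$ gives the sign for $\lambda<\lambda_1$. One can streamline this slightly by observing that on $M_\lambda$
\[
\langle\nabla\Phi_\lambda(u),u\rangle=-2N\,J_\lambda(u)-\frac{(N-q)(2-q)}{q}\,\lambda\|u\|_q^q,
\]
so that $J_\lambda(u)>0$ from Lemma~\ref{lem1} already forces negativity; but this is equivalent to what you wrote.

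Part~(ii), however, contains a genuine error. Your inequality $1\le \lambda C_1\|u\|^{q-2}+C_2\|u\|^{N-2}$ is correct, but your analysis of $g(t)=\lambda C_1 t^{q-2}+C_2 t^{N-2}$ is backwards. Since $1<q<2$ one has $q-2<0$, so $g(t)\to+\infty$ as $t\to 0^+$; the set $\{t>0:g(t)\ge1\}$ therefore \emph{contains} a neighbourhood of $0$, and for small $\lambda$ it is of the form $(0,t_1]\cup[t_2,\infty)$ rather than the complement of a neighbourhood of $0$. Consequently the inequality does \emph{not} yield a lower bound on $\|u\|$, and the argument as written cannot place a nontrivial critical point above the threshold $\tau$. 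You have in effect confused the sublinear regime $1<q<2$ with the superlinear one $q>2$, where $t^{q-2}\to0$ and your reasoning would go through.

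To repair Part~(ii) you need a separate mechanism to exclude the small branch $(0,t_1]$; the algebraic inequality derived from $\Phi_\lambda(u)=0$ alone is insufficient. Since the paper defers entirely to \cite{8} for this proof, you should consult that reference for the additional input (typically an energy comparison, or the way $\tau$ is tied to $\lambda$) that rules out small-norm critical points.
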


\section{Existence of non trivial solution in $M_{\protect\lambda }$}

In this section, first we show that $J_{\lambda }$ satisfies the
Palais-Smale condition on $M_{\lambda }$ provided that $\lambda >0$ is
sufficiently small.

\begin{lemma}
\label{lem3} Let $(M,g)$ be a compact Riemannian manifold of dimension $%
n\geq 5$. Let $\left( u_{m}\right) _{m}$ be a sequence in $M_{\lambda }$
such that 
\begin{equation*}
\left\{ 
\begin{array}{c}
J_{\lambda }(u_{m})\leq c \\ 
\nabla J_{\lambda }(u_{m})-\mu _{m}\nabla \Phi _{\lambda }(u_{m})\rightarrow
0%
\end{array}%
\text{.}\right.
\end{equation*}%
Suppose that 
\begin{equation*}
c<\frac{2}{n\text{ }K_{o}^{\frac{n}{4}}(f(x_{\circ }))^{\frac{n-4}{4}}}
\end{equation*}%
then there is a subsequence $\left( u_{m}\right) _{m}$ converging strongly
in $H_{2}^{2}(M)$.
\end{lemma}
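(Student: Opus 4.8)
The plan is to follow the standard Palais--Smale analysis for subcritical-plus-critical functionals adapted to the singular lower-order terms. First I would show the sequence $(u_m)_m$ is bounded in $H_2^2(M)$. Combining $J_\lambda(u_m)\le c$ with the relation $\langle\nabla J_\lambda(u_m),u_m\rangle = \mu_m\langle\nabla\Phi_\lambda(u_m),u_m\rangle + o(\|u_m\|)$ and the fact that $u_m\in M_\lambda$ (so $\Phi_\lambda(u_m)=0$), one forms the combination $J_\lambda(u_m)-\frac1N\langle\nabla J_\lambda(u_m),u_m\rangle$, which kills the top-order critical term and leaves $\bigl(\frac12-\frac1N\bigr)\|u_m\|^2 - \lambda\bigl(\frac1q-\frac1N\bigr)\|u_m\|_q^q$ plus controlled error; since $1<q<2<N$, H\"older and the coercivity-equivalence of $\|\cdot\|$ (Proposition on $P_g$ coercive) give boundedness. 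One must also check $\mu_m$ stays bounded, using Lemma \ref{lem2}(i), namely $\langle\nabla\Phi_\lambda(u_m),u_m\rangle<0$ bounded away from $0$ on $M_\lambda$.

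Next, extract a weak limit $u_m\rightharpoonup u$ in $H_2^2(M)$, with strong convergence in $L^p$ for every $p<N$ (Rellich--Kondrakov) and a.e. convergence. The singular terms $\int a|\nabla_g u_m|^2$ and $\int b\,u_m^2$ converge to the corresponding integrals with $u$: indeed $a\in L^r$, $r>n/2$, and $\nabla_g u_m\to\nabla_g u$ in $L^2$ while $|\nabla_g u_m|^2$ is bounded in $L^{n/(n-2)}$, so $a|\nabla_g u_m|^2\to a|\nabla_g u|^2$ in $L^1$ by a Vitali/uniform-integrability argument (the exponent gap $r>n/2$ is exactly what makes the product uniformly integrable); similarly for the $b$-term with $s>n/4$. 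Hence $u$ is a weak solution and $\Phi_\lambda(u)=0$ (or $u\equiv 0$).

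Then set $v_m=u_m-u$. By Brezis--Lieb, $\int f|u_m|^N = \int f|u|^N + \int f|v_m|^N + o(1)$, and by the weak-convergence structure $\|u_m\|^2 = \|u\|^2 + \|v_m\|_{D}^2 + o(1)$ where $\|v_m\|_D^2=\int(\Delta_g v_m)^2$ (the lower-order pieces pass to the $u$-limit as above, so only the $\Delta^2$-part contributes to the residual norm). Feeding these splittings into $\langle\nabla J_\lambda(u_m),u_m\rangle=o(1)$ and the weak equation for $u$ yields $\|v_m\|_D^2 - \int f|v_m|^N\to 0$; call the common limit $\ell\ge 0$. Using the sharp Sobolev inequality $\|v_m\|_N^2 \le (K_o+o(1))\|v_m\|_{H_2^2}^2$ refined to $\int f|v_m|^N \le K_o^{N/2}(f(x_\circ))^{(N-2)/2}\|v_m\|_D^N + o(1)$ (concentration forces $f$ to be evaluated at its max point $x_\circ$), one gets $\ell \le K_o^{N/2}(f(x_\circ))^{(N-2)/2}\ell^{N/2}$, so either $\ell=0$ or $\ell\ge \bigl(K_o^{N/2}(f(x_\circ))^{(N-2)/2}\bigr)^{-2/(N-2)} = K_o^{-n/4}(f(x_\circ))^{-(n-4)/4}$. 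Finally compute $\lim J_\lambda(u_m) = J_\lambda(u) + \bigl(\frac12-\frac1N\bigr)\ell = J_\lambda(u)+\frac2n\ell$; since $u\in M_\lambda\cup\{0\}$ one has $J_\lambda(u)\ge 0$ (trivially if $u=0$, and by Lemma \ref{lem1} if $u\in M_\lambda$), so $\frac2n\ell \le c < \frac{2}{n K_o^{n/4}(f(x_\circ))^{(n-4)/4}}$ forces $\ell<K_o^{-n/4}(f(x_\circ))^{-(n-4)/4}$, hence $\ell=0$, i.e. $v_m\to 0$ strongly.

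The main obstacle I anticipate is the bookkeeping in the decomposition step: making rigorous that the singular terms $\int a|\nabla_g u_m|^2$ and $\int b\,u_m^2$ genuinely drop out of the residual (so that $\|v_m\|$ reduces to the pure bi-Laplacian seminorm in the limit) and that the Brezis--Lieb splitting of the critical term localizes $f$ at $x_\circ$ with the sharp constant $K_o$; this requires the uniform integrability coming from $r>n/2$, $s>n/4$ together with a careful use of the sharp Sobolev inequality with $\varepsilon\to 0$. Once that is in place, the energy threshold argument is routine.
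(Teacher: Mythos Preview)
Your proposal is correct and follows essentially the same route as the paper: boundedness via the Nehari constraint $\Phi_\lambda(u_m)=0$, weak extraction, Brezis--Lieb splitting, the Sobolev dichotomy $\ell=0$ or $\ell\ge K_o^{-n/4}(f(x_\circ))^{-(n-4)/4}$, and elimination of the second alternative using $J_\lambda(u)\ge 0$ and the energy threshold. One small simplification relative to your outline: no concentration argument is needed to localize $f$ at $x_\circ$---the paper just uses the pointwise bound $f\le f(x_\circ)=\max_M f$ before applying the sharp Sobolev inequality.
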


\begin{proof}
Let $\left( u_{m}\right) _{m}\subset M_{\lambda }$%
\begin{equation*}
J_{\lambda }(u_{m})=\frac{N-2}{2N}\left\Vert u_{m}\right\Vert ^{2}-\lambda 
\frac{N-q}{Nq}\int_{M}\left\vert u_{m}\right\vert ^{q}dv_{g}
\end{equation*}%
As in the proof of Lemma \ref{2}, we have \ 
\begin{equation*}
J_{\lambda }(u_{m})\geq \frac{N-2}{2N}\left\Vert u_{m}\right\Vert
^{2}-\lambda \frac{N-q}{Nq}\Lambda ^{-\frac{q}{2}}V(M)^{1-\frac{q}{N}}(\max
((1+\varepsilon )K_{o},A_{\varepsilon }))^{\frac{q}{2}}\left\Vert
u_{m}\right\Vert ^{q}
\end{equation*}%
\begin{equation*}
J_{\lambda }(u_{m})\geq \left\Vert u_{m}\right\Vert ^{2}(\frac{N-2}{2N}%
-\lambda \frac{N-q}{Nq}\Lambda ^{-\frac{q}{2}}V(M)^{1-\frac{q}{N}}(\max
((1+\varepsilon )K_{o},A_{\varepsilon }))^{\frac{q}{2}}\tau ^{q-2})>0
\end{equation*}%
and since $0<\lambda <\frac{\frac{\left( N-2\right) q}{2\left( N-q\right) }%
\Lambda ^{\frac{q}{2}}}{V(M)^{1-\frac{q}{N}}(\max ((1+\varepsilon
)K(n,2),A_{\varepsilon }))^{\frac{q}{2}}\tau ^{q-2}}$ and $J_{\lambda
}(u_{m})\leq c$, we get 
\begin{equation*}
c\geq J_{\lambda }(u_{m})
\end{equation*}%
\begin{equation*}
\geq \lbrack \frac{N-2}{2N}-\lambda \frac{N-q}{Nq}\Lambda ^{-\frac{q}{2}%
}V(M)^{1-\frac{q}{N}}(\max ((1+\varepsilon )K_{o},A_{\varepsilon }))^{\frac{q%
}{2}}\tau ^{q-2}]\left\Vert u_{m}\right\Vert ^{2}>0
\end{equation*}%
so 
\begin{equation*}
\left\Vert u_{m}\right\Vert ^{2}\leq \frac{c}{\frac{N-2}{2N}-\lambda \frac{%
N-q}{Nq}\Lambda ^{-\frac{q}{2}}V(M)^{1-\frac{q}{N}}(\max ((1+\varepsilon
)K_{o},A_{\varepsilon }))^{\frac{q}{2}}\tau ^{q-2}}<+\infty \text{.}
\end{equation*}%
$\left( u_{m}\right) _{m}$ is a bounded in $H_{2}^{2}(M)$.\ By the
compactness of the embedding $H_{2}^{2}(M)\subset H_{p}^{k}(M)$\ ($k\ =0,1$; 
$p<N$) we get a subsequence still denoted $\left( u_{m}\right) _{m}$ such
that

$u_{m}\rightarrow u$\ \ weakly in $H_{2}^{2}(M)$

$u_{m}\rightarrow u$ strongly in $L^{p}(M)$\ where $p<N$

$\nabla u_{m}\rightarrow \nabla u$ strongly in $L^{p}(M)$\ where $p<2^{\ast
}=\frac{2n}{n-2}$

$u_{m}\rightarrow u$ a.e. in $M.$\newline
On the other hand since $\frac{2s}{s-1}<N=\frac{2n}{n-4}$, we obtain 
\begin{equation*}
\left\vert \int_{M}b(x)\left\vert u_{m}-u\right\vert ^{2}dv_{g}\right\vert
\leq \left\Vert b\right\Vert _{s}\left\Vert u_{m}-u\right\Vert _{\frac{2s}{%
s-1}}^{2}
\end{equation*}%
\begin{equation*}
\leq \left\Vert b\right\Vert _{s}\left( \left( K_{\circ }+\epsilon \right)
\left\Vert \Delta \left( u_{m}-u\right) \right\Vert _{2}^{2}+A_{\epsilon
}\left\Vert u_{m}-u\right\Vert _{2}^{2}\right) \text{.}
\end{equation*}%
Now taking account of 
\begin{equation}
K_{\circ }=\frac{16}{n(n^{2}-4)\left( n-4\right) \omega _{n}^{\frac{n}{4}}}<1
\tag{K}  \label{K}
\end{equation}%
we get%
\begin{equation*}
\int_{M}b(x)\left( u_{m}-u\right) ^{2}dv_{g}\leq \left\Vert b\right\Vert
_{s}\left\Vert \Delta \left( u_{m}-u\right) \right\Vert _{2}^{2}+o\left(
1\right) \text{.}
\end{equation*}

\ By the same procedure as above we get%
\begin{equation*}
\int_{M}a(x)\left\vert \nabla \left( u_{m}-u\right) \right\vert
^{2}dv_{g}\leq \left\Vert a\right\Vert _{r}\left\Vert \Delta \left(
u_{m}-u\right) \right\Vert _{2}^{2}+o\left( 1\right) \text{.}
\end{equation*}%
By Brezis-Lieb lemma we write%
\begin{equation*}
\int_{M}\left( \Delta _{g}u_{m}\right) ^{2}dv_{g}=\int_{M}\left( \Delta
_{g}u\right) ^{2}dv_{g}+\int_{M}\left( \Delta _{g}(u_{m}-u)\right)
^{2}dv_{g}+o(1)
\end{equation*}%
and also 
\begin{equation*}
\int_{M}f(x)\left\vert u_{m}\right\vert ^{N}dv_{g}=\int_{M}f(x)\left\vert
u\right\vert ^{N}dv_{g}+\int_{M}f(x)\left\vert u_{m}-u\right\vert
^{N}dv_{g}+o(1)\text{.}
\end{equation*}%
Now we claim that $\mu _{m}\rightarrow 0$ as $m$ $\rightarrow +\infty $%
\newline
Testing with $u_{m}$ we obtain 
\begin{equation*}
\left\langle \nabla J_{\lambda }(u_{m})-\mu _{m}\nabla \Phi _{\lambda
}(u_{m}),u_{m}\right\rangle =o(1)
\end{equation*}%
\begin{equation*}
=\underset{=0}{\underbrace{\left\langle \nabla J_{\lambda
}(u_{m}),u_{m}\right\rangle }}-\mu _{m}\left\langle \nabla \Phi _{\lambda
}(u_{m}),u_{m}\right\rangle =o(1)
\end{equation*}%
hence 
\begin{equation*}
\mu _{m}\left\langle \nabla \Phi _{\lambda }(u_{m}),u_{m}\right\rangle =o(1)%
\text{.}
\end{equation*}%
By Lemma \ref{lem2}, we get $\lim \sup_{m\newline
}\left\langle \nabla \Phi _{\lambda }(u_{m}),u_{m}\right\rangle <0$ so 
\begin{equation*}
\mu _{m}\rightarrow 0\text{ as\ }m\rightarrow +\infty \text{.}
\end{equation*}%
Our last claim is that $u_{m}\rightarrow u$ strongly in $H_{2}^{2}(M)$,
indeed 
\begin{equation*}
J_{\lambda }(u_{m})-J_{\lambda }(u)
\end{equation*}%
\begin{equation*}
=\frac{1}{2}\int_{M}\left( \Delta _{g}(u_{m}-u)\right) ^{2}dv_{c}-\frac{1}{N}%
\int_{M}f(x)\left\vert u_{m}-u\right\vert ^{N}dv_{g}+o(1)\text{.}
\end{equation*}%
Since $u_{m}-u\rightarrow 0$\ weakly in $H_{2}^{2}(M)$, we test with $\nabla
J_{\lambda }(u_{m})-\nabla J_{\lambda }(u)$ 
\begin{equation*}
\left\langle \nabla J_{\lambda }(u_{m})-\nabla J_{\lambda
}(u),u_{m}-u\right\rangle =o(1)
\end{equation*}%
\begin{equation}
=\int_{M}\left( \Delta _{g}(u_{m}-u)\right)
^{2}dv_{g}-\int_{M}f(x)\left\vert u_{m}-u\right\vert ^{N}dv_{g}=o(1)  \tag{5}
\label{5}
\end{equation}%
and get 
\begin{equation*}
\int_{M}\left( \Delta _{g}(u_{m}-u)\right) ^{2}dv_{g}=\int_{M}f(x)\left\vert
u_{m}-u\right\vert ^{N}dv_{g}+o(1)
\end{equation*}%
and taking account of (\ref{5}) we obtain%
\begin{equation*}
J_{\lambda }(u_{m})-J_{\lambda }(u)=\frac{1}{2}\int_{M}\left( \Delta
_{g}(u_{m}-u)\right) ^{2}dv_{g}-\frac{1}{N}\int_{M}\left( \Delta
_{g}(u_{m}-u)\right) ^{2}dv_{g}+o(1)
\end{equation*}%
$\ $i.e.%
\begin{equation*}
J_{\lambda }(u_{m})-J_{\lambda }(u)=\frac{2}{n}\int_{M}\left( \Delta
_{g}(u_{m}-u)\right) ^{2}dv_{g}+o(1)\text{.}
\end{equation*}%
Independently, by the Sobolev's inequality we have 
\begin{equation}
\left\Vert u_{m}-u\right\Vert _{N}^{2}\leq (1+\varepsilon
)K_{o}\int_{M}\left( \Delta _{g}(u_{m}-u)\right) ^{2}dv_{g}+o(1)\text{.} 
\tag{6}  \label{6}
\end{equation}%
Since%
\begin{equation*}
\int_{M}f(x)\left\vert u_{m}-u\right\vert ^{N}dv_{g}\leq \max_{x\in
M}f(x)\left\Vert u_{m}-u\right\Vert _{N}^{N}
\end{equation*}%
we infer by (\ref{6}) that%
\begin{equation*}
\int_{M}f(x)\left\vert u_{m}-u\right\vert ^{N}dv_{g}\leq (1+\varepsilon )^{%
\frac{n}{n-4}}\max_{x\in M}f(x)K_{\circ }^{\frac{n}{n-4}}\left\Vert \Delta
_{g}(u_{m}-u)\right\Vert _{2}^{N}+o(1)
\end{equation*}%
and appealing equality (\ref{5})%
\begin{equation*}
o(1)\geq \left\Vert \Delta _{g}(u_{m}-u)\right\Vert _{2}^{2}-(1+\varepsilon
)^{\frac{n}{n-4}}\max_{x\in M}f(x)K_{o}^{\frac{n}{n-4}}\left\Vert \Delta
_{g}(u_{m}-u)\right\Vert _{2}^{N}+o(1)
\end{equation*}%
\begin{equation*}
\geq \left\Vert \Delta _{g}(u_{m}-u)\right\Vert _{2}^{2}(1-(1+\varepsilon )^{%
\frac{n}{n-4}}\max_{x\in M}f(x)K_{\circ }^{\frac{n}{n-4}}\left\Vert \Delta
_{g}(u_{m}-u)\right\Vert _{2}^{N-2})+o(1)
\end{equation*}%
so if \ 
\begin{equation}
\limsup_{\newline
m\rightarrow +\infty }\left\Vert \Delta _{g}(u_{m}-u)\right\Vert _{2}^{2}<%
\frac{1}{K_{\circ }^{\frac{n}{4}}\left( \max_{x\in M}f(x)\right) ^{\frac{n}{4%
}-1}}  \tag{8}  \label{8}
\end{equation}%
then $u_{m}\rightarrow u$ strongly in $H_{2}^{2}\left( M\right) $. The
condition (\ref{8}) is fulfilled since by Lemma \ref{lem1} $J_{\lambda
}(u)>0 $ on $M_{\lambda }$ with $\lambda $ is as in Lemma \ref{lem1} and by
hypothesis%
\begin{equation*}
c\geq J_{\lambda }\left( u_{m}\right) >\left( J_{\lambda }\left(
u_{m}\right) -J_{\lambda }\left( u\right) \right) =\frac{2}{n}\int_{M}\left(
\Delta _{g}(u_{m}-u)\right) ^{2}dv_{g}
\end{equation*}%
and 
\begin{equation*}
c<\ \frac{2}{n\text{ }K_{\circ }^{\frac{n}{4}}(\max_{x\in M}f(x))^{\frac{n}{4%
}-1}}.
\end{equation*}%
It is obvious that 
\begin{equation*}
\Phi _{\lambda }(u)=0\text{ \ and }\left\Vert u\right\Vert \geq \tau
\end{equation*}%
i.e. $u\in M_{\lambda }$.
\end{proof}

Now we show the existence of a sequence in $M_{\lambda }$ satisfying the
conditions of Palais-Smale.

\begin{lemma}
\label{lem4} Let $(M,g)$ be a compact Riemannian manifold of dimension $%
n\geq 5$, then there is a couple $\left( u_{m},\mu _{m}\right) \in
M_{\lambda }\times R$ such that $\nabla J_{\lambda }(u_{m})-\mu _{m}\nabla
\Phi _{\lambda }(u_{m})\rightarrow 0$ strongly in $(H_{2}^{2}(M))^{\ast }$
and $J_{\lambda }\left( u_{m}\right) $ is bounded provide that $\lambda \in
\left( 0,\lambda _{\ast }\right) $ with $\lambda _{\ast }=\left\{ \min
(\lambda _{\circ },\lambda _{1}),0\right\} $.
\end{lemma}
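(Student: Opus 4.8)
The plan is to realise the wanted sequence as an Ekeland minimizing sequence for $J_\lambda$ on the Nehari-type constraint $M_\lambda$, and to convert the metric Ekeland inequality into a Lagrange-multiplier identity; Lemmas \ref{lem}, \ref{lem1} and \ref{lem2} supply exactly the ingredients needed.

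Fix $\lambda\in(0,\lambda_\ast)$ with $\lambda_\ast=\min(\lambda_\circ,\lambda_1)$, so that $M_\lambda\neq\emptyset$ by Lemma \ref{lem} and $J_\lambda\ge A>0$ on $M_\lambda$ by Lemma \ref{lem1}. Put $c_\lambda:=\inf_{M_\lambda}J_\lambda$; since $J_\lambda\in C^1(H_2^2(M),\mathbb R)$ (all its terms, including the critical one $\int_M f|u|^N$, are $C^1$ thanks to $H_2^2(M)\hookrightarrow L^q(M)\cap L^N(M)$) it is finite at any point of $M_\lambda$, hence $c_\lambda\in[A,+\infty)$. Moreover $M_\lambda$ is closed in $H_2^2(M)$: $\Phi_\lambda$ is continuous and, by the Proposition, $u\mapsto\|u\|$ is (an equivalent norm, hence) continuous, so $M_\lambda=\Phi_\lambda^{-1}(0)\cap\{\|u\|\ge\tau\}$ is closed and thus a complete metric space for the distance induced by $\|\cdot\|$.

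Apply Ekeland's variational principle to $J_\lambda$ on $M_\lambda$: one gets $(u_m)\subset M_\lambda$ with $J_\lambda(u_m)\to c_\lambda$ (in particular $J_\lambda(u_m)$ bounded) and $J_\lambda(v)\ge J_\lambda(u_m)-\frac1m\|v-u_m\|$ for every $v\in M_\lambda$. The computation in the proof of Lemma \ref{lem3}, using $J_\lambda(u_m)\le c_\lambda+1$ eventually and the coercivity of $P_g$, shows $(u_m)$ is bounded in $H_2^2(M)$. By Lemma \ref{lem2}(i), $\langle\nabla\Phi_\lambda(u_m),u_m\rangle<0$, so $\nabla\Phi_\lambda(u_m)\neq 0$: $0$ is a regular value of $\Phi_\lambda$ along the sequence and near each $u_m$ the level set $\Phi_\lambda^{-1}(0)$ is a $C^1$ hypersurface with tangent space $\ker d\Phi_\lambda(u_m)$; one also checks, combining $\Phi_\lambda(u)=0$, $\langle\nabla\Phi_\lambda(u),u\rangle<0$ and Sobolev, that the constraint $\|u\|\ge\tau$ is inactive, so each $u_m$ lies in the interior of $M_\lambda$. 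For $h\in H_2^2(M)$ set $P_mh=h-\|\nabla\Phi_\lambda(u_m)\|^{-2}\langle\nabla\Phi_\lambda(u_m),h\rangle\,\nabla\Phi_\lambda(u_m)\in\ker d\Phi_\lambda(u_m)$, and use the implicit function theorem to produce a $C^1$ curve $t\mapsto\gamma_m(t)\in M_\lambda$ with $\gamma_m(0)=u_m$, $\gamma_m'(0)=P_mh$ and $\|\gamma_m(t)-u_m\|\le C|t|\,\|h\|$, the constant $C$ being uniform in $m$ because $(u_m)$ is bounded and $\|\nabla\Phi_\lambda(u_m)\|$ is bounded below. Inserting $\gamma_m(\pm t)$ into the Ekeland inequality, dividing by $t>0$ and letting $t\to0^+$ gives $|\langle\nabla J_\lambda(u_m),P_mh\rangle|\le Cm^{-1}\|h\|$ for all $h$, that is
\[
\big\|\nabla J_\lambda(u_m)-\mu_m\nabla\Phi_\lambda(u_m)\big\|_{(H_2^2(M))^\ast}\longrightarrow 0,\qquad
\mu_m:=\frac{\langle\nabla J_\lambda(u_m),\nabla\Phi_\lambda(u_m)\rangle}{\|\nabla\Phi_\lambda(u_m)\|^2}\in\mathbb R,
\]
which is the announced couple $(u_m,\mu_m)\in M_\lambda\times\mathbb R$.

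The main obstacle is exactly this last conversion step: one must manufacture feasible curves $\gamma_m$ on the constraint with derivative estimates that are \emph{uniform} in $m$, and this needs, simultaneously, that $(u_m)$ is bounded, that $\|\nabla\Phi_\lambda(u_m)\|$ (equivalently $|\langle\nabla\Phi_\lambda(u_m),u_m\rangle|$) stays bounded away from zero, and that the minimizing sequence does not escape to the boundary $\{\|u\|=\tau\}$ of $M_\lambda$ — all of which follow from Lemmas \ref{lem1} and \ref{lem2} combined with the Sobolev embedding. Once these uniform bounds are secured the conclusion is routine, the boundedness of $J_\lambda(u_m)$ being immediate since $J_\lambda(u_m)\to c_\lambda$.
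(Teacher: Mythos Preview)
Your argument is correct and follows the same route as the paper: both invoke Ekeland's variational principle on the constraint set $M_\lambda$, using Lemma~\ref{lem1} for the lower bound and Lemma~\ref{lem2} for the regularity of the constraint. The paper's proof is in fact a one-line appeal to Ekeland's principle, while you have carefully spelled out the passage from the metric Ekeland inequality to the Lagrange-multiplier form (closedness of $M_\lambda$, the implicit-function-theorem construction of feasible curves, and the uniform bounds needed for it), details the paper leaves implicit.
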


\begin{proof}
Since $J_{\lambda }$ is Gateau differentiable and by Lemma \ref{1} bounded
below on $M_{\lambda }$ it follows from Ekeland's principle \ that there is
a couple $(u_{m},$ $\mu _{m})$ $\in M_{\lambda }\times R$ such that $\nabla
J_{\lambda }(u_{m})-\mu _{m}\nabla \Phi _{\lambda }(u_{m})\rightarrow 0$
strongly in $(H_{2}^{2}(M))^{^{\prime }}$ and $J_{\lambda }\left(
u_{m}\right) $ is bounded i.e. $(u_{m},$ $\mu _{m})_{m}$ is a Palais-Smale
sequence on $M_{\lambda }$.
\end{proof}

Now we are in position to establish the following generic existence result.

\begin{theorem}
\label{thm3} Let\ $(M,g)$ be a compact Riemannian manifold of dimension $%
n\geq 5$ and $f$ a positive function. Suppose that $P_{g}$ is coercive and 
\begin{equation}
c<\frac{2}{n\text{ }K_{o}^{\frac{n}{4}}(f(x_{\circ }))^{\frac{n-4}{4}}}\text{%
.}  \tag{C1}  \label{C1}
\end{equation}%
Then there is $\lambda ^{\ast }>0$ such that for any $\lambda \in (0,$ $%
\lambda ^{\ast })$, the equation (\ref{2}) has a non trivial weak solution.
\end{theorem}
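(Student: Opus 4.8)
The plan is to obtain the solution as a minimizer of $J_{\lambda }$ over the Nehari-type set $M_{\lambda }$ and then to show this minimizer is in fact a free critical point of $J_{\lambda }$. First I would fix $\lambda ^{\ast }=\min (\lambda _{\circ },\lambda _{1})>0$, so that Lemmas \ref{lem}, \ref{lem1}, \ref{lem2} and \ref{lem4} are all available for $\lambda \in (0,\lambda ^{\ast })$. By Lemma \ref{lem} the set $M_{\lambda }$ is nonempty, and by Lemma \ref{lem1} one has $J_{\lambda }\geq A>0$ on $M_{\lambda }$; hence $c:=\inf_{u\in M_{\lambda }}J_{\lambda }(u)$ is a well-defined positive number, which by hypothesis (\ref{C1}) satisfies $c<\frac{2}{n\,K_{o}^{n/4}(f(x_{\circ }))^{(n-4)/4}}$.

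Next I would produce a minimizing Palais--Smale sequence. By Lemma \ref{lem4} (Ekeland's variational principle applied to the Gateaux-differentiable, lower-bounded functional $J_{\lambda }$ on $M_{\lambda }$) there is a couple $(u_{m},\mu _{m})\in M_{\lambda }\times \mathbb{R}$ with $J_{\lambda }(u_{m})\to c$ and $\nabla J_{\lambda }(u_{m})-\mu _{m}\nabla \Phi _{\lambda }(u_{m})\to 0$ in $(H_{2}^{2}(M))^{\ast }$. Since $c$ lies strictly below the compactness threshold, Lemma \ref{lem3} applies and yields a subsequence, still denoted $(u_{m})$, converging strongly to some $u$ in $H_{2}^{2}(M)$; as recorded in the proof of that lemma one also has $\mu _{m}\to 0$ and $u\in M_{\lambda }$, in particular $\left\Vert u\right\Vert \geq \tau >0$, so $u\not\equiv 0$.

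Finally I would pass to the limit. Strong convergence $u_{m}\to u$ in $H_{2}^{2}(M)$ together with $\mu _{m}\to 0$ and the continuity of $\nabla J_{\lambda }$ and $\nabla \Phi _{\lambda }$ gives $\nabla J_{\lambda }(u)=0$ in $(H_{2}^{2}(M))^{\ast }$; equivalently, since $u\in M_{\lambda }$, testing $\nabla J_{\lambda }(u)-\mu \nabla \Phi _{\lambda }(u)=0$ against $u$, using $\left\langle \nabla J_{\lambda }(u),u\right\rangle =\Phi _{\lambda }(u)=0$ and the strict inequality $\left\langle \nabla \Phi _{\lambda }(u),u\right\rangle <0$ from Lemma \ref{lem2}(i), forces the multiplier to vanish. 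In either case $u$ is a critical point of $J_{\lambda }$, hence a weak solution of (\ref{2}), and it is nontrivial because $\left\Vert u\right\Vert \geq \tau $. I expect the only delicate point to be the strong convergence of the Palais--Smale sequence, i.e. ruling out energy concentration at the critical exponent $N=\frac{2n}{n-4}$; but this is exactly what Lemma \ref{lem3} guarantees below the level $\frac{2}{n\,K_{o}^{n/4}(f(x_{\circ }))^{(n-4)/4}}$, so here it is invoked rather than re-derived.
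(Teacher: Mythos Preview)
Your proposal is correct and follows essentially the same route as the paper: produce a minimizing Palais--Smale sequence on $M_{\lambda}$ via Ekeland's principle (Lemma \ref{lem4}), upgrade to strong convergence below the threshold using Lemma \ref{lem3}, and then kill the Lagrange multiplier by testing against $u$ and invoking Lemma \ref{lem2}(i). The only cosmetic difference is that you also note the direct route $\nabla J_{\lambda}(u_{m})-\mu_{m}\nabla \Phi_{\lambda}(u_{m})\to 0$, $\mu_{m}\to 0$, strong convergence $\Rightarrow \nabla J_{\lambda}(u)=0$, whereas the paper passes first to the minimizer and then applies the Lagrange multiplier theorem; both arrive at the same conclusion.
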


\begin{proof}
By Lemma \ref{lem3} and \ref{lem4} there is $u\in H_{2}^{2}\left( M\right) $
such that 
\begin{equation*}
J_{\lambda }(u)=\min_{\varphi \in M_{\lambda }}J_{\lambda }(\varphi )\text{.}
\end{equation*}%
$\newline
$By Lagrange multiplicative theorem there is a real number $\mu $ such that
for any $\varphi \in H_{2}^{2}\left( M\right) $%
\begin{equation}
\left\langle \nabla J_{\lambda }(u),\varphi \right\rangle =\mu \left\langle
\nabla \Phi _{\lambda }(u),\varphi \right\rangle  \tag{9}  \label{9}
\end{equation}%
and letting $\varphi =u$ in the equation (\ref{9}), we get 
\begin{equation*}
\Phi _{\lambda }(u)=\left\langle \nabla J_{\lambda }(u),u\right\rangle =\mu
\left\langle \nabla \Phi _{\lambda }(u),u\right\rangle \text{.}
\end{equation*}%
By Lemma \ref{lem2} we get that $\mu =0$ and by equation (\ref{9}), we infer
that for any $\varphi \in H_{2}^{2}\left( M\right) $ 
\begin{equation*}
\left\langle \nabla J_{\lambda }(u),\varphi \right\rangle =0
\end{equation*}%
hence $u$ is weak non trivial solution to equation (\ref{2}) and since by
Lemma \ref{2} critical points of $J_{\lambda }$, we conclude that $u\in
M_{\lambda }$.
\end{proof}

\section{Application}

Let $P\in M$, we define a function on $M$ by

\begin{equation}
\rho _{_{P}}(Q)=\left\{ 
\begin{array}{c}
d(P,Q)\text{ if }d(P,Q)<\delta (M) \\ 
\delta (M)\text{ if }d(P,Q)\geq \delta (M)%
\end{array}%
\right.  \tag{10}  \label{10}
\end{equation}%
where $\delta (M)$ is the injectivity radius of $M$. For brevity we denote
this function by $\rho $. The weighted $L^{p}\left( M,\rho ^{\gamma }\right) 
$ space will be the set of measurable functions $u$ on $M$ such that $\rho
^{\gamma }\left\vert u\right\vert ^{p}$ are integrable where $p\geq 1$. We
endow $L^{p}\left( M,\rho ^{\gamma }\right) $ with the norm

\begin{equation*}
\left\Vert u\right\Vert _{p,\rho }=\left( \int_{M}\rho ^{\gamma }\left\vert
u\right\vert ^{p}dv_{g}\right) ^{\frac{1}{p}}\text{.}
\end{equation*}%
In this section we will be in need of the following Hardy-Sobolev inequality
and Releich-Kondrakov embedding respectively whose proofs are given in (\cite%
{7}).

\begin{theorem}
\label{th7} Let $(M,g)$ be a Riemannian compact manifold of dimension $n\geq
5$ and $p$, $q$ , $\gamma $ are real numbers such that $\frac{\gamma }{p}=%
\frac{n}{q}-\frac{n}{p}-2$ $\ $and $2\leq p\leq \frac{2n}{n-4}$.$\newline
$For any $\ \epsilon >0$, there is $A(\epsilon ,q,\gamma )$ such that for
any $u\in H_{2}^{2}(M)$ 
\begin{equation}
\left\Vert u\right\Vert _{p,\rho ^{\gamma }}^{2}\leq (1+\epsilon
)K(n,2,\gamma )^{2}\left\Vert \Delta _{g}u\right\Vert _{2}^{2}+A(\epsilon
,q,\gamma )\left\Vert u\right\Vert _{2}^{2}  \tag{11}  \label{11}
\end{equation}%
where $K(n,2,\gamma )$ is the optimal constant.
\end{theorem}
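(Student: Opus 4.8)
The plan is to deduce the inequality on $M$ from its Euclidean counterpart by a single localization around the pole $P$, so that the optimal constant $K(n,2,\gamma)$ is felt only near $P$ while everything else is absorbed into a lower–order term. First I would record the sharp Euclidean Hardy--Rellich--Sobolev inequality: under the stated dimensional relation one has $\gamma=\tfrac{p(n-4)}{2}-n\in[-4,0]$ (so, since $n\ge5$, the weight $\rho^{\gamma}$ is bounded away from $P$ and singular only at $P$), and there is an optimal constant $K(n,2,\gamma)$ with
\begin{equation*}
\Big(\int_{\mathbb R^{n}}|x|^{\gamma}|v|^{p}\,dx\Big)^{2/p}\le K(n,2,\gamma)^{2}\int_{\mathbb R^{n}}|\Delta v|^{2}\,dx\qquad(v\in C_{c}^{\infty}(\mathbb R^{n})),
\end{equation*}
which is classical --- it is the sharp $\dot H^{2}$--Sobolev inequality when $\gamma=0$ and the Rellich inequality when $p=2$ (see \cite{7}).

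The core of the argument is localization. Fix $\epsilon>0$ and work in geodesic normal coordinates on a small ball $B_{\delta}(P)$, where $\rho=|x|$, $dv_{g}=(1+O(\delta^{2}))\,dx$ and $\Delta_{g}w=\Delta_{\mathrm{eucl}}w+O(|x|^{2})\partial^{2}w+O(|x|)\partial w$. Choosing a cutoff $\eta\in C_{c}^{\infty}(B_{\delta}(P))$ with $\eta\equiv1$ on $B_{\delta/2}(P)$ and applying the Euclidean inequality to $\eta u$, the comparison of $dv_{g}$ with $dx$ costs only a factor $1+O(\delta^{2})$ on the leading term. The error $\Delta_{g}-\Delta_{\mathrm{eucl}}$ I would handle with the Calder\'on--Zygmund identity $\|\partial^{2}w\|_{2}=\|\Delta_{\mathrm{eucl}}w\|_{2}$, valid for compactly supported $w$: this lets the $O(\delta^{2})\partial^{2}$--part of the error be re-absorbed into the leading term (at the price of another $1+O(\delta^{2})$ factor), leaving only a genuinely lower--order remainder; and finally $\|\Delta_{g}(\eta u)\|_{2}^{2}\le(1+\epsilon)\|\Delta_{g}u\|_{2}^{2}+C_{\delta}(\|\nabla u\|_{2}^{2}+\|u\|_{2}^{2})$ by expanding $\Delta_{g}(\eta u)=\eta\Delta_{g}u+2\langle\nabla\eta,\nabla u\rangle+u\Delta_{g}\eta$. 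Thus, for $\delta$ small enough depending on $\epsilon$,
\begin{equation*}
\Big(\int_{B_{\delta/2}(P)}\rho^{\gamma}|u|^{p}\,dv_{g}\Big)^{2/p}\le(1+\tfrac{\epsilon}{2})K(n,2,\gamma)^{2}\|\Delta_{g}u\|_{2}^{2}+C(\epsilon,\delta)\big(\|\nabla u\|_{2}^{2}+\|u\|_{2}^{2}\big).
\end{equation*}

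Away from the pole, on $M\setminus B_{\delta/2}(P)$ one has $\rho\ge\delta/2$, hence (as $\gamma\le0$) $\rho^{\gamma}\le(\delta/2)^{\gamma}$ and the remaining piece is $\le C_{\delta}\|u\|_{p}^{p}$. If $\gamma<0$ then $p<\tfrac{2n}{n-4}$, the embedding $H_{2}^{2}(M)\hookrightarrow L^{p}(M)$ is compact, and the standard interpolation lemma gives $\|u\|_{p}^{2}\le\epsilon'\|u\|_{H_{2}^{2}(M)}^{2}+C_{\epsilon'}\|u\|_{2}^{2}$; combined with $\|u\|_{H_{2}^{2}(M)}^{2}\le C(\|\Delta_{g}u\|_{2}^{2}+\|u\|_{2}^{2})$ (Bochner's formula and interpolation, as in the proof of the Proposition above) and with $\|\nabla u\|_{2}^{2}\le\epsilon'\|\Delta_{g}u\|_{2}^{2}+C_{\epsilon'}\|u\|_{2}^{2}$ applied to the remainder of the previous step, both the far piece and all lower--order terms are dominated by $\epsilon'\|\Delta_{g}u\|_{2}^{2}+C(\epsilon',\delta)\|u\|_{2}^{2}$. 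If instead $\gamma=0$, i.e.\ $p=\tfrac{2n}{n-4}$, the assertion is exactly the known sharp Sobolev inequality $H_{2}^{2}(M)\hookrightarrow L^{2n/(n-4)}(M)$ used repeatedly above, and nothing more is needed. To assemble: since $p\ge2$, hence $2/p\le1$, one has $(A+B)^{2/p}\le A^{2/p}+B^{2/p}$ with $A,B$ the integrals over $B_{\delta/2}(P)$ and its complement; adding the two estimates and choosing $\epsilon'$ small compared with $\epsilon K(n,2,\gamma)^{2}$ --- the $\delta$--dependent constants being harmless because they multiply only $\|u\|_{2}^{2}$ --- yields $\|u\|_{p,\rho^{\gamma}}^{2}\le(1+\epsilon)K(n,2,\gamma)^{2}\|\Delta_{g}u\|_{2}^{2}+A(\epsilon,q,\gamma)\|u\|_{2}^{2}$ on $C^{\infty}(M)$, and density extends it to $H_{2}^{2}(M)$.

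The hard part is the localization step, namely keeping the \emph{optimal} constant $K(n,2,\gamma)$ on $\|\Delta_{g}u\|_{2}^{2}$: one must split the metric distortion and the lower--order part of $\Delta_{g}$ so that only the second--order distortion feeds the factor $1+\epsilon$ (this is why the Calder\'on--Zygmund identity is invoked, to re-absorb it), while every first-- and zeroth--order error is pushed into terms of the shape $\epsilon'\|\Delta_{g}u\|_{2}^{2}+C\|u\|_{2}^{2}$; any leftover multiple of $\|\Delta_{g}u\|_{2}^{2}$ with a constant not controlled by $\epsilon$ would destroy the sharpness asserted in the statement.
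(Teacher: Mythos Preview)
The paper does not actually prove this theorem: immediately before the statement it says that the proof is ``given in \cite{7}'', and no argument is supplied in the present text. So there is no in-paper proof to compare against.

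Your proposal is a correct and complete outline, and it is the standard route for transferring a sharp Euclidean inequality to a compact manifold (the same scheme as for the optimal $H_{2}^{2}\hookrightarrow L^{2n/(n-4)}$ inequality quoted throughout the paper). The key points --- (i) applying the sharp Euclidean Hardy--Rellich--Sobolev inequality to $\eta u$ in normal coordinates, (ii) absorbing the $O(|x|^{2})$ second-order metric distortion back into the leading term via the flat Calder\'on--Zygmund identity $\|\partial^{2}w\|_{2}=\|\Delta_{\mathrm{eucl}}w\|_{2}$, (iii) pushing all first- and zeroth-order remainders into $\epsilon'\|\Delta_{g}u\|_{2}^{2}+C\|u\|_{2}^{2}$ by the interpolation $\|\nabla u\|_{2}^{2}\le\epsilon'\|\Delta_{g}u\|_{2}^{2}+C_{\epsilon'}\|u\|_{2}^{2}$, (iv) treating the region away from $P$ by the \emph{compact} embedding $H_{2}^{2}(M)\hookrightarrow L^{p}(M)$ when $\gamma<0$ (equivalently $p<\tfrac{2n}{n-4}$) together with Ehrling's lemma, and (v) recombining via the subadditivity $(A+B)^{2/p}\le A^{2/p}+B^{2/p}$ --- are all sound and are exactly the ingredients one expects in the proof deferred to \cite{7}. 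Your identification of the delicate step (keeping the optimal constant while re-absorbing the second-order perturbation) is on point; the rest is routine once that is handled.
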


In case $\gamma =0$, $K(n,2,0)=K(n,2)=K_{o}^{\frac{1}{2}}$ is the best
constant in the Sobolev's embedding of $H_{2}^{2}(M)$ in $L^{N}(M)$ where $N=%
\frac{2n}{n-4}$.

\begin{theorem}
Let $(M,g)$ be a compact Riemannian manifold of dimension $n\geq 5$ and $p$, 
$q$ , $\gamma $ are real numbers satisfying $1\leq q\leq p\leq \frac{nq}{n-2q%
}$ , $\gamma <0$ and $l=1$,$2$.

If $\frac{\gamma }{p}=n$ $(\frac{1}{q}-\frac{1}{p})-l$ then the inclusion $%
H_{l}^{q}(M)\subset $ $L^{p}(M,\rho ^{\gamma })$ is continuous.$\newline
$If $\frac{\gamma }{p}>n$ $(\frac{1}{q}-\frac{1}{p})-l$ then inclusion $%
H_{l}^{q}(M)\subset $ $L^{p}(M,\rho ^{\gamma })\ $is compact.
\end{theorem}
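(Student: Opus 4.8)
The plan is to derive the first (continuous) statement as the Hardy--Sobolev inequality
\begin{equation*}
\|u\|_{p,\rho}\le C\,\|u\|_{H_l^q(M)}\qquad\text{for all }u\in H_l^q(M),
\end{equation*}
and then to obtain the second (compact) statement from it by a shrinking--ball argument together with the classical Rellich--Kondrakov theorem. Throughout, fix $\delta\in(0,\delta(M))$ and a cut-off $\varphi\in C^{\infty}(M)$ with $\varphi\equiv1$ on $B(P,\delta/2)$ and $\mathrm{supp}\,\varphi\subset B(P,\delta)$, so that $u=\varphi u+(1-\varphi)u$. First I would observe that the hypotheses already force $p<\tfrac{nq}{n-lq}$: since $\gamma<0$, the relation $\tfrac{\gamma}{p}\ge n(\tfrac1q-\tfrac1p)-l$ gives $n(\tfrac1q-\tfrac1p)<l$, while $q<n/l$ because $p\le\tfrac{nq}{n-2q}$ forces $q<n/2$. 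Because $\gamma<0$, on $\mathrm{supp}\,(1-\varphi)$ one has $\rho^{\gamma}\le(\delta/2)^{\gamma}$, so $\|(1-\varphi)u\|_{p,\rho}\le C\|u\|_{L^{p}(M)}$, and this term is handled by the classical Sobolev embedding $H_l^q(M)\hookrightarrow L^{p}(M)$, which is moreover compact since $p<\tfrac{nq}{n-lq}$ (a fact reused below).

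The crux is the term $\varphi u$, supported in a normal coordinate ball $B(P,\delta)$, where $\rho=|x|$, where $dv_g$ is comparable to Lebesgue measure with constants depending only on $\delta$, and where $\|(\varphi u)\circ\exp_{P}\|_{H_l^q(\mathbb{R}^n)}\le C\|\varphi u\|_{H_l^q(M)}$. It therefore suffices to prove the Euclidean Hardy--Sobolev inequality
\begin{equation*}
\Big(\int_{\mathbb{R}^n}|x|^{\gamma}|v|^{p}\,dx\Big)^{1/p}\le C\,\|\nabla^{l}v\|_{L^{q}(\mathbb{R}^n)},\qquad v\in C_c^{\infty}(\mathbb{R}^n),
\end{equation*}
under $\tfrac{\gamma}{p}=n(\tfrac1q-\tfrac1p)-l$, $-lq\le\gamma<0$, $q\le p$ (cf.\ Theorem \ref{th7} for the case $l=2$). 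I would obtain it by H\"older interpolation between the Gagliardo--Nirenberg--Sobolev inequality $\|v\|_{L^{p_l}(\mathbb{R}^n)}\le C\|\nabla^{l}v\|_{L^{q}}$ with $p_l=\tfrac{nq}{n-lq}$, and the Hardy--Rellich inequality $\||x|^{-l}v\|_{L^{q}}\le C\|\nabla^{l}v\|_{L^{q}}$ (both valid because $q<n/l$; at $q=1$ one uses their $L^{1}$ versions). Writing
\begin{equation*}
|x|^{\gamma}|v|^{p}=\big(|x|^{-lq}|v|^{q}\big)^{a}\,\big(|v|^{p_l}\big)^{b},\qquad a=-\frac{\gamma}{lq},\quad b=\frac{p+\gamma/l}{p_l},
\end{equation*}
a short computation shows that the scaling identity $\tfrac{\gamma}{p}=n(\tfrac1q-\tfrac1p)-l$ is \emph{precisely} the condition $a+b=1$, while $-lq\le\gamma<0$ and $q\le p$ give $a,b\in[0,1]$. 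Applying the generalized H\"older inequality with exponents $1/a$, $1/b$ and then the two endpoint inequalities yields
\begin{equation*}
\int_{\mathbb{R}^n}|x|^{\gamma}|v|^{p}\,dx\le C\,\|\nabla^{l}v\|_{L^{q}}^{\,qa+p_l b}=C\,\|\nabla^{l}v\|_{L^{q}}^{\,p},
\end{equation*}
since $qa+p_l b=-\gamma/l+p+\gamma/l=p$. Undoing the localization gives $\|u\|_{p,\rho}\le C\|u\|_{H_l^q(M)}$.

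For the compact statement I would argue as follows. Assume the strict inequality $\tfrac{\gamma}{p}>n(\tfrac1q-\tfrac1p)-l$ and set $\gamma_{c}:=pn(\tfrac1q-\tfrac1p)-lp$; then $\gamma_{c}<\gamma<0$ and $-lq\le\gamma_{c}$, so the triple $(p,q,\gamma_{c})$ is admissible for the continuous embedding just established. Let $(u_m)_m$ be bounded in $H_l^q(M)$; since $p<\tfrac{nq}{n-lq}$, Rellich--Kondrakov yields a subsequence with $u_m\to u$ strongly in $L^{p}(M)$ and weakly in $H_l^q(M)$. For $\eta\in(0,\delta(M))$ split
\begin{equation*}
\int_M\rho^{\gamma}|u_m-u|^{p}\,dv_g=\int_{B(P,\eta)}\rho^{\gamma}|u_m-u|^{p}\,dv_g+\int_{M\setminus B(P,\eta)}\rho^{\gamma}|u_m-u|^{p}\,dv_g .
\end{equation*}
On $M\setminus B(P,\eta)$ the weight is $\le\eta^{\gamma}$, so that integral is $\le\eta^{\gamma}\|u_m-u\|_{L^{p}(M)}^{p}\to0$ as $m\to\infty$ for fixed $\eta$. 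On $B(P,\eta)$ one has $\rho^{\gamma}\le\eta^{\gamma-\gamma_{c}}\rho^{\gamma_{c}}$ because $\gamma-\gamma_{c}>0$, whence by the continuous embedding with weight $\rho^{\gamma_{c}}$,
\begin{equation*}
\int_{B(P,\eta)}\rho^{\gamma}|u_m-u|^{p}\,dv_g\le\eta^{\gamma-\gamma_{c}}\int_M\rho^{\gamma_{c}}|u_m-u|^{p}\,dv_g\le C\,\eta^{\gamma-\gamma_{c}}\|u_m-u\|_{H_l^q(M)}^{p}\le C'\eta^{\gamma-\gamma_{c}},
\end{equation*}
uniformly in $m$. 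Given $\varepsilon>0$, choosing $\eta$ small so that $C'\eta^{\gamma-\gamma_{c}}<\varepsilon/2$ and then $m$ large so that the first term is $<\varepsilon/2$ shows $u_m\to u$ in $L^{p}(M,\rho^{\gamma})$; hence the inclusion is compact.

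The hard part is the second step: verifying that the prescribed range of $(p,q,\gamma,l)$ is exactly the one in which the interpolation closes ($a+b=1$ with $a,b\in[0,1]$) and in which the Hardy--Rellich endpoint is available ($q<n/l$), and making the manifold-to-$\mathbb{R}^n$ reduction precise (density of $C_c^{\infty}$, comparison of the metric and of covariant versus Euclidean derivatives on the coordinate ball, and separate bookkeeping for $l=1$ and $l=2$). The regular-region bound and the compactness splitting are routine.
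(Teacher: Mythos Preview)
The paper does not actually prove this theorem: it is quoted together with Theorem~\ref{th7} under the sentence ``whose proofs are given in \cite{7}'', so there is no in-paper argument against which to compare your proposal.

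On its own merits your argument is sound and follows the standard route for such weighted embeddings. The reduction to a Euclidean estimate via a cut-off supported in a normal ball is the natural step, and your H\"older interpolation between the Sobolev endpoint $\|v\|_{p_l}\le C\|\nabla^l v\|_q$ and the Hardy--Rellich endpoint $\||x|^{-l}v\|_q\le C\|\nabla^l v\|_q$ is exactly the Caffarelli--Kohn--Nirenberg mechanism; your verification that $a+b=1$ is equivalent to the scaling relation $\gamma/p=n(1/q-1/p)-l$ is correct, and the check that $a,b\in[0,1]$ reduces to $q\le p$ and $q<n/l$, both of which hold under the stated hypotheses (the latter since $p\le nq/(n-2q)$ forces $q<n/2\le n/l$). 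Your observation that $\gamma<0$ automatically yields the \emph{strict} inequality $p<nq/(n-lq)$ is also correct and is precisely what makes the unweighted Rellich--Kondrakov theorem available for the compactness step. The shrinking-ball argument for compactness, comparing $\rho^{\gamma}$ with $\eta^{\gamma-\gamma_c}\rho^{\gamma_c}$ on $B(P,\eta)$, is the standard way to pass from the critical weight to the subcritical one and is carried out correctly.

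Two small points worth tightening when you write it up: for $l=2$ the Hardy--Rellich inequality in $L^q$ requires $1<q<n/2$ (the constant degenerates at $q=1$), so the endpoint $q=1$ needs either a direct argument or the $L^1$ Hardy inequality iterated; and in the localization step you should record that on $B(P,\delta)$ the covariant derivatives $\nabla^l_g$ and the Euclidean derivatives $\partial^l$ differ by lower-order terms with bounded coefficients, so that $\|\varphi u\|_{H_l^q(M)}$ and $\|(\varphi u)\circ\exp_P\|_{W^{l,q}(\mathbb{R}^n)}$ are comparable. Neither affects the validity of the scheme.
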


We consider the following equation%
\begin{equation}
\Delta _{g}^{2}u+div_{g}\left( \frac{a(x)}{\rho ^{\sigma }}\nabla
_{g}u\right) +\frac{b(x)}{\rho ^{\mu }}u=\lambda \left\vert u\right\vert
^{q-2}u+f(x)\left\vert u\right\vert ^{N-2}u  \tag{12}  \label{12}
\end{equation}%
where $a$ and $b$ are smooth function and $\rho $ denotes the distance
function defined by (\ref{10}), $\lambda >0$ in some interval $\left(
0,\lambda _{\ast }\right) $, $1<q<2$, $\sigma $, $\mu $ will be precise
later and we associate to (\ref{12}) on $H_{2}^{2}(M)$ the functional 
\begin{equation*}
J_{\lambda }(u)=\frac{1}{2}\int_{M}\left( \left( \Delta _{g}u\right) ^{2}-%
\frac{a(x)}{\rho ^{\sigma }}\left\vert \nabla _{g}u\right\vert ^{2}+\frac{%
b(x)}{\rho ^{\mu }}u^{2}\right) dv_{g}-\frac{\lambda }{q}\int_{M}\left\vert
u\right\vert ^{q}dv_{g}-\frac{1}{N}\int_{M}f(x)\left\vert u\right\vert
^{N}dv_{g}\text{.}
\end{equation*}%
If we put 
\begin{equation*}
\Phi _{\lambda }(u)=\left\langle \nabla J_{\lambda }(u),\text{ }%
u\right\rangle
\end{equation*}%
we get%
\begin{equation*}
\Phi _{\lambda }(u)=\int_{M}\left( \Delta _{g}u\right) ^{2}-\frac{a(x)}{\rho
^{\sigma }}\left\vert \nabla _{g}u\right\vert ^{2}+\frac{b(x)}{\rho ^{\mu }}%
u^{2}dv_{g}-\lambda \int_{M}\left\vert u\right\vert
^{q}dv_{g}-\int_{M}f(x)\left\vert u\right\vert ^{N}dv_{g}.
\end{equation*}

\begin{theorem}
\label{thm4} Let $0<\sigma <\frac{n}{s}<2$ and $0<\mu <\frac{n}{p}<4$.
Suppose that 
\begin{equation*}
\sup_{u\in H_{2}^{2}\left( M\right) }J_{\lambda ,\sigma ,\mu }(u)<\frac{2}{n%
\text{ }K_{o}^{\frac{n}{4}}(f(x_{\circ }))^{\frac{n-4}{4}}}
\end{equation*}%
then there is $\lambda _{\ast }>0$ such that if $\lambda \in (0,$ $\lambda
_{\ast })$, the equation (\ref{12}) possesses a weak non trivial solution $%
u_{\sigma ,\mu }\in M_{\lambda }$.
\end{theorem}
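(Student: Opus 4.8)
The plan is to reduce the singular equation \eqref{12} to the generic existence scheme already assembled in Section 4, namely Theorem \ref{thm3} together with the Palais--Smale analysis of Lemmas \ref{lem3} and \ref{lem4}. The only genuinely new ingredient is that the coefficients $\rho^{-\sigma}a$ and $\rho^{-\mu}b$ are now singular at the fixed point $P$, so the first task is to verify that the zeroth-order functional-analytic hypotheses of that scheme still hold. Concretely, I would first show that under $0<\sigma<\frac{n}{s}<2$ and $0<\mu<\frac{n}{p}<4$ the functions $\rho^{-\sigma}a$ and $\rho^{-\mu}b$ lie in $L^{r}(M)$ and $L^{s'}(M)$ respectively for exponents $r>\frac n2$, $s'>\frac n4$: since $a,b$ are smooth hence bounded, this is just the statement that $\rho^{-\sigma}\in L^{r}(M)$ for some $r>\frac n2$, which holds iff $\sigma r<n$, i.e.\ iff $\sigma<\frac nr$ for some $r>\frac n2$, which is exactly $\sigma<\frac{n}{n/2}=2$ combined with the hypothesis $\sigma<\frac ns<2$; similarly for $\mu<4$. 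Thus \eqref{12} is an instance of \eqref{2} with admissible coefficients, $P_g$ here meaning $\Delta_g^2+\mathrm{div}_g(\rho^{-\sigma}a\nabla_g\cdot)+\rho^{-\mu}b$.

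Next I would address coercivity. The estimates in the proof of Proposition~1 and of Lemma~\ref{lem} go through verbatim once $\|a\|_{n/2}$ is replaced by $\|\rho^{-\sigma}a\|_{r}$ and $\|b\|_{n/4}$ by $\|\rho^{-\mu}b\|_{s'}$, using H\"older with the Hardy--Sobolev inequality of Theorem~\ref{th7} in place of the plain Sobolev inequality to control $\int \rho^{-\sigma}a|\nabla_g u|^2$ and $\int\rho^{-\mu}b\,u^2$ by $\|u\|_{H_2^2}^2$. Under the hypothesis on $\sigma,\mu$ this shows $\|u\|=\bigl(\int (\Delta_g u)^2-\rho^{-\sigma}a|\nabla_g u|^2+\rho^{-\mu}b\,u^2\,dv_g\bigr)^{1/2}$ is an equivalent norm on $H_2^2(M)$, so $P_g$ is coercive with some $\Lambda>0$; this yields the nonempty set $M_\lambda$ for $\lambda\in(0,\lambda_\circ)$ exactly as in Lemma~\ref{lem}, and Lemmas~\ref{lem1} and~\ref{lem2} transfer without change since their proofs only used Sobolev/H\"older bounds of this type.

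With the geometric framework in place, the Palais--Smale argument of Lemma~\ref{lem3} applies: for a sequence $(u_m)$ in $M_\lambda$ with $J_{\lambda,\sigma,\mu}(u_m)\le c$ and $\nabla J_{\lambda,\sigma,\mu}(u_m)-\mu_m\nabla\Phi_{\lambda,\sigma,\mu}(u_m)\to 0$, boundedness in $H_2^2(M)$ follows from Lemma~\ref{lem1}, and then the key point is that the singular lower-order terms vanish in the limit. By Brezis--Lieb and the compactness half of the Rellich--Kondrakov theorem stated just before \eqref{12}, one gets $\int\rho^{-\mu}b\,(u_m-u)^2\,dv_g\to 0$ and $\int\rho^{-\sigma}a\,|\nabla_g(u_m-u)|^2\,dv_g\to 0$, precisely because the hypothesis $\sigma<\frac ns$, $\mu<\frac np$ makes the relevant weighted inclusions compact rather than merely continuous. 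After these terms are discarded the computation collapses to the same inequality as in Lemma~\ref{lem3}, giving $J_{\lambda,\sigma,\mu}(u_m)-J_{\lambda,\sigma,\mu}(u)=\frac2n\|\Delta_g(u_m-u)\|_2^2+o(1)$ and hence, under the assumed strict bound $c<\frac{2}{nK_o^{n/4}(f(x_\circ))^{(n-4)/4}}$, strong convergence $u_m\to u$ in $H_2^2(M)$. Finally, invoking Lemma~\ref{lem4} to produce such a Palais--Smale sequence at the level $c=\inf_{M_\lambda}J_{\lambda,\sigma,\mu}\le \sup_{H_2^2(M)}J_{\lambda,\sigma,\mu}<\frac{2}{nK_o^{n/4}(f(x_\circ))^{(n-4)/4}}$, and running the Lagrange-multiplier argument of Theorem~\ref{thm3} (the multiplier is forced to be $0$ by Lemma~\ref{lem2}), we obtain a nontrivial weak solution $u_{\sigma,\mu}\in M_\lambda$ of \eqref{12} for all $\lambda\in(0,\lambda_\ast)$ with $\lambda_\ast=\min(\lambda_\circ,\lambda_1)$. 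The main obstacle is the second step, making the weighted H\"older/Hardy--Sobolev bookkeeping produce genuine coercivity and, in the Palais--Smale step, genuine compactness of the singular terms; everything downstream is a transcription of Section~4.
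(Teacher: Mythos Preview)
Your reduction is exactly the paper's: set $\tilde a=\rho^{-\sigma}a$, $\tilde b=\rho^{-\mu}b$, check that $\tilde a\in L^{r}(M)$ with $r>\tfrac n2$ and $\tilde b\in L^{s'}(M)$ with $s'>\tfrac n4$ (which is precisely the computation $\sigma r<n$, $\mu s'<n$ you carry out), and then invoke Theorem~\ref{thm3}. The paper's proof ends there in two lines; once integrability is established, equation~\eqref{12} is literally an instance of~\eqref{2}, so there is no need to re-run the Palais--Smale analysis with weighted Hardy--Sobolev and Rellich--Kondrakov tools as you do in your third and fourth paragraphs---that machinery is reserved in the paper for the genuinely critical case $\sigma=2$, $\mu=4$ in Section~6.

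One small correction: you write that the H\"older/Hardy--Sobolev estimates of Proposition~1 show $\|u\|$ is an equivalent norm and hence that $P_g$ is coercive. Proposition~1 only establishes the upper bound $\int uP_g u\,dv_g\le C\|u\|_{H_2^2}^2$ from integrability; the lower bound (coercivity) is the \emph{hypothesis}, not the conclusion. Coercivity of $P_g$ is a standing assumption carried over from Theorem~\ref{thm3}, not something derivable from $\sigma,\mu$ alone.
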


\begin{proof}
Let $\tilde{a}=\frac{a(x)}{\rho ^{\sigma }}$ and $\tilde{b}=\frac{b(x)}{\rho
^{\mu }}$, so if $\sigma \in (0,\min \left( 2,\frac{n}{s}\right) )$ and $\mu
\in (0,\min (4,\frac{n}{p}))$, obliviously $\tilde{a}\in L^{s}(M)$, $\tilde{b%
}\in L^{p}(M)$, where $s>\frac{n}{2}$ and $p>\frac{n}{4}$ .Theorem \ref{thm4}
is a consequence of Theorem \ref{thm3}.
\end{proof}

\section{The critical cases $\protect\sigma =2$ and $\protect\mu =4$}

By section four, for any $\sigma \in (0,\min \left( 2,\frac{n}{s}\right) )$
and $\mu \in (0,\min (4,\frac{n}{p}))$, there is a solution $u_{\sigma ,\mu
}\in M_{\lambda }$ of equation (\ref{2}). Now we are going to show that the
sequence $\left( u_{\sigma ,\mu }\right) _{\sigma ,\mu }$ is bounded in $%
H_{2}^{2}\left( M\right) $. Evaluating $J_{\lambda ,\sigma ,\mu }$ at $%
u_{\sigma ,\mu }$%
\begin{equation*}
J_{\lambda ,\sigma ,\mu }(u_{\sigma ,\mu })=\frac{1}{2}\left\Vert u_{\sigma
,\mu }\right\Vert ^{2}-\frac{1}{N}\int_{M}f(x)\left\vert u_{\sigma ,\mu
}\right\vert ^{N}dv_{g}-\frac{1}{q}\lambda \int_{M}\left\vert u_{\sigma ,\mu
}\right\vert ^{q}dv_{g}
\end{equation*}%
and taking account of $u_{\sigma ,\mu }\in M_{\lambda }$, we infer that%
\begin{equation*}
J_{\lambda ,\sigma ,\mu }(u_{\sigma ,\mu })=\frac{N-2}{2N}\left\Vert
u_{\sigma ,\mu }\right\Vert ^{2}-\lambda \frac{N-q}{Nq}\int_{M}\left\vert
u_{\sigma ,\mu }\right\vert ^{q}dv_{g}\text{.}
\end{equation*}

For a smooth function $a$ on $M$, denotes by $a^{-}=\min \left( 0,\min_{x\in
M}(a(x)\right) $. Let $K(n,2,\sigma )$ the best constant and $A\left(
\varepsilon ,\sigma \right) $ the corresponding constant in the Hardy-
Sobolev inequality given in Theorem \ref{th7}.

\begin{theorem}
Let $(M,g)$ be a Riemannian compact manifold of dimension $n\geq 5$. Let $%
\left( u_{m}\right) _{m}=\left( u_{\sigma _{m},\mu _{m}}\right) _{m}$ be a
sequence in $M_{\lambda }$ such that 
\begin{equation*}
\left\{ 
\begin{array}{c}
J_{\lambda ,\sigma ,\mu }(u_{m})\leq c_{\sigma ,\mu } \\ 
\nabla J_{\lambda }(u_{m})-\mu _{_{\sigma ,\mu }}\nabla \Phi _{\lambda
}(u_{m})\rightarrow 0%
\end{array}%
\right. \text{.}
\end{equation*}%
Suppose that 
\begin{equation*}
c_{\sigma ,\mu }<\frac{2}{n\text{ }K\left( n,2\right) ^{\frac{n}{4}%
}(\max_{x\in M}f(x))^{\frac{n-4}{4}}}
\end{equation*}%
and 
\begin{equation*}
1+a^{-}\max \left( K(n,2,\sigma ),A\left( \varepsilon ,\sigma \right)
\right) +b^{-}\max \left( K(n,2,\mu ),A\left( \varepsilon ,\mu \right)
\right) >0
\end{equation*}%
then the equation%
\begin{equation*}
\Delta ^{2}u-\nabla ^{\mu }(\frac{a}{\rho ^{2}}\nabla _{\mu }u)+\frac{bu}{%
\rho ^{4}}=f\left\vert u\right\vert ^{N-2}u+\lambda \left\vert u\right\vert
^{q-2}u
\end{equation*}%
has a non trivial solution in the distribution.
\end{theorem}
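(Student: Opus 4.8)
The plan is to run, uniformly in $m$, the same machinery that produced Theorem \ref{thm3} for the fixed operator, with $J_\lambda$ replaced by the functionals $J_{\lambda ,\sigma _m,\mu _m}$ attached to the weights $\rho ^{-\sigma _m}$, $\rho ^{-\mu _m}$; here (since $a,b$ are smooth) the admissible range forces $\sigma _m<\tfrac ns<2$ and $\mu _m<\tfrac np<4$, so we think of $\sigma _m\uparrow 2$, $\mu _m\uparrow 4$. First I would record that the algebraic hypothesis
\[
1+a^{-}\max\bigl(K(n,2,\sigma _m),A(\varepsilon ,\sigma _m)\bigr)+b^{-}\max\bigl(K(n,2,\mu _m),A(\varepsilon ,\mu _m)\bigr)>0 ,
\]
together with the Hardy--Sobolev inequality of Theorem \ref{th7} (and an intermediate Hardy--Rellich bound $\int_M\rho ^{-\sigma}|\nabla w|^2\,dv_g\le C(\|\Delta _g w\|_2^2+\|w\|_2^2)$ for the gradient term), makes the quadratic forms $Q_m(w)=\int_M\bigl((\Delta _g w)^2-\tfrac a{\rho ^{\sigma _m}}|\nabla _g w|^2+\tfrac b{\rho ^{\mu _m}}w^2\bigr)dv_g$ uniformly coercive, i.e. $Q_m(w)\ge \Lambda _0\|w\|_{H_2^2(M)}^2$ with $\Lambda _0>0$ independent of $m$. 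Combined with $J_{\lambda ,\sigma _m,\mu _m}(u_m)=\tfrac{N-2}{2N}Q_m(u_m)-\lambda \tfrac{N-q}{Nq}\|u_m\|_q^q\le c_{\sigma ,\mu}$ and $\|u_m\|\ge \tau$, this yields (exactly as in Lemma \ref{lem3}, for $\lambda$ below the relevant threshold) a bound $\|u_m\|_{H_2^2(M)}\le C$ uniform in $m$. Passing to a subsequence, $u_m\rightharpoonup u$ in $H_2^2(M)$, $u_m\to u$ in $L^p(M)$ for $p<N$, $\nabla u_m\to \nabla u$ in $L^p(M)$ for $p<\tfrac{2n}{n-2}$, and $u_m\to u$ a.e.

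Next I would show the Lagrange multiplier tends to zero: testing the Palais--Smale identity with $u_m\in M_\lambda$ annihilates $\langle \nabla J_{\lambda ,\sigma _m,\mu _m}(u_m),u_m\rangle$ and leaves $\mu _m\langle \nabla \Phi _{\lambda ,\sigma _m,\mu _m}(u_m),u_m\rangle \to 0$; since the analogue of Lemma \ref{lem2} holds uniformly (again by the coercivity above and $\lambda$ small), $\langle \nabla \Phi _m(u_m),u_m\rangle \le -\delta <0$, hence $\mu _m\to 0$ and $\nabla J_{\lambda ,\sigma _m,\mu _m}(u_m)\to 0$ in $(H_2^2(M))'$. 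Then for fixed $\varphi \in C^\infty(M)$ I would pass to the limit in
\[
\int_M\Delta _g u_m\,\Delta _g\varphi -\int_M\frac a{\rho ^{\sigma _m}}\nabla _g u_m\!\cdot\!\nabla _g\varphi +\int_M\frac{b\,u_m\varphi}{\rho ^{\mu _m}}-\lambda \int_M|u_m|^{q-2}u_m\varphi -\int_M f|u_m|^{N-2}u_m\varphi \longrightarrow 0 .
\]
The first term passes by weak convergence; $|u_m|^{q-2}u_m\to |u|^{q-2}u$ in $L^{q/(q-1)}$ and $|u_m|^{N-2}u_m\rightharpoonup |u|^{N-2}u$ in $L^{N/(N-1)}$ (boundedness plus a.e.\ convergence), so the last two terms pass against the bounded weight $f$. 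For each singular term I would split $M=\{\rho >\varepsilon\}\cup\{\rho \le \varepsilon\}$: on $\{\rho >\varepsilon\}$ the weights are bounded and converge uniformly to $\rho ^{-2}$, $\rho ^{-4}$ while $\nabla u_m\to \nabla u$, $u_m\to u$ in $L^2$; on $\{\rho \le \varepsilon\}$ Cauchy--Schwarz gives, e.g., $\bigl|\int_{\{\rho \le \varepsilon\}}\tfrac{b\,u_m\varphi}{\rho ^{\mu _m}}\bigr|\le \|b\|_\infty\|\varphi\|_\infty\bigl(\int_M\rho ^{-\mu _m}u_m^2\bigr)^{1/2}\bigl(\int_{\{\rho \le \varepsilon\}}\rho ^{-\mu _m}\bigr)^{1/2}$, whose first factor is bounded uniformly in $m$ by Theorem \ref{th7} and whose second factor is $\le \bigl(\int_{\{\rho \le \varepsilon\}}\rho ^{-4}\bigr)^{1/2}=o_\varepsilon(1)$ because $n\ge 5$; the gradient term is treated identically with the Hardy--Rellich bound and $\int_{\{\rho \le \varepsilon\}}\rho ^{-2}=o_\varepsilon(1)$. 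Letting $m\to \infty$, then $\varepsilon \to 0$, shows $u$ solves $\Delta ^2u-\nabla ^\mu(\tfrac a{\rho ^2}\nabla _\mu u)+\tfrac{bu}{\rho ^4}=f|u|^{N-2}u+\lambda |u|^{q-2}u$ in the distributional sense.

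For non-triviality I would argue as in Lemma \ref{lem3}. Put $w_m=u_m-u\rightharpoonup 0$. The same splitting-at-$\rho =\varepsilon$ estimates applied to the cross terms (using $\int_M\rho ^{-2}|\nabla _g u|^2<\infty$ and $\int_M\rho ^{-4}u^2<\infty$, valid for $u\in H_2^2(M)$ and $n\ge 5$) give the Brezis--Lieb splitting $Q_m(u_m)=Q_\infty(u)+Q_m(w_m)+o(1)$, $\int_M f|u_m|^N=\int_M f|u|^N+\int_M f|w_m|^N+o(1)$, and $\langle \nabla J_{\lambda ,\sigma _m,\mu _m}(u_m),w_m\rangle =o(1)$, whence $Q_m(w_m)=\int_M f|w_m|^N+o(1)$ and $J_{\lambda ,\sigma _m,\mu _m}(u_m)=J_\infty(u)+\tfrac2n\int_M f|w_m|^N+o(1)$, where $J_\infty$ is the limiting functional and $\Phi _\infty(u)=0$ (the weak limit of a Palais--Smale sequence is a critical point). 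If $\int_M f|w_m|^N\to 0$ then $Q_m(w_m)\to 0$, so $w_m\to 0$ in $H_2^2(M)$ by the uniform coercivity, hence $\|u\|\ge \tau$ and $u\in M_\lambda$ is the desired non-trivial solution. Otherwise, using $\int_M f|w_m|^N\le \max_M f\,\|w_m\|_N^N$ with $\|w_m\|_N^2\le (1+\varepsilon)K_o\|\Delta _g w_m\|_2^2+o(1)$, against $Q_m(w_m)\ge \Lambda _0\|\Delta _g w_m\|_2^2+o(1)$, one gets $\liminf_m\int_M f|w_m|^N\ge c_*>0$ explicitly; then $c_{\sigma ,\mu}\ge J_{\lambda ,\sigma _m,\mu _m}(u_m)+o(1)=J_\infty(u)+\tfrac2n\int_M f|w_m|^N+o(1)$ together with the hypothesis $c_{\sigma ,\mu}<\tfrac2{nK(n,2)^{n/4}(\max_M f)^{(n-4)/4}}$ forces $J_\infty(u)<0=J_\infty(0)$, so again $u\neq 0$.

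The part I expect to be the real obstacle is the uniform control of the singular Hardy-type quantities $\int_M\rho ^{-\sigma _m}|\nabla _g u_m|^2$ and $\int_M\rho ^{-\mu _m}u_m^2$ as $\sigma _m,\mu _m$ approach the sharp exponents $2,4$: the whole scheme hinges on (i) the algebraic hypothesis keeping $Q_m$ coercive with an $m$-independent constant, and (ii) the optimal Hardy--Sobolev/Hardy--Rellich inequalities of Theorem \ref{th7} keeping the possible concentration at $R$ under control, the spare integrability $\int_{B_\varepsilon}\rho ^{-4}\to 0$ coming precisely from $n\ge 5$. Everything else---the Brezis--Lieb expansions, the vanishing of the Lagrange multiplier, and the passage to the limit in the subcritical and critical nonlinear terms---is routine once these uniform bounds are secured.
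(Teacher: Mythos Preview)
Your overall strategy coincides with the paper's: the algebraic hypothesis together with the Hardy--Sobolev inequalities of Theorem~\ref{th7} yields uniform coercivity of the quadratic forms $Q_m$, hence boundedness of $(u_m)$ in $H_2^2(M)$; one extracts weak limits, shows $\mu_m\to 0$ by testing with $u_m$, and passes to the limit in the weak formulation. For this last step your $\varepsilon$-splitting is a clean variant of what the paper does: the paper instead records that boundedness in $H_2^2(M)$ forces $\nabla u_m\rightharpoonup\nabla u$ in $L^2(M,\rho^{-2})$ and $u_m\rightharpoonup u$ in $L^2(M,\rho^{-4})$, then splits each singular term as a piece with $\rho^{-\sigma_m}-\rho^{-2}$ (respectively $\rho^{-\mu_m}-\rho^{-4}$), handled by dominated convergence, plus a piece at the limiting weight, handled by the weighted weak convergence. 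Both routes deliver the distributional equation.

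The step that does not close is the second branch of your dichotomy for non-triviality. From $Q_m(w_m)=\int_M f|w_m|^N+o(1)$, the Sobolev bound $\|w_m\|_N^2\le(1+\varepsilon)K_o\|\Delta_g w_m\|_2^2+o(1)$, and $Q_m(w_m)\ge\Lambda_0\|\Delta_g w_m\|_2^2$, the explicit lower bound one actually extracts is
\[
\liminf_m\int_M f|w_m|^N\ \ge\ \frac{\Lambda_0^{\,n/4}}{K_o^{\,n/4}\,(\max_M f)^{(n-4)/4}}\,,
\]
which carries the coercivity constant $\Lambda_0$. Since in general $\Lambda_0<1$, this can fall strictly below $\tfrac{n}{2}\,c_{\sigma,\mu}$ (the hypothesis only places $\tfrac{n}{2}\,c_{\sigma,\mu}$ below $K_o^{-n/4}(\max_M f)^{-(n-4)/4}$), so the conclusion $J_\infty(u)<0$ does not follow. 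The paper sidesteps this by asserting \emph{strong} convergence $u_m\to u$ in $H_2^2(M)$ ``as in the proof of Theorem~\ref{thm3}'' (i.e.\ the argument of Lemma~\ref{lem3}), which gives $\|u\|\ge\tau$ and hence $u\in M_\lambda$ directly; that argument rests on the identity $\|\Delta_g w_m\|_2^2=\int_M f|w_m|^N+o(1)$, i.e.\ on the vanishing of the singular lower-order contributions $\int_M a\rho^{-\sigma_m}|\nabla_g w_m|^2$ and $\int_M b\rho^{-\mu_m}w_m^2$. If you want to keep your dichotomy, you should establish this vanishing (your own $\varepsilon$-splitting, combined with strong $L^2$-convergence of $\nabla w_m$ and $w_m$ away from the pole, is the natural tool) rather than fall back on the coercivity lower bound.
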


\begin{proof}
Let $\left( u_{m}\right) _{m}\subset M_{\lambda ,\sigma ,\mu }$,%
\begin{equation*}
J_{\lambda ,\sigma ,\mu }(u_{m})=\frac{N-2}{2N}\left\Vert u_{m}\right\Vert
^{2}-\lambda \frac{N-q}{Nq}\int_{M}\left\vert u_{m}\right\vert ^{q}dv_{g}
\end{equation*}%
As in proof of Theorem \ref{thm3}, we get 
\begin{equation*}
J_{\lambda ,\sigma ,\mu }(u_{m})\geq \left\Vert u_{m}\right\Vert ^{2}(\frac{%
N-2}{2N}-\lambda \frac{N-q}{Nq}\Lambda _{\sigma ,\mu }^{-\frac{q}{2}}V(M)^{1-%
\frac{q}{N}}(\max ((1+\varepsilon )K\left( n,2\right) ,A_{\varepsilon }))^{%
\frac{q}{2}}\tau ^{q-2})>0
\end{equation*}%
where $0<\lambda <\frac{\frac{\left( N-2\right) q}{2\left( N-q\right) }%
\Lambda _{\sigma ,\mu }^{\frac{q}{2}}}{V(M)^{1-\frac{q}{N}}(\max
((1+\varepsilon )K\left( n,2\right) ,A_{\varepsilon }))^{\frac{q}{2}}\tau
^{q-2}}$.

First we claim that%
\begin{equation*}
\lim_{\left( \sigma ,\mu \right) \rightarrow \left( 2^{-},4^{-}\right) }\inf
\Lambda _{\sigma ,\mu }>0\text{.}
\end{equation*}%
Indeed, if $\nu _{1,\sigma ,\mu }$ denotes the first nonzero eigenvalue of
the operator $P_{g}=\Delta _{g}^{2}-div\left( \frac{a}{\rho ^{\sigma }}%
\nabla _{g}\right) +\frac{b}{\rho ^{\mu }}$, then clearly $\Lambda _{\sigma
,\mu }\geq \nu _{1,\sigma ,\mu }$. Suppose by absurd that $\lim_{\left(
\sigma ,\mu \right) \rightarrow \left( 2^{-},4^{-}\right) }\inf \Lambda
_{\sigma ,\mu }=0$, then $\lim \inf_{\left( \sigma ,\mu \right) \rightarrow
\left( 2^{-},4^{-}\right) }\nu _{1,\sigma ,\mu }=0$. Independently, if $%
u_{\sigma ,\mu }$ is the corresponding eigenfunction to $\nu _{1,\sigma ,\mu
}$ we have%
\begin{equation*}
\nu _{1,\sigma ,\mu }=\left\Vert \Delta u\right\Vert _{2}^{2}+\int_{M}\frac{%
a\left\vert \nabla u\right\vert ^{2}}{\rho ^{\sigma }}dv_{g}+\int_{M}\frac{%
bu^{2}}{\rho ^{\mu }}dv_{g}
\end{equation*}%
\begin{equation}
\geq \left\Vert \Delta u\right\Vert _{2}^{2}+a^{-}\int \frac{\left\vert
\nabla u\right\vert ^{2}}{\rho ^{\sigma }}dv_{g}+b^{-}\int_{M}\frac{u^{2}}{%
\rho ^{\mu }}dv_{g}  \tag{13}  \label{13}
\end{equation}%
where $a^{-}=\min \left( 0,\min_{x\in M}a(x)\right) $ and $b^{-}=\min \left(
0,\min_{x\in M}b(x)\right) $. The Hardy- Sobolev's inequality given by
Theorem \ref{th7} leads to%
\begin{equation*}
\int_{M}\frac{\left\vert \nabla u\right\vert ^{2}}{\rho ^{\sigma }}%
dv_{g}\leq C(\left\Vert \nabla \left\vert \nabla u\right\vert \right\Vert
^{2}+\left\Vert \nabla u\right\Vert ^{2})
\end{equation*}%
and since 
\begin{equation*}
\left\Vert \nabla \left\vert \nabla u\right\vert \right\Vert ^{2}\leq
\left\Vert \nabla ^{2}u\right\Vert ^{2}\leq \left\Vert \Delta u\right\Vert
^{2}+\beta \left\Vert \nabla u\right\Vert ^{2}
\end{equation*}%
where $\beta >0$ is a constant and it is well known that for any $%
\varepsilon >0$ there is a constant $c\left( \varepsilon \right) >0$ such
that 
\begin{equation*}
\left\Vert \nabla u\right\Vert ^{2}\leq \varepsilon \left\Vert \Delta
u\right\Vert ^{2}+c\left\Vert u\right\Vert ^{2}\text{.}
\end{equation*}%
Hence 
\begin{equation}
\int_{M}\frac{\left\vert \nabla u\right\vert ^{2}}{\rho ^{\sigma }}%
dv_{g}\leq C\left( 1+\varepsilon \right) \left\Vert \Delta u\right\Vert
^{2}+A\left( \varepsilon \right) \left\Vert u\right\Vert ^{2}  \tag{14}
\label{14}
\end{equation}%
Now if $K(n,2,\sigma )$ denotes the best constant in inequality (\ref{14})
we get for any $\varepsilon >0$%
\begin{equation}
\int_{M}\frac{\left\vert \nabla u\right\vert ^{2}}{\rho ^{\sigma }}%
dv_{g}\leq \left( K(n,2,\sigma )^{2}+\varepsilon \right) \left\Vert \Delta
u\right\Vert ^{2}+A\left( \varepsilon ,\sigma \right) \left\Vert
u\right\Vert ^{2}\text{.}  \tag{15}  \label{15}
\end{equation}%
By the inequalities (\ref{11}), (\ref{13}) and (\ref{15}), we have%
\begin{equation*}
\nu _{1,\sigma ,\mu }\geq \left( 1+a^{-}\max \left( K(n,2,\sigma ),A\left(
\varepsilon ,\sigma \right) \right) +b^{-}\max \left( K(n,2,\mu ),A\left(
\varepsilon ,\mu \right) \right) \right) \left( \left\Vert \Delta u_{\sigma
,\mu }\right\Vert ^{2}+\left\Vert u_{\sigma ,\mu }\right\Vert ^{2}\right)
\end{equation*}%
So if 
\begin{equation*}
1+a^{-}\max \left( K(n,2,\sigma ),A\left( \varepsilon ,\sigma \right)
\right) +b^{-}\max \left( K(n,2,\mu ),A\left( \varepsilon ,\mu \right)
\right) >0
\end{equation*}%
then we get $\lim_{\sigma ,\mu }\left( u_{\sigma ,\mu }\right) =0$ and $%
\left\Vert u_{\sigma ,\mu }\right\Vert =1$ a contradiction.\ \newline
The reflexivity of $H_{2}^{2}(M)$ and the compactness of the embedding $%
H_{2}^{2}(M)\subset H_{p}^{k}(M)$\ ( $k\ =0$,$1$; $p<N$ ), imply that up to
a subsequence we have

$u_{m}\rightarrow u$\ \ weakly in $H_{2}^{2}(M)$

$u_{m}\rightarrow u$ strongly in $L^{p}(M)$, $p<N$

$\nabla u_{m}\rightarrow \nabla u$ strongly in $L^{p}(M)$, $p<2^{\ast }=%
\frac{2n}{n-2}$

$u_{m}\rightarrow u$ a.e. in $M$.

The Br\'{e}zis-Lieb lemma allows us to write 
\begin{equation*}
\int_{M}\left( \Delta _{g}u_{m}\right) ^{2}dv_{g}=\int_{M}\left( \Delta
_{g}u\right) ^{2}dv_{g}+\int_{M}\left( \Delta _{g}(u_{m}-u)\right)
^{2}dv_{g}+o(1)
\end{equation*}%
and also 
\begin{equation*}
\int_{M}f(x)\left\vert u_{m}\right\vert ^{N}dv_{g}=\int_{M}f(x)\left\vert
u\right\vert ^{N}dv_{g}+\int_{M}f(x)\left\vert u_{m}-u\right\vert
^{N}dv_{g}+o(1)\text{.}
\end{equation*}%
Now by the boundedness of the sequence ($u_{m}$)$_{m}$, we have that $%
u_{m}\rightarrow u$\ \ weakly in $H_{2}^{2}(M)$,

$\nabla u_{m}\rightarrow \nabla u$\ \ weakly in $L^{2}(M,\rho ^{-2})$ and $%
u_{m}\rightarrow u$\ \ weakly in $L^{2}(M,\rho ^{-4})$\ i.e. for any $%
\varphi \in L^{2}(M)$%
\begin{equation*}
\int_{M}\frac{a(x)}{\rho ^{2}}\nabla u_{m}\nabla \varphi dv_{g}=\int_{M}%
\frac{a(x)}{\rho ^{2}}\nabla u\nabla \varphi dv_{g}+o(1)
\end{equation*}%
and%
\begin{equation*}
\int_{M}\frac{b(x)}{\rho ^{4}}u_{m}\varphi dv_{g}=\int_{M}\frac{b(x)}{\rho
^{4}}u\varphi dv_{g}+o(1)\text{.}
\end{equation*}%
For every $\phi \in H_{2}^{2}(M)$ we have 
\begin{equation}
\int_{M}\left( \Delta _{g}^{2}u_{m}+div_{g}\left( \frac{a(x)}{\rho ^{\sigma
_{m}}}\nabla _{g}u_{m}\right) +\frac{b(x)}{\rho ^{\delta _{m}}}u_{m}\right)
\phi dv_{g}=\int_{M}\left( \lambda \left\vert u_{m}\right\vert
^{q-2}u_{m}+f(x)\left\vert u_{m}\right\vert ^{N-2}u_{m}\right) \phi dv_{g}%
\text{.}  \tag{16}  \label{16}
\end{equation}%
By the weak convergence in $H_{2}^{2}(M)$ we have immediately 
\begin{equation*}
\int_{M}\phi \Delta _{g}^{2}u_{m}dv_{g}=\int_{M}\phi \Delta
_{g}^{2}udv_{g}+o(1)
\end{equation*}%
and 
\begin{equation*}
\int_{M}\left( \frac{a(x)}{\rho ^{\sigma _{m}}}\nabla _{g}u_{m}-\frac{a(x)}{%
\rho ^{2}}\nabla _{g}u\right) \phi dv_{g}=\int_{M}\left( \frac{a(x)}{\rho
^{\sigma _{m}}}\nabla _{g}u_{m}+\frac{a(x)}{\rho ^{2}}\left( \nabla
_{g}u_{m}-\nabla _{g}u_{m}\right) -\frac{a(x)}{\rho ^{2}}\nabla _{g}u\right)
\phi dv_{g}
\end{equation*}%
Then%
\begin{equation*}
\left\vert \int_{M}\left( \frac{a(x)}{\rho ^{\sigma _{m}}}\nabla _{g}u_{m}-%
\frac{a(x)}{\rho ^{2}}\nabla _{g}u\right) \phi dv_{g}\right\vert \leq
\end{equation*}%
\begin{equation*}
\left\vert \int_{M}\left( \frac{a(x)}{\rho ^{\sigma _{m}}}\nabla _{g}u_{m}-%
\frac{a(x)}{\rho ^{2}}\nabla _{g}u_{m}\right) \phi dv_{g}\right\vert
+\left\vert \int_{M}\left( \frac{a(x)}{\rho ^{2}}\nabla _{g}u_{m}-\frac{a(x)%
}{\rho ^{2}}\nabla _{g}u\right) \phi dv_{g}\right\vert
\end{equation*}%
\begin{equation}
\leq \int_{M}\left\vert a(x)\phi \nabla _{g}u_{m}\right\vert \left\vert 
\frac{1}{\rho ^{\sigma _{m}}}-\frac{1}{\rho ^{2}}\right\vert
dv_{g}+\left\vert \int_{M}\frac{a(x)}{\rho ^{2}}\nabla _{g}\left(
u_{m}-u\right) \phi dv_{g}\right\vert \text{.}  \tag{17}  \label{17}
\end{equation}%
The weak convergence in $L^{2}(M,\rho ^{-2})$ and the Lebesgue's dominated
convergence theorem imply that the second right hand side of (\ref{17}) goes
to $0$. For the third term of the left hand side of (\ref{15}), we write 
\begin{equation*}
\int_{M}\left( \frac{b(x)}{\rho ^{\delta _{m}}}u_{m}-\frac{b(x)}{\rho ^{4}}%
u\right) \phi dv_{g}=\int_{M}\left( \frac{b(x)}{\rho ^{\delta _{m}}}u_{m}-%
\frac{b(x)}{\rho ^{4}}u_{m}+\frac{b(x)}{\rho ^{4}}u_{m}-\frac{b(x)}{\rho ^{4}%
}u\right) \phi dv_{g}
\end{equation*}%
and%
\begin{equation*}
\left\vert \int_{M}\left( \frac{b(x)}{\rho ^{\delta _{m}}}u_{m}-\frac{b(x)}{%
\rho ^{4}}u\right) \phi dv_{g}\right\vert
\end{equation*}%
\begin{equation}
\leq \int_{M}\left\vert b(x)\phi u_{m}\right\vert \left\vert \frac{1}{\rho
^{\delta _{m}}}-\frac{1}{\rho ^{4}}\right\vert dv_{g}+\left\vert \int_{M}%
\frac{b(x)}{\rho ^{4}}\left( u_{m}-u\right) \phi dv_{g}\right\vert \text{.} 
\tag{18}  \label{18}
\end{equation}%
Here also the weak convergence in $L^{2}(M,\rho ^{-4})$ and the Lebesgue's
dominated convergence allows us to affirm that the left hand side of (\ref%
{18}) converges to $0$.

It remains to show that $\mu _{m}\rightarrow 0$ as $m$ $\rightarrow +\infty $
and $u_{m}\rightarrow u$ strongly in $H_{2}^{2}\left( M\right) $ but this is
the same as in the proof of Theorem \ref{thm3} which implies also $u\in
M_{\lambda }.$
\end{proof}

\section{Test Functions}

In this section, we give the proof of the main theorem to do so, we consider
a normal geodesic coordinate system centred at $x_{o}$.\ Denote by $%
S_{x_{o}}(\rho )$ the geodesic sphere centred at $x_{o}$ and of radius $\rho 
$ ( $\rho <d$ =the injectivity radius). Let $d\Omega $ be the volume element
of the $n-1$-dimensional Euclidean unit sphere $S^{n-1}$ and put 
\begin{equation*}
G(\rho )=\frac{1}{\omega _{n-1}}\dint\limits_{S(\rho )}\sqrt{\left\vert
g(x)\right\vert }d\Omega
\end{equation*}%
where $\omega _{n-1}$ is the volume of $S^{n-1}$ and $\left\vert
g(x)\right\vert $ the determinant of the Riemannian metric $g$. The Taylor's
expansion of $G(\rho )$ in a neighborhood of $x_{o}$ is given by 
\begin{equation*}
G(\rho )=1-\frac{S_{g}(x_{\circ })}{6n}\rho ^{2}+o(\rho ^{2})
\end{equation*}%
where $S_{g}(x_{\circ })$ denotes the scalar curvature of $M$ at $x_{\circ }$%
.$\newline
$Let $B(x_{\circ },\delta )$ be the geodesic ball centred at $x_{\circ }$
and of radius $\delta $ such that $0<2\delta <d$ and denote by $\eta $ a
smooth function on $M$ such that 
\begin{equation*}
\eta (x)=\left\{ 
\begin{array}{c}
1\text{ \ \ \ \ \ \ \ \ on }B(x_{o},\delta ) \\ 
0\text{ \ \ on }M-B(x_{o},2\delta )%
\end{array}%
\right. \text{.}
\end{equation*}%
Consider the following radial function

\begin{equation*}
u_{\epsilon }(x)=(\frac{(n-4)n(n^{2}-4)\epsilon ^{4}}{f(x_{\circ })})^{\frac{%
n-4}{8}}\frac{\eta (\rho )}{(\left( \rho \theta \right) ^{2}+\epsilon ^{2})^{%
\frac{n-4}{2}}}
\end{equation*}%
with%
\begin{equation*}
\theta =\left( 1+\left\Vert a\right\Vert _{r}+\left\Vert b\right\Vert
_{s}\right) ^{\frac{1}{n}}
\end{equation*}%
where $\rho =d(x_{o},x)$ is the distance from $x_{o}$ to $x$ and $f(x_{\circ
})=\max_{x\in M}f(x)$. For further computations we need the following
integrals: for any real positive numbers $p$, $g$ such that $p-q>1$ we put

\begin{equation*}
I_{p}^{q}=\int_{0}^{+\infty }\frac{t^{q}}{(1+t)^{p}}dt
\end{equation*}%
and the following relations are immediate 
\begin{equation*}
I_{p+1}^{q}=\frac{p-q-1}{p}I_{p}^{q}\text{ \ \ \ and\ \ \ \ \ }I_{p+1}^{q+1}=%
\frac{q+1}{p-q-1}I_{p+1}^{q}\text{.}
\end{equation*}

\section{Application to compact Riemannian manifolds of dimension $n>6$}

\begin{theorem}
\label{thm5} Let $\left( M,g\right) $ be a compact Riemannian manifold of
dimension $\ n>6$. Suppose that at a point $x_{o}$ where $f$ attains its
maximum the following condition%
\begin{equation*}
\frac{\Delta f(x_{o})}{f\left( x_{o}\right) }<\frac{1}{3}\left( \frac{%
(n-1)n\left( n^{2}+4n-20\right) }{\left( n^{2}-4\right) \left( n-4\right)
\left( n-6\right) }\frac{1}{\left( 1+\left\Vert a\right\Vert _{r}+\left\Vert
b\right\Vert _{s}\right) ^{\frac{4}{n}}}-1\right) S_{g}\left( x_{o}\right)
\end{equation*}%
holds . Then the equation (\ref{1}) has a non trivial solution with energy%
\begin{equation*}
J_{\lambda }(u)<\frac{1}{K_{\circ }^{\frac{n}{4}}\left( \max_{x\in
M}f(x)\right) ^{\frac{n}{4}-1}}\text{.}
\end{equation*}
\end{theorem}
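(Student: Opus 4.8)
The plan is to exhibit the test functions $u_\epsilon$ defined in Section 8 as explicit competitors showing that the mountain-pass (or constrained-minimum) energy level $c$ lies strictly below the threshold $\dfrac{2}{n\,K_\circ^{n/4}(f(x_\circ))^{(n-4)/4}}$ appearing in condition (\ref{C1}) of Theorem \ref{thm3}, thereby upgrading that generic existence statement into a genuine non-trivial solution. Concretely, I would first compute the asymptotic expansions as $\epsilon\to 0$ of the three basic integrals attached to $u_\epsilon$: the bilaplacian energy $\int_M(\Delta_g u_\epsilon)^2\,dv_g$, the critical-exponent term $\int_M f(x)|u_\epsilon|^N\,dv_g$, and the lower-order contributions $\int_M a(x)|\nabla_g u_\epsilon|^2\,dv_g$ and $\int_M b(x)u_\epsilon^2\,dv_g$ coming from the singular (or smooth) potentials. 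The radial profile $(( \rho\theta)^2+\epsilon^2)^{-(n-4)/2}$ is, up to the scaling factor $\theta$, the extremal for the Euclidean $H_2^2\hookrightarrow L^N$ inequality found by Edmunds--Fortunato--Jannelli, so the leading terms reproduce $K_\circ$; the job is to track the $O(\epsilon^2)$ and, when $n>6$, the $O(\epsilon^4\log\epsilon^{-1})$ or $O(\epsilon^4)$ corrections in which the scalar curvature $S_g(x_\circ)$ and the Laplacian $\Delta f(x_\circ)$ enter through the volume-element expansion $G(\rho)=1-\frac{S_g(x_\circ)}{6n}\rho^2+o(\rho^2)$ and the Taylor expansion of $f$ about its maximum $x_\circ$.

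The key computational steps, in order, are: (1) expand $(\Delta_g u_\epsilon)^2$ using geodesic normal coordinates, noting that $\Delta_g$ acting on a radial function differs from the Euclidean radial Laplacian by terms governed by $\partial_\rho\log\sqrt{|g|}$, hence by $S_g(x_\circ)\rho^2$; integrate against $\sqrt{|g|}\,d\Omega\,d\rho$, substitute $t=(\rho\theta/\epsilon)^2$, and read off the coefficients in terms of the integrals $I_p^q$ introduced in Section 8 together with their recursion relations $I_{p+1}^q=\frac{p-q-1}{p}I_p^q$ and $I_{p+1}^{q+1}=\frac{q+1}{p-q-1}I_{p+1}^q$. (2) Do the same for $\int f|u_\epsilon|^N$, using $f(x)=f(x_\circ)+\frac12\nabla^2 f(x_\circ)(\rho\theta,\cdot)+o(\rho^2)$ so that the $\Delta f(x_\circ)/f(x_\circ)$ term surfaces at order $\epsilon^2$. (3) Bound the potential terms: since $a\in L^r$, $b\in L^s$ with $r>n/2$, $s>n/4$, a Hölder estimate shows $\int a|\nabla_g u_\epsilon|^2$ and $\int b u_\epsilon^2$ are of lower order than $\epsilon^2$ for $n>6$ — they contribute to the constant $(1+\|a\|_r+\|b\|_s)^{4/n}$ only through the normalization $\theta$ hidden in $u_\epsilon$, which is precisely why $\theta$ was chosen that way and why that factor shows up in condition (\ref{C}). (4) Assemble the quotient $J_\lambda(u_\epsilon)$ along the $M_\lambda$-fibre (using the identity $J_\lambda(u)=\frac{N-2}{2N}\|u\|^2-\lambda\frac{N-q}{Nq}\int|u|^q$ for $u\in M_\lambda$ derived in Section 6) and show that for $\epsilon$ small and $\lambda$ small the value drops below $\frac{2}{nK_\circ^{n/4}(f(x_\circ))^{(n-4)/4}}$ exactly when the displayed curvature inequality on $\Delta f(x_\circ)/f(x_\circ)$ holds; then invoke Theorem \ref{thm3}.

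I expect the main obstacle to be step (1) together with the bookkeeping in step (4): getting the coefficient of the $\epsilon^2$ term in $\int(\Delta_g u_\epsilon)^2$ exactly right, because $\Delta_g$ on a manifold introduces a cross term between the geometry (via $\sqrt{|g|}$) and the curvature of the profile, and a sign error or a misplaced factor of $n$ there propagates directly into the constant $\frac{n(n^2+4n-20)}{3(n+2)(n-4)(n-6)}$ in condition (\ref{C}). A secondary subtlety is ensuring the cutoff $\eta$ and the truncation of $\rho$ at the injectivity radius contribute only $O(\epsilon^{n-4})$ errors — negligible for $n>6$ since $n-4>2$ — so that the genuine competition is between the $\epsilon^0$ term (which gives $K_\circ$) and the $\epsilon^2$ term (which carries $S_g(x_\circ)$, $\Delta f(x_\circ)$, and the $\theta$-factor). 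Once these expansions are in hand, the comparison with the threshold and the appeal to Theorem \ref{thm3} are essentially formal, and the restriction $n>6$ (versus the separate $n=6$ case, where the $\epsilon^2$ and cutoff errors become comparable and only $S_g(x_\circ)>0$ is needed) is exactly the regime in which this clean separation of scales works.
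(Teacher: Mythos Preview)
Your overall strategy matches the paper's: verify condition (\ref{C1}) of Theorem \ref{thm3} by expanding the test-function integrals and showing the constrained infimum falls below the threshold. However, step (3) contains a genuine misconception that would derail the bookkeeping. The potential terms $\int_M a|\nabla u_\epsilon|^2$ and $\int_M b\,u_\epsilon^2$ are \emph{not} of lower order than $\epsilon^2$: H\"older's inequality gives them relative size $\epsilon^{2-n/r}$ and $\epsilon^{4-n/s}$, and since only $r>n/2$, $s>n/4$ are assumed, the first exponent always lies in $(0,2)$, so these terms are $o(1)$ but \emph{dominate} the $\epsilon^2$ curvature correction. Nor do they enter ``only through the normalization $\theta$'': the scaling $\rho\mapsto\rho\theta$ in $u_\epsilon$ produces the same $\theta^{-n}$ prefactor on \emph{every} integral, including $\int f|u_\epsilon|^N$, so by itself it cancels when you maximize in $t$. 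The actual mechanism in the paper is that the $o(1)$ potential contributions perturb the leading coefficient $\alpha$ in $\tfrac{t^2}{2}\alpha-\tfrac{t^N}{N}$ from $1$ up to at most $\theta^4=(1+\|a\|_r+\|b\|_s)^{4/n}$ for $\epsilon$ small; the $\theta^{-n}$ prefactor is chosen to absorb the resulting $\alpha^{n/4}\leq\theta^n$ after the one-variable maximization, and the factor $(1+\|a\|_r+\|b\|_s)^{4/n}$ survives in the final inequality because the maximizer $t_o=\alpha^{(n-4)/8}$ is shifted, which alters the ratio $t_o^N/t_o^2=\alpha$ weighting the $\Delta f$ term against the $S_g$ term in the $\epsilon^2$-coefficient $R(t_o)$.

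Two smaller points. In step (4) the paper does not use the $M_\lambda$ identity: it simply bounds $J_\lambda(tu_\epsilon)\leq J_0(tu_\epsilon)$ (the $\lambda$-term only helps) and maximizes $J_0(tu_\epsilon)$ freely over $t>0$; Lemma \ref{lem} guarantees some $t_1 u_\epsilon\in M_\lambda$, whence $c\leq J_\lambda(t_1u_\epsilon)\leq\sup_{t>0}J_0(tu_\epsilon)$. This is cleaner than tracking the constraint. And the remark about $O(\epsilon^4\log\epsilon^{-1})$ corrections is misplaced: for $n>6$ the competing correction is purely $O(\epsilon^2)$; the logarithm appears only in the separate $n=6$ analysis.
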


\begin{proof}
The proof of Theorem \ref{thm5} reduces to show that the condition (\ref{C1}%
) of Theorem \ref{thm3} is satisfied and since by Lemma \ref{lem} there is a 
$t_{o}>0$ such that $t_{o}u_{\epsilon }\in M_{\lambda }$ for sufficiently
small $\lambda $, so it suffices to show that%
\begin{equation*}
\sup_{t>0}J_{\lambda }\left( tu_{\epsilon }\right) <\frac{1}{K_{\circ }^{%
\frac{n}{4}}\left( \max_{x\in M}f(x)\right) ^{\frac{n}{4}-1}}\text{.}
\end{equation*}%
To compute the term $\int_{M}f(x)\left\vert u_{\epsilon }(x)\right\vert
^{N}dv_{g}$, we need the following Taylor's expansion of $f$ at the point $%
x_{o}$ 
\begin{equation*}
f(x)=f(x_{\circ })+\frac{\partial ^{2}f(x_{\circ })}{2\partial y^{i}\partial
y^{j}}y^{i}y^{j}+o(\rho ^{2})
\end{equation*}%
and also that of the Riemannian measure%
\begin{equation*}
dv_{g}=1-\frac{1}{6}R_{ij}(x_{\circ })y^{i}y^{j}+o(\rho ^{2})
\end{equation*}%
where $R_{ij}(x_{\circ })$ denotes the Ricci tensor at $x_{\circ }$. The
expression of $\int_{M}f(x)\left\vert u_{\epsilon }(x)\right\vert ^{N}dv_{g}$
is well known (see for example \cite{11} ) and is given in case $n>6$ by%
\begin{equation*}
\int_{M}f(x)\left\vert u_{\epsilon }(x)\right\vert ^{N}dv_{g}=\frac{\theta
^{-n}}{K_{\circ }^{\frac{n}{4}}(f(x_{\circ }))^{\frac{n-4}{4}}}\left( 1-(%
\frac{\Delta f(x_{\circ })}{2(n-2)f(x_{\circ })}+\frac{S_{g}(x_{\circ })}{%
6(n-2)})\epsilon ^{2}+o(\epsilon ^{2})\right)
\end{equation*}%
where $K_{o}$ is given by (\ref{K}) and $\omega _{n}=2^{n-1}I_{n}^{\frac{n}{2%
}-1}\omega _{n-1}$ and $\omega _{n}$ is the volume of $S^{n}$, the standard
unit sphere of $R^{n+1}$ endowed with its round metric.

Now the restriction of $\left\vert \frac{\partial u_{\epsilon }}{\partial
\rho }\right\vert $ to the geodesic ball $B(x_{\circ },\delta )$ is computed
as follows 
\begin{equation*}
\left\vert \frac{\partial u_{\epsilon }}{\partial \rho }\right\vert
_{B(x_{\circ },\delta )}=\left\vert \nabla u_{\epsilon }\right\vert =\theta
^{-2}(n-4)\left( \frac{(n-4)n(n^{2}-4)\epsilon ^{4}}{f(x_{\circ })}\right) ^{%
\frac{n-4}{8}}\frac{\rho }{(\left( \frac{\rho }{\theta }\right)
^{2}+\epsilon ^{2})^{\frac{n-2}{2}}}
\end{equation*}%
and Since $a\in L^{r}(M)$ with $r>\frac{n}{2}$ we have 
\begin{equation*}
\int_{B(x_{\circ },\delta )}a(x)\left\vert \nabla u_{\epsilon }\right\vert
^{2}dv_{g}\leq \theta ^{-4}(n-4)^{2}\left( \frac{(n-4)n(n^{2}-4)\epsilon ^{4}%
}{f(x_{\circ })}\right) ^{\frac{n-4}{4}}\left\Vert a\right\Vert _{r}\omega
_{n-1}^{1-\frac{1}{r}}
\end{equation*}%
\begin{equation*}
\times \left( \int_{0}^{\delta }\frac{\rho ^{\frac{2r}{r-1}+n-1}}{(\left( 
\frac{\rho }{\theta }\right) ^{2}+\epsilon ^{2})^{\frac{\left( n-2\right) r}{%
r-1}}}\left( \int_{S(\rho )}\sqrt{\left\vert g(x)\right\vert }d\Omega
\right) d\rho \right) ^{\frac{r-1}{r}}
\end{equation*}%
Since 
\begin{equation*}
\dint\limits_{S(\rho )}\sqrt{\left\vert g(x)\right\vert }d\Omega =\omega
_{n-1}\left( 1-\frac{S_{g}(x_{\circ })}{6n}\rho ^{2}+o(\rho ^{2})\right)
\end{equation*}%
we get%
\begin{equation*}
\int_{B(x_{\circ },\delta )}a(x)\left\vert \nabla u_{\epsilon }\right\vert
^{2}dv_{g}\leq \theta ^{-4}(n-4)^{2}\left( \frac{(n-4)n(n^{2}-4)\epsilon ^{4}%
}{f(x_{\circ })}\right) ^{\frac{n-4}{4}}\left\Vert a\right\Vert _{r}\omega
_{n-1}^{1-\frac{1}{r}}
\end{equation*}%
\begin{equation*}
\times \left( \int_{0}^{\delta }\frac{\rho ^{\frac{2r}{r-1}+n-1}}{(\left(
\rho \theta \right) ^{2}+\epsilon ^{2})^{\frac{\left( n-2\right) r}{r-1}}}%
d\rho \left( 1-\frac{S_{g}(x_{\circ })}{6n}\rho ^{2}+o(\rho ^{2})\right)
\right) ^{\frac{r-1}{r}}
\end{equation*}%
and by the following change of variable 
\begin{equation*}
t=(\frac{\rho \theta }{\epsilon })^{2}\text{ i.e. \ }\rho =\frac{\epsilon }{%
\theta }\sqrt{t}
\end{equation*}%
we obtain%
\begin{equation*}
\int_{B(x_{\circ },\delta )}a(x)\left\vert \nabla u_{\epsilon }\right\vert
^{2}dv_{g}\leq \theta ^{-n\frac{r}{r-1}}(n-4)^{2}\left( \frac{%
(n-4)n(n^{2}-4)\epsilon ^{4}}{f(x_{\circ })}\right) ^{\frac{n-4}{4}%
}\left\Vert a\right\Vert _{r}\omega _{n-1}^{1-\frac{1}{r}}\epsilon ^{-\left(
n-4\right) +2-\frac{n}{r}}
\end{equation*}%
\begin{equation*}
\times \left( \int_{0}^{(\frac{\delta \theta }{\epsilon })^{2}}\frac{t^{%
\frac{n-2}{2}+\frac{r}{r-1}}}{(t+1)^{\frac{\left( n-2\right) r}{r-1}}}dt-%
\frac{S_{g}(x_{\circ })}{6n}\theta ^{-2}\epsilon ^{2}\int_{0}^{(\frac{\delta
\theta }{\epsilon })^{2}}\frac{t^{\frac{n}{2}+\frac{r}{r-1}}}{(t+1)^{\frac{%
\left( n-2\right) r}{r-1}}}dt+o(\epsilon ^{2})\right) ^{\frac{r-1}{r}}
\end{equation*}%
letting $\epsilon \rightarrow 0$ we get%
\begin{equation*}
\int_{B(x_{\circ },\delta )}a(x)\left\vert \nabla u_{\epsilon }\right\vert
^{2}dv_{g}\leq 2^{-1+\frac{1}{r}}\theta ^{-n\left( 1-\frac{1}{r}\right)
}(n-4)^{2}\left( \frac{(n-4)n(n^{2}-4)\epsilon ^{4}}{f(x_{\circ })}\right) ^{%
\frac{n-4}{4}}\left\Vert a\right\Vert _{r}\omega _{n-1}^{1-\frac{1}{r}%
}\epsilon ^{-\left( n-4\right) +2-\frac{n}{r}}
\end{equation*}%
\begin{equation*}
\times \left( I_{\frac{\left( n-2\right) r}{r-1}}^{\frac{n-2}{2}+\frac{r}{r-1%
}}-\theta ^{-2}\frac{S_{g}(x_{\circ })}{6n}I_{\frac{\left( n-2\right) r}{r-1}%
}^{\frac{n}{2}+\frac{r}{r-1}}\epsilon ^{2}+o(\epsilon ^{2})\right) ^{\frac{%
r-1}{r}}
\end{equation*}%
Then%
\begin{equation*}
\int_{B(x_{\circ },\delta )}a(x)\left\vert \nabla u_{\epsilon }\right\vert
^{2}dv_{g}\leq 2^{-1+\frac{1}{r}}\theta ^{-n\frac{r}{r-1}}(n-4)^{2}\left( 
\frac{(n-4)n(n^{2}-4)\epsilon ^{4}}{f(x_{\circ })}\right) ^{\frac{n-4}{4}%
}\left\Vert a\right\Vert _{r}\omega _{n-1}^{1-\frac{1}{r}}\epsilon
^{\epsilon ^{-\left( n-4\right) +2-\frac{n}{r}}}
\end{equation*}%
\begin{equation*}
\times I_{\frac{\left( n-2\right) r}{r-1}}^{1+\frac{n-2}{2}.\frac{r-1}{r}}%
\left[ 1-\frac{r-1}{r}\theta ^{2}\frac{S_{g}(x_{\circ })}{6n}I_{\frac{\left(
n-2\right) r}{r-1}}^{\frac{n}{2}+\frac{r}{r-1}}I_{\frac{\left( n-2\right) r}{%
r-1}}^{-\frac{n-2}{2}-\frac{r}{r-1}}\epsilon ^{2}+o(\epsilon ^{2})\right] 
\text{.}
\end{equation*}%
It remains to compute the integral $\int_{B(x_{\circ },2\delta )-B(x_{\circ
},\delta )}a(x)\left\vert \nabla u_{\epsilon }\right\vert ^{2}dv_{g}$.%
\newline
First we remark that 
\begin{equation*}
\left\vert \int_{(\frac{\delta \theta }{\epsilon })^{2}}^{(\frac{2\delta
\theta }{\epsilon })^{2}}h(t)\frac{t^{q}}{(t+1)^{p}}dt\right\vert \leq
C\left( \frac{1}{\epsilon }\right) ^{2(q-p+1)}=C\epsilon ^{2(p-q-1)}
\end{equation*}%
and since $p-q=n-4\geq 3$, we obtain%
\begin{equation*}
\int_{(\frac{\delta \theta }{\epsilon })^{2}}^{(\frac{2\delta \theta }{%
\epsilon })^{2}}h(t)\frac{i^{q}}{(t+1)^{p}}dt=o(\epsilon ^{2})
\end{equation*}%
and then 
\begin{equation}
\int_{B(x_{\circ },2\delta )-B(x_{\circ },\delta )}a(x)\left\vert \nabla
u_{\epsilon }\right\vert ^{2}dv_{g}=o(\epsilon ^{2})\text{.}  \tag{19}
\label{19}
\end{equation}%
Finally we get $\ $%
\begin{equation*}
\int_{M}a(x)\left\vert \nabla u_{\epsilon }\right\vert ^{2}dv_{g}\leq 2^{-1+%
\frac{1}{r}}\theta ^{-n\frac{r}{r-1}}(n-4)^{2}\left( \frac{%
(n-4)n(n^{2}-4)\epsilon ^{4}}{f(x_{\circ })}\right) ^{\frac{n-4}{4}%
}\left\Vert a\right\Vert _{r}\omega _{n-1}^{1-\frac{1}{r}}\epsilon ^{-\left(
n-4\right) +2-\frac{n}{r}}
\end{equation*}%
\begin{equation*}
\times \left( I_{\frac{\left( n-2\right) r}{r-1}}^{1+\frac{n-2}{2}.\frac{r-1%
}{r}}+o(\epsilon ^{2})\right) \text{.}
\end{equation*}%
Letting 
\begin{equation}
A=K_{\circ }^{\frac{n}{4}}\frac{(n-4)^{\frac{n}{4}+1}\times \left( \omega
_{n-1}\right) ^{\frac{r-1}{r}}}{2^{\frac{r-1}{r}}}(n(n^{2}-4))^{\frac{n-4}{4}%
}\left( I_{\frac{\left( n-2\right) r}{r-1}}^{\frac{n-2}{2}+\frac{r}{r-1}%
}\right) ^{\frac{r-1}{r}}  \tag{20}  \label{20}
\end{equation}%
we obtain 
\begin{equation*}
\int_{M}a(x)\left\vert \nabla u_{\epsilon }\right\vert ^{2}dv_{g}\leq
\epsilon ^{2-\frac{n}{r}}\theta ^{-n\frac{r}{r-1}}\frac{A}{\text{ }K_{\circ
}^{\frac{n}{4}}(f(x_{\circ }))^{\frac{n-4}{4}}}\left\Vert a\right\Vert
_{r}\left( 1+o(\epsilon ^{2})\right) \text{.}
\end{equation*}%
Now we compute%
\begin{equation*}
\int_{M}b(x)u_{\epsilon }^{2}dv_{g}=\int_{B(x_{\circ },\delta
)}b(x)u_{\epsilon }^{2}dv_{g}+\int_{B(x_{\circ },2\delta )-B(x_{\circ
},\delta )}b(x)u_{\epsilon }^{2}dv_{g}
\end{equation*}%
and since $b\in L^{s}(M)$ with $s>\frac{n}{4}$, we have 
\begin{equation*}
\int_{M}b(x)u_{\epsilon }^{2}dv_{g}\leq \left\Vert b\right\Vert
_{s}\left\Vert u_{\epsilon }\right\Vert _{\frac{2s}{s-1}}^{2}\text{.}
\end{equation*}

Independently%
\begin{equation*}
=\left( \frac{\left( n-4\right) n\left( n^{2}-4\right) \epsilon ^{4}}{%
f(x_{o})}\right) ^{\frac{n-4}{4}}
\end{equation*}%
\begin{equation*}
\left\Vert u_{\epsilon }\right\Vert _{\frac{2s}{s-1},B(x_{o},\delta
)}^{2}=\left( \frac{(n-4)n(n^{2}-4)\epsilon ^{4}}{f(x_{\circ })}\right) ^{%
\frac{n-4}{4}}\left( \int_{0}^{\delta }\frac{\rho ^{n-1}}{(\left( \rho
\theta \right) ^{2}+\epsilon ^{2})^{\frac{\left( n-4\right) s}{(s-1)}}}%
\left( \int_{S(r)}\sqrt{\left\vert g(x)\right\vert }d\Omega \right)
dr\right) ^{\frac{s-1}{s}}
\end{equation*}%
and%
\begin{equation*}
\dint\limits_{S(r)}\sqrt{\left\vert g(x)\right\vert }d\Omega =\omega
_{n-1}\left( 1-\frac{S_{g}(x_{\circ })}{6n}\rho ^{2}+o(\rho ^{2})\right) 
\text{.}
\end{equation*}%
consequently%
\begin{equation*}
\left\Vert u_{\epsilon }\right\Vert _{\frac{2s}{s-1},B(x_{o},\delta
)}^{2}=\left( \frac{(n-4)n(n^{2}-4)\epsilon ^{4}}{f(x_{\circ })}\right) ^{%
\frac{n-4}{4}}\omega _{n-1}^{\frac{s-1}{s}}\times
\end{equation*}%
\begin{equation*}
\left( \int_{0}^{\delta }\frac{\rho ^{n-1}}{(\left( \rho \theta \right)
^{2}+\epsilon ^{2})^{\frac{\left( n-4\right) s}{(s-1)}}}\left( 1-\frac{%
S_{g}(x_{\circ })}{6n}\rho ^{2}+o(\rho ^{2})\right) d\rho \right) ^{\frac{s-1%
}{s}}\text{.}
\end{equation*}%
And putting $t=(\frac{\rho \theta }{\epsilon })^{2}$ , we get 
\begin{equation*}
\left\Vert u_{\epsilon }\right\Vert _{\frac{2s}{s-1},B(x_{o},\delta
)}^{2}=\left( \frac{(n-4)n(n^{2}-4)\epsilon ^{4}}{f(x_{\circ })}\right) ^{%
\frac{n-4}{4}}\left( \omega _{n-1}\right) ^{\frac{s-1}{s}}\epsilon ^{-n+4+4-%
\frac{n}{s}}\times
\end{equation*}%
\begin{equation*}
\left( \frac{\epsilon ^{n}\theta ^{-n}}{2}\int_{0}^{(\frac{\delta \theta }{%
\epsilon })^{2}}\frac{t^{\frac{n}{2}-1}}{(t+1)^{\frac{\left( n-4\right) s}{%
(s-1)}}}dt-\frac{\theta ^{-n-2}S_{g}(x_{\circ })}{12n}\epsilon
^{n+2}\int_{0}^{(\frac{\delta \theta }{\epsilon })^{2}}\frac{t^{\frac{n}{2}}%
}{(t+1)^{\frac{\left( n-4\right) s}{(s-1)}}}dt+o(\epsilon ^{n+2})\right) ^{%
\frac{s-1}{s}}\text{.}
\end{equation*}%
Letting $\epsilon \rightarrow 0$, we get%
\begin{equation*}
\left\Vert u_{\epsilon }\right\Vert _{\frac{2s}{s-1},B\left( x_{o},\delta
\right) }^{2}=\left( \frac{(n-4)n(n^{2}-4)\epsilon ^{4}}{f(x_{\circ })}%
\right) ^{\frac{n-4}{4}}\left( \omega _{n-1}\right) ^{\frac{s-1}{s}}\epsilon
^{-n+4+4-\frac{n}{s}}
\end{equation*}%
\begin{equation*}
\times \theta ^{-n\frac{s}{s-1}}(\frac{\epsilon ^{n}}{2})^{\frac{s-1}{s}%
}\left( \int_{0}^{+\infty }\frac{t^{\frac{n}{2}}}{(t+1)^{\frac{\left(
n-4\right) s}{(s-1)}}}dt-\frac{S_{g}(x_{\circ })}{12n}\epsilon ^{2}\theta
^{-2}\int_{0}^{+\infty }\frac{t^{\frac{n}{2}+1}}{(t+1)^{\frac{\left(
n-4\right) s}{(s-1)}}}dt+o(\epsilon ^{2})\right) ^{\frac{s-1}{s}}\text{.}
\end{equation*}%
Hence 
\begin{equation*}
\left\Vert u_{\epsilon }\right\Vert _{\frac{2s}{s-1},B(x_{o},\delta
)}^{2}=\left( \frac{(n-4)n(n^{2}-4)\epsilon ^{4}}{f(x_{\circ })}\right) ^{%
\frac{n-4}{4}}\left( \omega _{n-1}\right) ^{\frac{s-1}{s}}\epsilon ^{-n+4+4-%
\frac{n}{s}}
\end{equation*}%
\begin{equation*}
\times \theta ^{-n\frac{s}{s-1}}(\frac{\epsilon ^{n}}{2})^{\frac{s-1}{s}%
}\left( \int_{0}^{+\infty }\frac{t^{\frac{n}{2}}}{(t+1)^{\frac{\left(
n-4\right) s}{(s-1)}}}dt-\theta ^{-2}\frac{S_{g}(x_{\circ })}{12n}\epsilon
^{2}\int_{0}^{+\infty }\frac{t^{\frac{n}{2}+1}}{(t+1)^{\frac{\left(
n-4\right) s}{(s-1)}}}dt+o(\epsilon ^{2})\right) ^{\frac{s-1}{s}}\text{.}
\end{equation*}%
Or%
\begin{equation*}
\left\Vert u_{\epsilon }\right\Vert _{\frac{2s}{s-1}}^{2}=\left( \frac{%
(n-4)n(n^{2}-4)}{f(x_{\circ })}\right) ^{\frac{n-4}{4}}\left( \frac{\omega
_{n-1}}{2}\right) ^{\frac{s-1}{s}}\epsilon ^{4-\frac{n}{s}}\theta ^{-n\frac{s%
}{s-1}}
\end{equation*}%
\begin{equation*}
\times \left[ \left( I_{\frac{\left( n-4\right) s}{(s-1)}}^{\frac{n}{2}%
}\right) ^{\frac{s-1}{s}}-\frac{\theta ^{-2}(s-1)S_{g}(x_{\circ })}{12n\text{
}s}\left( I_{\frac{\left( n-4\right) s}{(s-1)}}^{\frac{n}{2}}\right) ^{-%
\frac{1}{s}}I_{\frac{\left( n-4\right) s}{(s-1)}}^{\frac{n}{2}+1}\epsilon
^{2}+o(\epsilon ^{2})\right]
\end{equation*}%
Finally, by the same manner as in equality (\ref{19}) we get%
\begin{equation*}
\int_{M}b(x)u_{\epsilon }^{2}dv_{g}\leq \left\Vert b\right\Vert _{s}\left( 
\frac{(n-4)n(n^{2}-4)}{f(x_{\circ })}\right) ^{\frac{n-4}{4}}(\frac{\omega
_{n-1}}{2})^{\frac{s-1}{s}}\epsilon ^{4-\frac{n}{s}}\theta ^{-n\frac{s}{s-1}}
\end{equation*}%
\begin{equation*}
\times \left( \left( I_{\frac{\left( n-4\right) s}{(s-1)}}^{\frac{n}{2}%
}\right) ^{\frac{s-1}{s}}+o(\epsilon ^{2})\right)
\end{equation*}%
Putting 
\begin{equation}
B=K_{\circ }^{\frac{n}{4}}((n-4)n(n^{2}-4))^{\frac{n-4}{4}}(\frac{\omega
_{n-1}}{2})^{\frac{s-1}{s}}\left( I_{\frac{\left( n-4\right) s}{(s-1)}}^{%
\frac{n}{2}}\right) ^{\frac{s-1}{s}}  \tag{21}  \label{21}
\end{equation}%
we get%
\begin{equation*}
\int_{M}b(x)u_{\epsilon }^{2}dv_{g}\leq \epsilon ^{4-\frac{n}{s}}\theta ^{-n%
\frac{s}{s-1}}\frac{\left\Vert b\right\Vert _{s}B}{\text{ }K_{\circ }^{\frac{%
n}{4}}(f(x_{\circ }))^{\frac{n-4}{4}}}\left( 1+o(\epsilon ^{2})\right) \text{%
.}
\end{equation*}%
The computation of $\int_{M}\left( \Delta u_{\epsilon }\right) ^{2}dv_{g}$
is well known see for example (\cite{11})\ and is given by%
\begin{equation*}
\int_{M}\left( \Delta u_{\epsilon }\right) ^{2}dv_{g}=\frac{\theta ^{-n}}{%
K_{\circ }^{\frac{n}{4}}(f(x_{\circ }))^{\frac{n-4}{4}}}\left( 1-\frac{%
n^{2}+4n-20}{6(n^{2}-4)(n-6)}S_{g}(x_{\circ })\epsilon ^{2}+o(\epsilon
^{2})\right) \text{.}
\end{equation*}%
Resuming we get%
\begin{equation*}
\int_{M}\left( \Delta u_{\epsilon }\right) ^{2}-a(x)\left\vert \nabla
u_{\epsilon }\right\vert ^{2}+b(x)u_{\epsilon }^{2}dv_{g}\leq \frac{\theta
^{-n}}{K_{\circ }^{\frac{n}{4}}f(x_{\circ })^{\frac{n-4}{4}}}\times
\end{equation*}%
\begin{equation*}
\left( 1+\epsilon ^{2-\frac{n}{r}}\theta ^{-\frac{n}{r-1}}A\left\Vert
a\right\Vert _{r}+\epsilon ^{4-\frac{n}{s}}\theta ^{-\frac{n}{s-1}%
}B\left\Vert b\right\Vert _{s}-\frac{n^{2}+4n-20}{6(n^{2}-4)(n-6)}%
S_{g}(x_{\circ })\epsilon ^{2}+o(\epsilon ^{2})\right) \text{.}
\end{equation*}%
Now, we have 
\begin{equation*}
J_{\lambda }\left( tu_{\epsilon }\right) \leq J_{o}\left( tu_{\epsilon
}\right) =\frac{t^{2}}{2}\left\Vert u_{\epsilon }\right\Vert ^{2}-\frac{t^{N}%
}{N}\int_{M}f(x)\left\vert u_{\epsilon }(x)\right\vert ^{N}dv_{g}
\end{equation*}%
\begin{equation*}
\leq \frac{\theta ^{-n}}{K_{\circ }^{\frac{n}{4}}f(x_{\circ })^{\frac{n-4}{4}%
}}\left\{ \frac{1}{2}t^{2}\left( 1+\epsilon ^{2-\frac{n}{r}}\theta ^{-\frac{n%
}{r-1}}A\left\Vert a\right\Vert _{r}+\epsilon ^{4-\frac{n}{s}}\theta ^{-%
\frac{n}{s-1}}B\left\Vert b\right\Vert _{s}\right) -\frac{t^{N}}{N}\right.
\end{equation*}%
\begin{equation*}
\left. +\left[ \left( \frac{\Delta f(x_{o})}{2\left( n-2\right) f(x_{o})}+%
\frac{S_{g}\left( x_{o}\right) }{6\left( n-1\right) }\right) \frac{t^{N}}{N}-%
\frac{1}{2}t^{2}\frac{n^{2}+4n-20}{6\left( n^{2}-4\right) \left( n-6\right) }%
S_{g}\left( x_{o}\right) \right] \epsilon ^{2}\right\}
\end{equation*}%
\begin{equation*}
+o\left( \epsilon ^{2}\right)
\end{equation*}%
and letting $\epsilon $ small enough so that%
\begin{equation*}
1+\epsilon ^{2-\frac{n}{r}}\theta ^{-\frac{n}{r-1}}A\left\Vert a\right\Vert
_{r}+\epsilon ^{4-\frac{n}{s}}\theta ^{-\frac{n}{s-1}}B\left\Vert
b\right\Vert _{s}\leq \left( 1+\left\Vert a\right\Vert _{r}+\left\Vert
b\right\Vert _{s}\right) ^{\frac{4}{n}}
\end{equation*}%
and since the function $\varphi (t)=\alpha \frac{t^{2}}{2}-\frac{t^{N}}{N}$,
with $\alpha >0$ and $t>0$, attains its maximum at $t_{o}=\alpha ^{\frac{1}{%
N-2}}$ and%
\begin{equation*}
\varphi (t_{o})=\frac{2}{n}\alpha ^{\frac{n}{4}}\text{.}
\end{equation*}%
Consequently, we get%
\begin{equation*}
J_{\lambda }\left( tu_{\epsilon }\right) \leq \frac{2\theta ^{-n}}{nK_{\circ
}^{\frac{n}{4}}f(x_{\circ })^{\frac{n-4}{4}}}\left\{ 1+\left\Vert
a\right\Vert _{r}+\left\Vert b\right\Vert _{s}\right.
\end{equation*}%
\begin{equation*}
\left. +\left[ \left( \frac{\Delta f(x_{o})}{2\left( n-2\right) f(x_{o})}+%
\frac{S_{g}\left( x_{o}\right) }{6\left( n-1\right) }\right) \frac{t_{o}^{N}%
}{N}-\frac{1}{2}t_{o}^{2}\frac{n^{2}+4n-20}{6\left( n^{2}-4\right) \left(
n-6\right) }S_{g}\left( x_{o}\right) \right] \epsilon ^{2}\right\}
\end{equation*}%
\begin{equation*}
+o\left( \epsilon ^{2}\right) \text{.}
\end{equation*}%
Taking account of the value of $\theta $ and putting%
\begin{equation*}
R(t)=\left( \frac{\Delta f(x_{o})}{2\left( n-2\right) f(x_{o})}+\frac{%
S_{g}\left( x_{o}\right) }{6\left( n-1\right) }\right) \frac{t^{N}}{N}-\frac{%
1}{2}\frac{n^{2}+4n-20}{6\left( n^{2}-4\right) \left( n-6\right) }%
S_{g}\left( x_{o}\right) t^{2}
\end{equation*}%
we obtain 
\begin{equation*}
\sup_{t\geq 0}J_{\lambda }\left( tu_{\epsilon }\right) <\frac{2}{nK_{\circ
}^{\frac{n}{4}}\left( \max_{x\in M}f(x)\right) ^{\frac{n}{4}-1}}
\end{equation*}%
provided that $R(t_{o})<0$ i.e.%
\begin{equation*}
\frac{\Delta f(x_{o})}{f\left( x_{o}\right) }<\left( \frac{n\left(
n^{2}+4n-20\right) }{3\left( n+2\right) \left( n-4\right) \left( n-6\right) }%
\frac{1}{\left( 1+\left\Vert a\right\Vert _{r}+\left\Vert b\right\Vert
_{s}\right) ^{\frac{4}{n}}}-\frac{n-2}{3\left( n-1\right) }\right)
S_{g}\left( x_{o}\right) \text{.}
\end{equation*}%
Which completes the proof.
\end{proof}

\section{Application to compact Riemannian manifolds of dimension $n=6$}

\begin{theorem}
In case $n=6$, we suppose that at a point $x_{o}$ where $f$ attains its
maximum $S_{g}\left( x_{o}\right) >0$. Then the equation (\ref{1}) has a non
trivial solution.
\end{theorem}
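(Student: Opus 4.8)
The plan is to repeat the proof of Theorem~\ref{thm5}, the only genuine change being the expansion of $\int_M(\Delta u_\epsilon)^{2}\,dv_g$ in the borderline dimension $n=6$. By Theorem~\ref{thm3} it suffices to verify (\ref{C1}), and since by Lemma~\ref{lem} there is $t_\circ>0$ with $t_\circ u_\epsilon\in M_\lambda$ for $\lambda$ small, it is enough to show that, for $\epsilon$ small,
\[ \sup_{t>0}J_\lambda(tu_\epsilon)<\frac{2}{6\,K_\circ^{3/2}(f(x_\circ))^{1/2}}, \]
the right‑hand side being the threshold of (\ref{C1}) when $n=6$. As in Theorem~\ref{thm5}, $\lambda>0$ gives $J_\lambda(tu_\epsilon)\le J_{o}(tu_\epsilon)=\tfrac{t^{2}}{2}\|u_\epsilon\|^{2}-\tfrac{t^{N}}{N}\int_M f|u_\epsilon|^N\,dv_g$, and the maximisation $\sup_{t>0}\big(\tfrac{\beta}{2}t^{2}-\tfrac{\gamma}{N}t^{N}\big)=\tfrac{2}{n}\beta^{n/4}\gamma^{-(n-4)/4}$ reduces the problem to estimating $\|u_\epsilon\|^{2}$ and $\int_M f|u_\epsilon|^N\,dv_g$.

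The expansions that are unchanged from the case $n>6$ are the following: with $\theta=(1+\|a\|_r+\|b\|_s)^{1/n}$ and $K_\circ$ as in (\ref{K}),
\[ \int_M f|u_\epsilon|^N\,dv_g=\frac{\theta^{-6}}{K_\circ^{3/2}(f(x_\circ))^{1/2}}\Big(1-\Big(\tfrac{\Delta f(x_\circ)}{2(n-2)f(x_\circ)}+\tfrac{S_g(x_\circ)}{6(n-2)}\Big)\epsilon^{2}+o(\epsilon^{2})\Big), \]
while the $a$‑ and $b$‑integrals, estimated exactly as in the proof of Theorem~\ref{thm5} (with the constants $A$, $B$ of (\ref{20}), (\ref{21})), contribute to $\|u_\epsilon\|^{2}$ positive terms of orders $\epsilon^{2-n/r}\theta^{-n/(r-1)}A\|a\|_r$ and $\epsilon^{4-n/s}\theta^{-n/(s-1)}B\|b\|_s$, plus $O(\epsilon^{2})$; none of these carries a logarithm, the relevant integrals $I_p^q$ having $p-q>1$. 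The step that must be redone is $\int_M(\Delta u_\epsilon)^{2}\,dv_g$: in the $n>6$ formula the $\epsilon^{2}$‑coefficient carries a factor $\tfrac{1}{n-6}$, so at $n=6$ the divergent integral $\int_0^{(\delta\theta/\epsilon)^{2}}\tfrac{t^{5}}{(1+t)^{6}}\,dt=2\ln(1/\epsilon)+O(1)$ (that is, $I_6^5$, for which $p-q=1$) appears in its place. Combining the volume expansion $1-\tfrac{S_g(x_\circ)}{6n}\rho^{2}+o(\rho^{2})$, the difference $\Delta_g-\Delta$, and the cutoff contribution (which is $O(\epsilon^{2})=o(\epsilon^{2}\ln(1/\epsilon))$), one obtains for $n=6$
\[ \int_M(\Delta u_\epsilon)^{2}\,dv_g=\frac{\theta^{-6}}{K_\circ^{3/2}(f(x_\circ))^{1/2}}\big(1-c\,S_g(x_\circ)\,\epsilon^{2}\ln(1/\epsilon)+o(\epsilon^{2}\ln(1/\epsilon))\big),\qquad c>0 \]
(morally $c$ is the residue at $n=6$ of $\tfrac{n^{2}+4n-20}{6(n^{2}-4)(n-6)}$).

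Assembling these exactly as in Theorem~\ref{thm5}: pick $\epsilon$ small enough that $1+\epsilon^{2-n/r}\theta^{-n/(r-1)}A\|a\|_r+\epsilon^{4-n/s}\theta^{-n/(s-1)}B\|b\|_s\le(1+\|a\|_r+\|b\|_s)^{4/n}=\theta^{4}$, so that $\|u_\epsilon\|^{2}\le\frac{\theta^{-2}}{K_\circ^{3/2}(f(x_\circ))^{1/2}}\big(1-\tilde c\,S_g(x_\circ)\epsilon^{2}\ln(1/\epsilon)+o(\epsilon^{2}\ln(1/\epsilon))\big)$ for some $\tilde c>0$. Inserting this and the expansion of $\int_M f|u_\epsilon|^N\,dv_g$ into $\sup_{t>0}J_{o}(tu_\epsilon)=\tfrac{2}{n}\|u_\epsilon\|^{n/2}\big(\int_M f|u_\epsilon|^N\,dv_g\big)^{-(n-4)/4}$, the powers of $\theta$ cancel and the leftover $\epsilon^{2}$‑terms are $o(\epsilon^{2}\ln(1/\epsilon))$, whence
\[ \sup_{t>0}J_\lambda(tu_\epsilon)\le\frac{2}{6\,K_\circ^{3/2}(f(x_\circ))^{1/2}}\big(1-\hat c\,S_g(x_\circ)\epsilon^{2}\ln(1/\epsilon)+o(\epsilon^{2}\ln(1/\epsilon))\big),\qquad\hat c>0. \]
Since $S_g(x_\circ)>0$, the bracket is $<1$ for $\epsilon$ small, so (\ref{C1}) holds and Theorem~\ref{thm3} produces a non trivial weak solution of (\ref{1}).

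\textbf{Main obstacle.} The only nonroutine point is the logarithmic expansion of $\int_M(\Delta u_\epsilon)^{2}\,dv_g$ at $n=6$: one must verify that the $\tfrac{1}{n-6}$‑singularity of the higher‑dimensional formula resolves into an $\epsilon^{2}\ln(1/\epsilon)$ term whose coefficient is a \emph{negative} multiple of $S_g(x_\circ)$, after correctly combining the metric‑determinant, the $\Delta_g-\Delta$, and the cutoff contributions. Everything else repeats the $n>6$ argument verbatim; the structural reason only the sign of $S_g(x_\circ)$ is required in dimension $6$ is that this logarithmic term dominates both the $\epsilon^{2}$‑term built from $\Delta f(x_\circ)$ and the $a,b$‑contributions, the latter being absorbed through the normalisation $\theta$.
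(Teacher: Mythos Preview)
Your proposal is correct and follows essentially the same route as the paper: recycle the $n>6$ expansions for $\int_M f|u_\epsilon|^N$, $\int_M a|\nabla u_\epsilon|^2$ and $\int_M b\,u_\epsilon^2$, replace the $\epsilon^{2}/(n-6)$ coefficient in $\int_M(\Delta u_\epsilon)^{2}$ by the logarithmic term $-c\,S_g(x_\circ)\epsilon^{2}\log(1/\epsilon)$ with $c>0$, absorb the $a,b$ contributions through the normalisation $\theta$, and conclude via the scalar maximisation $\varphi(t_\circ)=\tfrac{2}{n}\alpha^{n/4}$. The paper records the explicit constant $c=\tfrac{2(n-4)}{n^{2}(n^{2}-4)I_{n}^{n/2-1}}$ (at $n=6$) but otherwise the two arguments coincide.
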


\begin{proof}
The same calculations as in case $n>6$ gives us%
\begin{equation*}
\int_{M}f(x)\left\vert u_{\epsilon }(x)\right\vert ^{N}dv_{g}=\frac{\theta
^{-n}}{K_{\circ }^{\frac{n}{4}}(f(x_{\circ }))^{\frac{n-4}{4}}}\left( 1-(%
\frac{\Delta f(x_{\circ })}{2(n-2)f(x_{\circ })}+\frac{S_{g}(x_{\circ })}{%
6(n-2)})\epsilon ^{2}+o(\epsilon ^{2})\right) \text{.}
\end{equation*}%
Also, we have 
\begin{equation*}
\int_{M}a(x)\left\vert \nabla u_{\epsilon }\right\vert ^{2}dv_{g}\leq \frac{%
\left\Vert a\right\Vert _{r}A}{\text{ }K_{\circ }^{\frac{n}{4}}(f(x_{\circ
}))^{\frac{n-4}{4}}}\epsilon ^{2-\frac{n}{r}\theta ^{-\frac{r}{r-1}}}\left(
1+o\left( \epsilon ^{2}\right) \right)
\end{equation*}%
and 
\begin{equation*}
\int_{M}b(x)u_{\epsilon }^{2}dv_{g}\leq \frac{\left\Vert b\right\Vert _{s}B}{%
K_{\circ }^{\frac{n}{4}}(f(x_{\circ }))^{\frac{n-4}{4}}}\epsilon ^{4-\frac{n%
}{s}}\theta ^{-\frac{s}{s-1}}+\left( 1+o\left( \epsilon ^{2}\right) \right) 
\text{.}
\end{equation*}%
where $A$ and $B$ are given by (\ref{20}) and (\ref{21}) respectively for $%
n=6$. The computations of the term $\int_{M}\left( \Delta u_{\epsilon
}\right) ^{2}dv_{g}$ are well known ( see for example \cite{11})

\begin{equation*}
\int_{M}\left( \Delta u_{\epsilon }\right) ^{2}dv(g)=\theta ^{-n}(n-4)^{2}(%
\frac{(n-4)n(n^{2}-4)}{f(x_{\circ })})^{\frac{n-4}{4}}\frac{\omega _{n-1}}{2}
\end{equation*}%
\begin{equation*}
\times \left( \frac{n(n+2)(n-2)}{(n-4)}I_{n}^{\frac{n}{2}-1}-\frac{2}{n}%
\theta ^{-2}S_{g}(x_{\circ })\epsilon ^{2}\log (\frac{1}{\epsilon ^{2}}%
)+O(\epsilon ^{2})\right) \text{.}
\end{equation*}%
\begin{equation*}
\int_{M}\left( \Delta u_{\epsilon }\right) ^{2}dv_{g}=\frac{\theta ^{-n}}{%
K_{\circ }^{\frac{n}{4}}(f(x_{\circ }))^{\frac{n-4}{4}}}\left( 1-\frac{%
2\left( n-4\right) }{n^{2}(n^{2}-4)I_{n}^{\frac{n}{2}-1}}S_{g}(x_{\circ
})\epsilon ^{2}\log \left( \frac{1}{\epsilon ^{2}}\right) +O(\epsilon
^{2})\right) \text{.}
\end{equation*}%
Now resuming and letting $\epsilon $ so that 
\begin{equation*}
1+\epsilon ^{2-\frac{n}{r}}\theta ^{-\frac{n}{r-1}}A\left\Vert b\right\Vert
_{s}+\epsilon ^{4-\frac{n}{s}}\theta ^{-\frac{n}{s-1}}B\left\Vert
a\right\Vert _{r}\leq \left( 1+\left\Vert a\right\Vert _{r}+\left\Vert
b\right\Vert _{s}\right) ^{\frac{4}{n}}
\end{equation*}%
we get%
\begin{equation*}
J_{\lambda }\left( u_{\epsilon }\right) \leq \frac{1}{2}\left\Vert
u_{\epsilon }\right\Vert ^{2}-\frac{1}{N}\int_{M}f(x)\left\vert u_{\epsilon
}(x)\right\vert ^{N}dv_{g}
\end{equation*}%
\begin{equation*}
\leq \frac{\theta ^{-n}}{\text{ }K_{\circ }^{\frac{n}{4}}(f(x_{\circ }))^{%
\frac{n-4}{4}}}\left[ \frac{t^{2}}{2}\left( 1+\left\Vert a\right\Vert
_{r}+\left\Vert b\right\Vert _{s}\right) ^{1-\frac{4}{n}}-\frac{t^{N}}{N}%
\right.
\end{equation*}%
\begin{equation*}
\left. -\frac{n-4}{n^{2}\left( n^{2}-4\right) I_{n}^{\frac{n}{2}-1}}\theta
^{-2}S_{g}(x_{\circ })t^{2}\epsilon ^{2}\log \left( \frac{1}{\epsilon ^{2}}%
\right) \right] +O(\epsilon ^{2})\text{.}
\end{equation*}%
The same arguments as in the case $n>6$ allow us to infer that 
\begin{equation*}
\max_{t\geq 0}J_{\lambda }\left( tu_{\epsilon }\right) <\frac{2}{n\text{ }%
K_{\circ }^{\frac{n}{4}}(f(x_{\circ }))^{\frac{n-4}{4}}}
\end{equation*}%
if 
\begin{equation*}
S_{g}(x_{\circ })>0\text{.}
\end{equation*}%
Which achieves the proof.
\end{proof}

\bigskip

\end{document}